\newcommand{\ra}[1]{\renewcommand{\arraystretch}{#1}}
\theoremstyle{plain}
\newtheorem{result}[theorem]{Result}
\newcommand{\CC}{\mathbb{C}}
\newcommand{\Z}{\mathbb{Z}}
\newcommand{\F}{\mathbb{F}}
\newcommand{\cW}{\mathcal{W}}
\newcommand{\s}{\sigma}
\newcommand{\la}{\lambda}
\newcommand{\ep}{\epsilon}
\newcommand{\zp}{\zeta_{p}}
\newcommand{\sm}{\setminus}
\newcommand{\es}{\emptyset}
\newcommand{\ol}{\overline}
\newcommand{\lan}{\langle}
\newcommand{\ran}{\rangle}
\newcommand{\wti}{\widetilde}
\newcommand{\wh}{\widehat}
\newcommand{\floor}[1]{{\left\lfloor#1\right\rfloor}}
\newcommand{\ceiling}[1]{{\left\lceil#1\right\rceil}}
\newcommand{\Tr}{{\rm Tr}}
\newcommand{\mr}{\multirow}
\newcommand{\mc}{\multicolumn}
\newcommand\scalemath[2]{\scalebox{#1}{\mbox{\ensuremath{\displaystyle #2}}}}
\long\def\symbolfootnote[#1]#2{\begingroup%
\def\thefootnote{\fnsymbol{footnote}}\footnote[#1]{#2}\endgroup}
\title{Packings of partial difference sets}
\author[1]{Jonathan Jedwab\thanks{supported by an NSERC Discovery Grant.}}
\author[2]{Shuxing Li\thanks{supported by a PIMS Postdoctoral Fellowship.}}
\affil[1,2]{Department of Mathematics, Simon Fraser University, 8888 University Drive, Burnaby BC V5A 1S6, Canada\newline
\email{jed@sfu.ca}, \email{shuxing\_li@sfu.ca}%
}
\begin{document}

\maketitle

\begin{abstract}
A packing of partial difference sets is a collection of disjoint partial difference sets in a finite group~$G$.
This configuration has received considerable attention in design theory, finite geometry, coding theory, and graph theory over many years, although often only implicitly.
We consider packings of certain Latin square type partial difference sets in abelian groups having identical parameters,
the size of the collection being either the maximum possible or one smaller.
We unify and extend numerous previous results in a common framework, recognizing that a particular subgroup reveals important structural information about the packing. Identifying this subgroup allows us to formulate a recursive lifting construction of packings in abelian groups of increasing exponent, as well as a product construction yielding packings in the direct product of the starting groups.
We also study packings of certain negative Latin square type partial difference sets of maximum possible size in abelian groups, all but one of which have identical parameters, and show how to produce such collections using packings of Latin square type partial difference sets.
\keywords{Finite abelian group, packing, partial difference set}
\end{abstract}

\section{Introduction}

A recurring theme across many diverse areas of mathematics is the natural occurrence of structures for which a key aspect can take one of only two values. We mention several examples of this two-valued phenomenon.
In coding theory, a projective two-weight code is a linear code whose codeword weights take one of exactly two distinct values \cite{CK}.
In finite geometry, a projective two-intersection set is a point set in projective space whose intersection with each hyperplane has one of exactly two distinct sizes \cite{CK};
and an $m$-ovoid or a tight set in a polar space is a point set whose intersection with every tangent hyperplane of the polar space has one of exactly two distinct sizes \cite[Chapter 2]{BvM}, \cite[Section 4.5]{MWX}.
In graph theory, a strongly regular graph has exactly two distinct eigenvalues \cite[Chapter 1]{BvM}.
In group theory, a rank $3$ permutation group has exactly two nontrivial orbitals \cite[Section 10]{CK}; and a symmetric Schur ring of dimension~$3$ over a group $G$ is a partition of the nonidentity group elements into exactly two nontrivial subsets which, together with the identity element, form a subalgebra of $\CC[G]$ \cite{Wie}.
In number theory, uniform cyclotomy is a partition of the nonzero elements of a finite field into cyclotomy classes such that exactly two distinct cyclotomic numbers occur~\cite{BMW}.

An underlying connection between many instances of these two-valued phenomena is provided by a partial difference set.
Indeed, let $D$ be a $k$-subset of an additive abelian group $G$ of order~$v$. The subset $D$ is a $(v,k,\la,\mu)$ \emph{partial difference set} in $G$ if the multiset $\{x-y \mid x,y \in D, x \ne y \}$ contains each nonidentity element of $D$ exactly $\la$ times and each nonidentity element of $G \sm D$ exactly $\mu$ times.
Partial difference sets, possibly having some additional properties, give rise to examples of each of the above structures. The construction of partial difference sets is therefore of great interest. We refer to \cite{CK,Ma} for excellent surveys of partial difference sets and equivalent structures, and to \cite{A,ACK,BKLP,BLMX,BDMR,BL,BWX,B,CGT,CM,CTZ,CF13,CP11,CP13,CRX,CHOP,CDMPS,CoM,CoMa,CoMP,CP14,CP17,CP18,CoPe13,D,DX00,DX04,FMX15JCT,FMX15C,FWXY,FX,FKM,HP,H00,H02,H03,HLX,HN,IZZ,LM95,LM96,MS,M14,M18,M21,MO,MX14,MX18,O16,O16DCC,O16JG,OP,P,P08EJC,P09JCD,P09DCC,P10,P19,PDS,P02,P08DCC,RX,SW,TPF,WQWZ} for investigations spanning twenty-five years into which groups $G$ contain a partial difference set with specified parameters $(v,k,\la,\mu)$.

This paper is concerned with the construction of two of the richest classes of partial difference sets, whose parameters are determined by only two integers. A partial difference set whose parameters $(v,k,\la,\mu)$ take the form
$\big(n^2,r(n-1), n+r(r-3),r(r-1)\big)$ has \emph{$(n,r)$ Latin square type},
and one whose parameters take the form
$\big(n^2,r(n+1), -n+r(r+3),r(r+1)\big)$ has \emph{$(n,r)$ negative Latin square type}.
A partial difference set $D$ in $G$ is \emph{regular} if $1_G \notin D$ and $D = \{d^{-1} \mid d \in D\}$.
A great deal of research has been conducted into the existence of a collection of $t > 1$ disjoint regular $(tc,c)$ Latin square type partial difference sets in an abelian group $G$ of order~$t^2c^2$.
The number of elements of $G$ avoided by the partial difference sets of such a collection is $tc$, so the collection has maximum possible size when $c>1$.
The principal motivation for this paper was the discovery of a widespread structural property that does not seem to have been previously recognized: in all previous constructions, possibly after some modification, the $tc$ avoided elements of $G$ form a subgroup~$U$. We therefore refer to such a collection of partial difference sets as a \emph{$(c,t)$ LP-packing in $G$ relative to~$U$}.
The presence of the associated subgroup $U$ is key to the recursive lifting construction of LP-packings of increasing exponent that we present in Section~\ref{sec5}. In general, lifting constructions (that increase the group exponent) have historically been more challenging to find than product constructions (that combine objects into the direct product of the starting groups).
Table~\ref{tab-LPpacking} illustrates how widely the concept of LP-packings has been previously studied, often implicitly, and in each case identifies the subgroup $U$ explicitly.
In some cases, identifying the crucial subgroup $U$ from the original reference requires considerable effort.

We shall unify and extend many results for LP-packings, whose original derivation relied on a variety of sometimes delicate approaches, by means of a common framework that depends only on elementary methods.
We shall show that we have considerable control over the choice of the associated subgroup~$U$.
In addition to the recursive construction of Section~\ref{sec5}, we shall give a product construction for LP-packings in Section~\ref{sec4}.
We present the following result (to be proved as Corollary~\ref{coro-LPpackingeven}) as a showcase for our constructions.

\begin{theorem}\label{thm-preview}
Let $p$ be prime, let $s_1,\dots,s_v$ be nonnegative integers (not all zero), and let $m=\min\{s_i \mid s_i>0\}$. For each $i=1,\dots,v$, let $G_i=\Z_{p^i}^{2s_i}$ and let $U_i$ be a subgroup of $G_i$ of order~$p^{is_i}$.
Let $G = \prod_{i=1}^v G_i$, let $U = \prod_{i=1}^v U_i$, and let $n = \sqrt{|G|}$.
Then for each $j = 0,\dots,m-1$, there exists an
$\big(\frac{n}{p^{m-j}}, p^{m-j}\big)$ LP-packing in $G$ relative to~$U$:
a collection of $p^{m-j}$ disjoint regular $\big(n, \frac{n}{p^{m-j}}\big)$ Latin square type partial difference sets in $G$ avoiding~$U$.
\end{theorem}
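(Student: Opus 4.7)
The plan is to derive the theorem by combining the two main constructions developed earlier in the paper: the product construction of Section~\ref{sec4} and the recursive lifting construction of Section~\ref{sec5}. I first construct suitable LP-packings in each factor $G_i$ separately, and then apply the product construction to combine them into the desired LP-packing in $G$ relative to~$U$.

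Fix $j \in \{0, \dots, m-1\}$ and set $t = p^{m-j}$. For each $i$ with $s_i > 0$, the intermediate goal is to produce an $(n_i/t, t)$ LP-packing in $G_i = \Z_{p^i}^{2s_i}$ relative to $U_i$, where $n_i = p^{is_i}$. In the base case $i=1$, the group $G_i = \Z_p^{2s_i}$ is an elementary abelian $p$-group and $U_i$ is an $s_i$-dimensional subspace; since $t = p^{m-j} \le p^m \le p^{s_i}$, classical constructions (for example, suitable unions of components of a Desarguesian spread complementary to $U_i$, or subfield constructions inside $\F_{p^{2s_i}}$) yield an $(n_i/t, t)$ LP-packing relative to $U_i$. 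For $i \ge 2$, I apply the lifting construction of Section~\ref{sec5} iteratively $(i-1)$ times to the base LP-packing, producing at each stage an LP-packing in $\Z_{p^k}^{2s_i}$ (for $k = 2, \dots, i$) relative to the corresponding canonical subgroup, ultimately giving the required LP-packing in $\Z_{p^i}^{2s_i}$ relative to $U_i$.

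With these factor LP-packings in hand, I apply the product construction of Section~\ref{sec4} to their $v$-fold product (indices $i$ with $s_i = 0$ are trivial and can simply be omitted). Because every factor LP-packing shares the same size $t = p^{m-j}$ — which is exactly the compatibility requirement of the product construction — the result is an $(n/t, t)$ LP-packing in $G = \prod_i G_i$ relative to $U = \prod_i U_i$. This is the conclusion of Theorem~\ref{thm-preview}.

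The main obstacle I anticipate is verifying that the lifting construction of Section~\ref{sec5} can be iterated through all the exponents $p, p^2, \dots, p^i$ while tracking the correct relative subgroup at each stage, so that the lifted LP-packing is relative to the specified $U_i$ rather than merely to some subgroup of the correct order. Provided Section~\ref{sec5} is formulated to propagate the distinguished subgroup through the lifting step, the remainder of the argument is largely bookkeeping. A secondary point to monitor is the uniform inequality $m - j \le m \le s_i$ across all $i$ with $s_i > 0$, which is precisely what guarantees that the common packing size $t = p^{m-j}$ is simultaneously attainable in every nontrivial factor, making the product construction applicable.
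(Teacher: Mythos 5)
Your proposal is correct and follows essentially the same route as the paper's proof (given as Corollary~\ref{coro-LPpackingeven}): factor-wise LP-packings obtained from the recursive lifting of Theorem~\ref{thm-LPpackinguniform} (which is stated relative to an \emph{arbitrary} subgroup of order $p^{is_i}$, resolving the obstacle you anticipated about propagating the distinguished subgroup), then combined via the product construction. The only cosmetic difference is that you coarsen each factor to the common size $p^{m-j}$ via Lemma~\ref{lem-LPpackingunion} before taking products, whereas the paper takes products first (using Corollary~\ref{coro-LPpackingproduct}, which absorbs the mismatch in the $t_i$ through divisibility) and applies Lemma~\ref{lem-LPpackingunion} once at the end.
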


We shall show in Section~\ref{sec3} that a collection of $t$ disjoint regular $(tc,c)$ Latin square type partial difference sets with $c=1$ does not have maximum possible size, because there exists a size $t+1$ collection known equivalently as a $(t,t+1)$ partial congruence partition. However, we regard the $(1,t)$ LP-packing as a more natural object than the larger partial congruence partition, because it forms the base case of the recursive construction of Section~\ref{sec5} and so allows a powerful generalization to nonelementary abelian groups.

We similarly refer to a collection of $t-1 > 0$ disjoint regular $(tc,c)$ negative Latin square type partial difference sets in an abelian group $G$ of order~$t^2c^2$,
for which the $(c-1)(tc+1)$ nonidentity avoided elements of $G$ form a regular $(tc,c-1)$ negative Latin square type partial difference set in $G$, as a \emph{$(c,t-1)$ NLP-packing}; such a collection has maximum possible size for all $c \ge 1$.
The NLP-packing structure was already known, and previous constructions are summarized in Table~\ref{tab-NLPpacking}.
We shall use a product construction in Section~\ref{sec6} to combine small examples of NLP-packings with the families of LP-packings constructed recursively here, and so extend previous constructions of families of NLP-packings.
We mention that both LP-packings and NLP-packings give rise to group-based amorphic association schemes~\cite{vM}.

Our principal objective is to determine for which abelian groups $G$ and subgroups $U$ there exists a $(c,t)$ LP-packing in $G$ relative to $U$, and for which abelian groups $G$ there exists a $(c,t-1)$ NLP-packing in~$G$. We propose that the natural framework for studying regular $(tc,c)$ Latin square type and negative Latin square type partial difference sets is as these respective collections, rather than as single examples.

The rest of the paper is organized as follows.
Section~\ref{sec2} gives a historical overview of packings of partial difference sets of Latin square type and negative Latin square type in abelian groups.
Section~\ref{sec3} introduces LP-packings, describes their relationship to partial congruence partitions, establishes constraints on their structure, and presents a product construction.
Section~\ref{sec4} introduces an LP-partition as an auxiliary configuration in the construction of an LP-packing in a larger group from an LP-packing in a smaller group.
Section~\ref{sec5} recursively constructs infinite families of LP-partitions and LP-packings of increasing exponent, inspired by a recursive lifting construction of difference sets using relative difference sets~\cite{DJ}.
Section~\ref{sec6} introduces NLP-packings, and uses a product construction to combine an NLP-packing with an LP-packing to give an NLP-packing in the direct product of the starting groups.
Section~\ref{sec7} examines how LP-packings and NLP-packings are related to hyperbolic and elliptic strongly regular bent functions, and
Section~\ref{sec8} examines how these packings are related to reversible Hadamard difference sets.
Section~\ref{sec9} proposes some open problems for future research.

\begin{table}[ht]
\ra{1.3}
\caption{Known $(c,t)$ LP-packings in $G$ relative to $U$, where $p$ is prime and $\s$ is defined in~\eqref{eqn-s}}
\label{tab-LPpacking}
\begin{center}
\scalebox{0.86}{
\begin{tabular}{|c|c|c|c|c|c|}
\hline
$c$				& $t$ 			& Group $G$ 					& Subgroup $U$ 					& Restrictions 				& Source 				\\ \hline
\mr{2}{*}{$p^{w-s}$}		& \mr{2}{*}{$p^s$} 	& \mr{2}{*}{$\Z_{p^2}^w$} 			& \mr{2}{*}{$(p\Z_{p^2})^w$}			& $s$ is proper				& \mr{2}{*}{\cite[Thm.~3.1]{CRX}} 	\\
				&			&						&						& divisor of $w>1$			& 					\\ \hline
$p$				& $p$ 			& $\Z_{p^2}^2$ 					& $(p\Z_{p^2})^2$ 				&  					& \cite[Thm.~3.1]{D} 			\\ \hline
$2^{a-1}$			& $2$			& $\Z_{2^a}^2$					& order $2^a$, noncyclic			& 					& \cite[Thm.~3.1]{DP}	 		\\ \hline
$p^{w-\s(p,w)}$			& $p^{\s(p,w)}$		& $\Z_{p^2}^w$ 					& $(p\Z_{p^2})^w$ 				& $w > 1$ 				& \cite[Lem.~3.1]{HLX} 			\\ \hline
$\frac{n}{p^s}$ 		& $p^s$			& $\Z_{p^{a+1}}^{2us} \times \Z_{p^{a}}^{2ws}$	& $\Z_{p^{a+1}}^{us} \times \Z_{p^{a}}^{ws}$	& $n=\sqrt{|G|}$ and $a \ge 1$		& \cite[Thm.~3.1]{LM90}			\\ \hline
$p^{w-1}$			& $p$ 			& $\Z_{p^2}^w$ 					& $(p\Z_{p^2})^w$				& $w > 1$ 				& \cite[Thm.~2.2]{LM96} 		\\ \hline
$p^s$				& $p^s$ 		& $\Z_{p^2}^{2s}$ 				& $(p\Z_{p^2})^{2s}$ 				& $s > 1$  				& \cite[Prop.~4.1]{P02} 		\\ \hline
\mr{2}{*}{$p^{(2a-1)s}$}	& \mr{2}{*}{$p^s$}	& \mr{2}{*}{$\Z_{p^{2a}}^{2s}$} 		& \mr{2}{*}{$(p^a\Z_{p^{2a}})^{2s}$}		& 					& \cite[Lem.~4.3]{P06} 			\\
           			&			& 						& 						& 					& \cite[Thm.~3.1]{P08DCC} 		\\ \hline
\mr{2}{*}{$\frac{n}{p}$}	& \mr{2}{*}{$p$}	& \mr{2}{*}{$\Z_p^{2u} \times
							   \prod_{i=1}^v\Z_{p^{2i}}^{2s_{2i}}$}		& \mr{2}{*}{$\Z_p^u \times
													   \prod_{i=1}^v (p^i\Z_{p^{2i}})^{2s_{2i}}$}	& $n=\sqrt{|G|}$ and 			& \mr{2}{*}{\cite[Cor.~6.2]{P08DCC}}	\\
				&			&						&						& $u, s_{2i} \ge 0$			&					\\ \hline
$p^{w-1}$			& $p$ 			& $\Z_{p^2}^w$					& $(p\Z_{p^2})^w$				& $p$ is odd divisor of $w$		& \cite[Thm.~4.1]{RX}			\\ \hline
$2^{\ceiling{w/2}}$		& $2^{\floor{w/2}}$	& $\Z_{4}^w$ 					& $(2\Z_{4})^w$ 				& $w > 1$ 				& \cite[Lem.~4.5]{XD} 			\\ \hline
\end{tabular}
}
\end{center}
\end{table}

\begin{table}[ht]
\ra{1.3}
\caption{Known $(c,t-1)$ NLP-packings in $G$}
\label{tab-NLPpacking}
\begin{center}
\scalebox{0.84}{
\begin{tabular}{|c|c|c|c|c|}
\hline
$c$			& $t-1$ 		& Group $G$ 										& Restrictions 				& Source 				\\ \hline
\mr{3}{*}{$\frac{n}{2}$}& \mr{3}{*}{$1$} 	& \mr{3}{*}{$\Z_2^{2u} \times \Z_4^w \times \prod_{i=3}^v \Z_{2^i}^{2s_i} \times
						  \Z_3^{2y} \times \prod_{i=1}^{z} \Z_{p_i}^{4t_i}$} 					& $n=\!\sqrt{|G|}$ and prime $p_i > 3$,	& \mr{3}{*}{\cite[Prop.~3.12]{CP13}} 	\\				
			&			& 											& $u,w,s_i,y,t_i \ge 0$, 		& 					\\
			&			&											& if $u=0$ then $y=t_i=0$ 		&					\\ \hline
\mr{2}{*}{$\frac{n}{3}$}& \mr{2}{*}{$2$} 	& \mr{2}{*}{$\Z_3^{2+2u} \times \Z_9^w \times \prod_{i=3}^v \Z_{3^i}^{2s_i}$}		& $n=\sqrt{|G|}$ and $u,w,s_i \ge 0$,	& \mr{2}{*}{\cite[Prop.~3.13]{CP13}} 	\\
			& 			&      		    									& $w \ne 1$   			&						\\ \hline
\mr{2}{*}{$\frac{n}{4}$}& \mr{2}{*}{$3$} 	& $\Z_2^{4+2u} \times \Z_4^w \times \prod_{i=3}^v \Z_{2^i}^{4s_i}$			& $n=\sqrt{|G|}$ and $u,w,s_i \ge 0$,	& \mr{2}{*}{\cite[Prop.~3.14]{CP13}} 	\\ \cline{3-3}
			&			& $\Z_2^{2u} \times \Z_4^{w+2} \times \prod_{i=3}^v \Z_{2^i}^{4s_i}$			& $u \ne 1$ and $w \notin \{1,3\}$	& 					\\ \hline
\mr{2}{*}{$4^{w-1}$}	& \mr{2}{*}{$3$} 	& \mr{2}{*}{$\Z_{4}^{2u}\times \Z_2^{4w-4u}$} 						& $0 \le u \le w$ if $w$ odd 		& \cite[Cor.~2.2]{DX04} 		\\
			&			&											& $0 \le u < w$ if $w$ even		& \cite[Thm.~3.6]{DX06} 		\\ \hline
\mr{2}{*}{$\frac{n}{4}$}& \mr{2}{*}{$3$} 	& \mr{2}{*}{$\Z_2^{4u} \times \Z_4^{2w} \times \prod_{i=2}^v \Z_{2^{2i}}^{4s_{2i}}$} 	& $n=\sqrt{|G|}$ and $u,w,s_{2i} \ge 0$,& \mr{2}{*}{\cite[Thm.~4.2]{P08DCC}} 	\\
			&			& 											& $u+w \ge 1$   			&  					\\ \hline
$\frac{n}{3}$		& $2$			& $\Z_3^{2u+2} \times \prod_{i=1}^v \Z_{3^{2i}}^{2s_{2i}}$				& $n=\sqrt{|G|}$ and $u, s_{2i} \ge 0$ 	& \cite[Cor.~5.1]{P08DCC} 		\\ \hline
$3^{w-1}$		& $2$ 			& $\Z_{3}^{2w}$ 									& $w \ge 1$ 				& \cite[Thm.~1]{TPF} 			\\ \hline
\end{tabular}
}
\end{center}
\end{table}

\section{Historical Overview}\label{sec2}

In this section, we give an overview of previous constructions of regular Latin square type and negative Latin square type partial difference sets in abelian groups. In many cases, these results produce only a single partial difference set, whereas our objective in later sections will be to construct disjoint collections.

Let $G$ be an abelian group.
We firstly present some basic results involving the group ring $\Z[G]$ and character theory.
For $A \in \Z[G]$ and $g \in G$, denote the coefficient of $g$ in $A$ by~$[A]_g$ (so that $A = \sum_{g \in G} [A]_g g$).
For $A \in \Z[G]$, write $A^{(-1)}=\sum_{g \in G} [A]_gg^{-1}$.
For a subset $A$ of~$G$, by a standard abuse of notation we also denote the group ring element $\sum_{g \in A}g$ by $A$.
A \emph{character} $\chi$ of $G$ is a group homomorphism from $G$ to the multiplicative group of the complex field~$\mathbb{C}$. Write $\wh{G}$ for the character group of~$G$.
A character $\chi \in \wh{G}$ is \emph{principal} on a subgroup $H \leqslant G$ if $\chi(h)=1$ for each $h \in H$. For a subgroup $H \leqslant G$, write \[
H^{\perp}=\{ \chi \in \wh{G} \mid \mbox{$\chi$ is principal on $H$} \}
\]
(so that $G^\perp$ consists of only the principal character on~$G$).
For $\chi \in \wh{G}$ and $A \in \Z[G]$, write $\chi(A)$ for the character sum $\sum_{g \in G} [A]_g\chi(g)$. For a more detailed treatment of group rings and character theory, see \cite[Chap.~VI]{BJL} and \cite[Chap.~1]{Pott}.

\begin{definition}\label{def-pds}
Let $D$ be a $k$-subset of an additive group $G$ of order~$v$. The subset $D$ is a $(v,k,\la,\mu)$ \emph{partial difference set} (PDS) in $G$ if the multiset $\{x-y \mid x,y \in D, x \ne y \}$ contains each nonidentity element of $D$ exactly $\la$ times and each nonidentity element of $G-D$ exactly $\mu$ times.
The partial difference set is \emph{regular} if $1_G \notin D$ and $D=D^{(-1)}$.
\end{definition}
\noindent
The condition $D=D^{(-1)}$ in Definition~\ref{def-pds} is guaranteed to hold except in the special case $\la = \mu$ \cite[Prop.~1.2]{Ma}, in which case the partial difference set is a \emph{$(v,k,\la)$-difference set} in~$G$ (see Section~\ref{sec8}).
Provided that $D=D^{(-1)}$, the condition $1_G \notin D$ is not restrictive \cite[p.~222]{Ma}; we shall be concerned only with partial difference sets that are regular.
Let $D$ be a $k$-subset of a group $G$ of order $v$ for which $1_G \notin D$.
Then, in group ring notation, $D$ is a $(v,k,\la,\mu)$ PDS in $G$ if and only if
\begin{equation}\label{eqn-PDSgpring}
DD^{(-1)} = k-\mu + (\la-\mu) D + \mu G \quad \mbox{in $\Z[G]$}.
\end{equation}

The following result is a consequence of the orthogonality properties of characters.

\begin{proposition}[Fourier inversion formula] \label{prop-fourier}
Let $G$ be an abelian group and let $A \in \Z[G]$. Then
\[
[A]_g=\frac{1}{|G|}\sum_{\chi \in \wh{G}} \chi(A)\ol{\chi(g)} \quad \mbox{for each $g \in G$}.
\]
In particular, elements $A, B$ of $\Z[G]$ are equal if and only if $\chi(A)=\chi(B)$ for all $\chi \in \wh{G}$.
\end{proposition}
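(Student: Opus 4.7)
The plan is to deduce the inversion formula directly from the orthogonality relations of characters of a finite abelian group. The key fact I would invoke (or establish from scratch if needed) is that for any $g,h \in G$,
\[
\sum_{\chi \in \wh{G}} \chi(h)\ol{\chi(g)} = \begin{cases} |G| & \text{if } h=g, \\ 0 & \text{otherwise,}\end{cases}
\]
which follows from the usual column orthogonality of the character table of a finite abelian group. Given this, the main computation is routine.

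First I would expand $\chi(A) = \sum_{h \in G}[A]_h\chi(h)$ inside the target sum and then interchange the two finite sums:
\[
\frac{1}{|G|}\sum_{\chi \in \wh{G}} \chi(A)\ol{\chi(g)}
= \frac{1}{|G|}\sum_{h \in G}[A]_h \sum_{\chi \in \wh{G}}\chi(h)\ol{\chi(g)}.
\]
By the orthogonality relation above, the inner sum equals $|G|$ when $h=g$ and vanishes otherwise, so only the term $h=g$ survives and the right-hand side collapses to $[A]_g$. This establishes the first assertion.

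For the ``in particular'' clause, one direction is immediate: if $A=B$ in $\Z[G]$, then certainly $\chi(A)=\chi(B)$ for every $\chi \in \wh{G}$. Conversely, if $\chi(A)=\chi(B)$ for all $\chi \in \wh{G}$, applying the inversion formula just proved to both $A$ and $B$ yields $[A]_g = [B]_g$ for every $g \in G$, hence $A=B$.

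I do not anticipate a genuine obstacle here; the only mild subtlety is whether the orthogonality relation is assumed as background or needs justification. If it does, I would prove it by noting that for a fixed $g \ne 1_G$ there exists $\psi \in \wh{G}$ with $\psi(g) \ne 1$ (since $\wh{G} \cong G$), and then the sum $S = \sum_{\chi}\chi(g)$ satisfies $\psi(g) S = S$ by reindexing $\chi \mapsto \psi\chi$, forcing $S=0$; the general case follows by setting $h g^{-1}$ in place of $g$.
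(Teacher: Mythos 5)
Your proof is correct and uses exactly the route the paper indicates: the paper states the proposition as "a consequence of the orthogonality properties of characters" without further detail, and your expansion-plus-orthogonality computation (including the standard reindexing argument for the orthogonality relation itself) is precisely that argument carried out in full.
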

\noindent
For a regular PDS $D$, we can use the relation $D=D^{(-1)}$ to rewrite \eqref{eqn-PDSgpring} as $D^2 = k-\mu + (\la-\mu) D + \mu G$ in~$\Z[G]$. Applying a nonprincipal character $\chi$ to both sides then shows that the character sum $\chi(D)$ satisfies the quadratic equation $\chi(D)^2-(\la-\mu)\chi(D)-(k-\mu)=0$. Using Proposition~\ref{prop-fourier}, we therefore obtain the following characterization of a regular partial difference set in terms of its character sums.

\begin{lemma}[{\cite[Cor.~3.3]{Ma}}]\label{lem-pdschar}
Let $G$ be an abelian group of order $v$. Let $D$ be a $k$-subset of~$G$ for which $1_G \notin D$.
Let $\la,\mu$ be nonnegative integers satisfying
$k^2=k-\mu+(\la-\mu)k+\mu v$ and $(\la-\mu)^2+4(k-\mu) \ge 0$.
Then $D$ is a regular $(v,k,\la,\mu)$ partial difference set in $G$ if and only if
\[
\chi(D) = \tfrac{1}{2}\Big(\la-\mu \pm \sqrt{(\la-\mu)^2+4(k-\mu)}\Big)
\quad \mbox{for all nonprincipal characters $\chi$ of $G$}.
\]
\end{lemma}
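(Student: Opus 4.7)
The plan is to pivot on the group ring identity $DD^{(-1)} = k-\mu + (\la-\mu)D + \mu G$ from \eqref{eqn-PDSgpring}, which together with the regularity condition $D = D^{(-1)}$ collapses to $D^2 = k-\mu + (\la-\mu)D + \mu G$ in $\Z[G]$. Applying characters then reduces the PDS property to a quadratic equation on $\chi(D)$ whose two roots are exactly the values displayed in the statement. Proposition~\ref{prop-fourier} supplies the converse, because a group ring element is determined by its character values.

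For the forward direction, I would assume $D$ is a regular $(v,k,\la,\mu)$ PDS, use $D = D^{(-1)}$ to rewrite \eqref{eqn-PDSgpring} as $D^2 = k-\mu + (\la-\mu)D + \mu G$, and then apply an arbitrary nonprincipal character $\chi$. Since $\chi(G) = 0$, this yields $\chi(D)^2 - (\la-\mu)\chi(D) - (k-\mu) = 0$, whose roots are real by the hypothesis $(\la-\mu)^2 + 4(k-\mu) \ge 0$ and coincide with the claimed values. For the converse, I would first recover the regularity condition $D = D^{(-1)}$: because $\chi(D^{(-1)}) = \ol{\chi(D)}$ for every character $\chi$, and because $\chi(D)$ is real for every $\chi \in \wh{G}$ (both for nonprincipal $\chi$ by hypothesis and for the principal character, where $\chi(D) = k$), we have $\chi(D^{(-1)}) = \chi(D)$ for all $\chi$, so Proposition~\ref{prop-fourier} gives $D = D^{(-1)}$. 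Next I would verify $D^2 = k-\mu + (\la-\mu)D + \mu G$ in $\Z[G]$ by checking character values on both sides: the principal character matches by the hypothesis $k^2 = k-\mu + (\la-\mu)k + \mu v$, while for a nonprincipal $\chi$ both sides are forced to agree because $\chi(D)$ was assumed to satisfy the quadratic $\chi(D)^2 - (\la-\mu)\chi(D) - (k-\mu) = 0$. Combining this identity with $D = D^{(-1)}$ recovers \eqref{eqn-PDSgpring}, so $D$ is a regular PDS.

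I do not anticipate any genuine obstacle, as this is an elementary Fourier-theoretic reformulation. The only delicate point is not to skip the step recovering $D = D^{(-1)}$ in the converse: the character identity alone pins down $D^2$, whereas the definition of a PDS demands an identity for $DD^{(-1)}$, and these coincide only after the symmetry of $D$ is established. The hypothesis that $(\la-\mu)^2 + 4(k-\mu) \ge 0$ is exactly what guarantees reality of the character sums and thereby enables that step.
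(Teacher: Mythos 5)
Your proposal is correct and follows essentially the same route the paper takes (it sketches exactly this argument — rewrite \eqref{eqn-PDSgpring} via $D=D^{(-1)}$, apply a nonprincipal character to get the quadratic, and invoke Proposition~\ref{prop-fourier} — before deferring to the cited reference). Your explicit recovery of $D=D^{(-1)}$ in the converse from the reality of all character sums is the one detail the paper leaves implicit, and you handle it correctly.
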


We are interested in the following two types of parameters for partial difference sets.

\begin{definition}\label{def-LatinPDS}
Let $n \ge 1$ and $r \ge 0$ be integers.
An $(n^2,r(n-\ep),\ep n+r^2-3\ep r,r^2-\ep r)$ PDS
is an \emph{$(n,r)$ Latin square type PDS} if $\ep=1$, and is an \emph{$(n,r)$ negative Latin square type PDS} if $\ep=-1$.
\end{definition}
\noindent
The connection between Latin square type partial difference sets and Latin squares is ``rather indirect'' \cite[p.~2]{HN}.
We allow the case $r=0$ in Definition~\ref{def-LatinPDS}, which corresponds to the empty set (see Remark~\ref{rem-NLPpacking}~$(iii)$).

We next give a character-theoretic description of the two parameter sets of interest.

\begin{lemma}\label{lem-LatinPDS}
Let $n \ge 1$ and $r \ge 0$ be integers, and let $\ep \in \{1,-1\}$.
Let $G$ be an abelian group of order $n^2$, and let $D$ be an $r(n-\ep)$-subset of $G$. Then $D$ is a regular $(n,r)$ Latin square type PDS in $G$ (when $\ep=1$), and is a regular $(n,r)$ negative Latin square type PDS in $G$ (when $\ep=-1$), if and only if
\[
\chi(D) \in \{ -\ep r, \ep (n-r) \} \quad \mbox{for all nonprincipal characters $\chi$ of $G$}.
\]
If $D$ is a regular $(n,r)$ Latin square type or negative Latin square type PDS in $G$, then the character set $\wti{D}=\{ \chi \in \wh{G} \mid \chi(D)=\ep(n-r) \}$ is also a regular $(n,r)$ Latin square type or negative Latin square type PDS in~$\wh{G}$, respectively.
\end{lemma}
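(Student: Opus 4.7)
The plan is to derive both assertions from Lemma~\ref{lem-pdschar} together with the Fourier inversion formula (Proposition~\ref{prop-fourier}). The first (``if and only if'') statement is a direct application; the duality claim for $\wti{D}$ reduces to computing its character sums on $\wh{G}$ and reapplying the first part to the dual group.

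For the character condition, I would substitute the explicit parameters of Definition~\ref{def-LatinPDS}. Writing $k=r(n-\ep)$, $\la=\ep n+r^2-3\ep r$, and $\mu=r^2-\ep r$, a direct computation gives $\la-\mu=\ep(n-2r)$ and $k-\mu=r(n-r)$, so that
\[
(\la-\mu)^2+4(k-\mu)=(n-2r)^2+4r(n-r)=n^2.
\]
A routine expansion also confirms the identity $k^2=k-\mu+(\la-\mu)k+\mu n^2$ needed to invoke Lemma~\ref{lem-pdschar}, which then asserts that $D$ is the claimed regular PDS if and only if
\[
\chi(D)=\tfrac{1}{2}\bigl(\ep(n-2r)\pm n\bigr)\in\{\ep(n-r),\,-\ep r\}
\]
for every nonprincipal character $\chi\in\wh{G}$, which is precisely the stated character condition.

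For the duality claim, I would show directly that $\wti{D}$ has cardinality $r(n-\ep)$ and that its character sums on $\wh{G}$ satisfy the condition just established, now applied with $G$ replaced by $\wh{G}$. Let $\psi_g\in\widehat{\wh{G}}$ denote the character $\chi\mapsto\chi(g)$ for $g\in G$. Since $D=D^{(-1)}$, a standard consequence of Proposition~\ref{prop-fourier} gives $\sum_{\chi\in\wh{G}}\chi(D)\chi(g)=|G|\cdot[D]_g$. Splitting the left-hand side according to the three possible values $k,\ep(n-r),-\ep r$ of $\chi(D)$, using the orthogonality identity $\sum_\chi\chi(g)=0$ for $g\ne1_G$, and simplifying via the identity $k+\ep r=rn$ (which follows from $k=r(n-\ep)$), I expect to extract
\[
\psi_g(\wti{D})=\ep\bigl(n[D]_g-r\bigr)\in\{\ep(n-r),\,-\ep r\}
\]
for each $g\ne1_G$. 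A parallel count using $\sum_{\chi\ne1_{\wh{G}}}\chi(D)=-k$ yields $|\wti{D}|=r(n-\ep)$, and the reality of each $\chi(D)$ (again from $D=D^{(-1)}$) forces $\wti{D}=\wti{D}^{(-1)}$ in $\wh{G}$. The first part of the lemma, applied to the abelian group $\wh{G}$ of order $n^2$, then identifies $\wti{D}$ as a regular $(n,r)$ Latin or negative Latin square type PDS in $\wh{G}$.

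The main obstacle I foresee is purely algebraic bookkeeping: the principal-character contribution must combine with the two split sums to collapse into a single clean term $\ep n\,\psi_g(\wti{D})$, and this collapse depends precisely on $k+\ep r=rn$. Once this cancellation is isolated, the remaining algebra is mechanical, and the counting formulas for $|\wti{D}|$ and for $\wti{D}=\wti{D}^{(-1)}$ emerge from the very same character-sum identities.
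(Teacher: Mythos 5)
Your proof of the first statement coincides with the paper's: both apply Lemma~\ref{lem-pdschar} to the parameters of Definition~\ref{def-LatinPDS}, and your computations $\la-\mu=\ep(n-2r)$, $k-\mu=r(n-r)$ and $(\la-\mu)^2+4(k-\mu)=n^2$ are exactly what make that application go through. For the second statement you take a genuinely different route: the paper simply invokes \cite[Thm.~3.4]{Ma} and identifies $\wti{D}$ with the dual $D^+$ when $\ep=1$ and with $\wh{G}-1_{\wh{G}}-D^+$ when $\ep=-1$, whereas you reprove the needed special case of that duality from scratch. Your computation is correct: splitting $\sum_{\chi}\chi(D)\chi(g)=n^2[D]_g$ according to the values $k$, $\ep(n-r)$, $-\ep r$ and using $k+\ep r=rn$ does give $\psi_g(\wti{D})=\ep(n[D]_g-r)\in\{-\ep r,\ep(n-r)\}$ for $g\ne 1_G$, and the companion count gives $|\wti{D}|=r(n-\ep)$, after which the first part applied to $\wh{G}$ finishes the job. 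What your approach buys is a self-contained argument that treats both signs of $\ep$ uniformly and makes the dual PDS explicit; what it costs is the bookkeeping you yourself flag, which the paper avoids by outsourcing to Ma. One caveat applies equally to both arguments and is really a defect of the statement rather than of your proof: when $r=\ep=1$ one has $k=\ep(n-r)$, so the principal character lies in $\wti{D}$ as literally defined ($\wti{D}$ is then the subgroup $U^\perp$), and your separation of the principal term from the $\ep(n-r)$-valued characters, as well as the regularity of $\wti{D}$, silently excludes this degenerate case.
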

\begin{proof}
The first statement follows directly from Lemma~\ref{lem-pdschar}.
The second statement is obtained from \cite[Thm.~3.4]{Ma}: let $D^+$ be the dual of $D$ as defined in \cite{Ma}, and note that $\wti{D} = D^+$ in the case $\ep=1$ and that $\wti{D} = \wh{G}-1_{\wh{G}}-D^+$ in the case $\ep=-1$.
\end{proof}

We now briefly survey the numerous previous constructions of Latin square type and negative Latin square type partial difference sets, which we classify into three classes that are restricted to elementary abelian groups and five that are not.
The three construction classes that are restricted to elementary abelian groups arise from the following sources.

\begin{enumerate}[(1)]
\item \emph{Projective two-intersection sets}.

Projective two-intersection sets (sometimes called two-character sets) are classical configurations from finite geometry that provide a series of constructions \cite{A,ACK,BL,CHOP,CDMPS,CoM,CoMa,CoMP,CoPe13,HP,IZZ,P} (see also \cite[Sect.~9]{Ma}).
Recently, projective two-intersection sets have been intensively studied in the context of intriguing sets in polar spaces (see \cite{BKLP,BLMX,BDMR,B,CP14,CP17,CP18,FMX15JCT}, for example). See \cite{C} for a recent survey of intriguing sets, and \cite[Sect.~4.5]{MWX} for a detailed account of the connection between these sets and Latin square type and negative Latin square type PDSs.

\item \emph{Nondegenerate quadratic forms and their generalizations}.

A fundamental construction employs nondegenerate quadratic forms over finite fields \cite[Thm.~2.6]{Ma}. This construction has been greatly extended to allow the quadratic form to be replaced by other functions, including bent and vectorial bent functions \cite{CGT,CM,CTZ,CP13,FWXY,OP,TPF}.

\item \emph{Cyclotomic classes of a finite field}.

A seminal construction due to Baumert, Mills and Ward involves uniform cyclotomy in a finite field \cite{BMW}, \cite[Thm.~10.3]{Ma}. Further cyclotomic constructions have been proposed, based on sophisticated number-theoretic techniques \cite{M14,M18,MO,MX14,MX18,O16,O16JG}.
\end{enumerate}

We now describe the five construction classes that are not restricted to elementary abelian groups.
We shall later use LP-packings and NLP-packings to streamline many of these constructions, using only elementary methods. In some cases, we shall give a unified treatment for previously known parameter sets; in other cases, we shall produce examples with entirely new parameter sets.

\begin{enumerate}[(1)]
\item \emph{Direct constructions via partial congruence partitions}

\begin{definition}\label{def-PCP}
Let $G$ be a group of order~$n^2>1$. An \emph{$(n,r)$ partial congruence partition of degree~$r$ in $G$} is a collection $\{U_1,\dots,U_r\}$ of order $n$ subgroups of $G$ such that $U_i \cap U_j=\{1_G\}$ for all $i \ne j$.
\end{definition}

If $U_1, \dots, U_r$ is an $(n,r)$ partial congruence partition in $G$, then $\sum_{i=1}^r (U_i-1_G)$ is a regular $(n,r)$ Latin square type partial difference set in $G$ \cite[Sect.~4.1]{BJ}.
A central objective in the study of $(n,r)$ partial congruence partitions in groups of order~$n^2$ is to determine the largest possible degree~$r$ \cite{BJ,J89a,J89b}. The following result determines this value for abelian $p$-groups of the form~$H \times H$.

\begin{result}[{\cite[Thm.~2.7]{BJ}}]\label{res-PCPPDS}
Let $s_1,\dots,s_v$ be nonnegative integers and let $m = \min \{s_i \mid s_i > 0\}$. Let $p$ be prime and let $G=\prod_{i=1}^v\Z_{p^i}^{2s_i}$ and let $n = \sqrt{|G|}$.
Then the largest integer~$r$ for which there exists an $(n,r)$ partial congruence partition in $G$ is~$p^m+1$.
Therefore there exists a regular $(n,r)$ Latin square type PDS in $G$ for each positive integer $r \le p^m+1$.
\end{result}

We shall strengthen Result~\ref{res-PCPPDS} using Corollary~\ref{coro-LPpackingeven}: see Remark~\ref{rem-PDSeven}~$(i)$.

\item \emph{Direct constructions via finite local rings}

For positive integers $p$ and $w$, define the function
\begin{equation}\label{eqn-s}
\s(p,w)=\max_{\ell \mid w} \ell \left(\floor{\frac{w/\ell-2}{p}}+1\right),
\end{equation}
which determines the parameters of Latin square type partial difference sets in $\Z_{p^2}^w$ under the construction of \cite[Corollary 3.4]{HLX}. Various direct constructions of Latin square type partial difference sets exploit the structure of finite local rings \cite{CRX,DX04,H02,H03,HLX,HN,LM90,LM96,P02,RX}. We summarize below the constructions from \cite{CRX,H02,H03,HLX,HN,LM90,LM96,RX}, which we shall strengthen using Corollaries~\ref{coro-LPpackingeven} and~\ref{coro-LPpackingodd}:
see Remarks~\ref{rem-PDSeven}~$(ii)$ and~\ref{rem-PDSodd}.

\begin{result}\label{res-ringPDS}
Let $p$ be prime.
There exists an $(n,r)$ Latin square type PDS in a group $G$ of order $n^2$ for the following $G$ and $r$, where $r \le n$.
\begin{enumerate}[(i)]

\item $G=\Z_{p^2}^w$ for $w > 1$, and $r$ is a positive integer multiple of $\frac{n}{p^{\s(p,w)}}$ {\rm (\cite[Thm.~3.2, Cor.~3.4]{HLX})}.

\item $G=\Z_{p^2}^w \times \Z_{p}^{2uw}$ for $w > 1$ and a positive integer~$u$, and $r$ is a positive integer multiple of $\frac{n}{p^{\s(p,w)}}$ {\rm (\cite[Thm.~3.1]{H02})}.

\item $G = \prod_{i=1}^v \Z_{p^i}^{2s_i}$ for nonnegative integers $s_1,\dots,s_v$, and $r$ is a positive integer multiple of $\frac{n}{p^{\,\gcd(s_1,\ldots,s_v)}}$ {\rm (\cite[Thm.~3.4, Cor.~3.5]{H03})}.
\end{enumerate}
\end{result}

\item \emph{Lifting constructions via finite local rings}

Finite local rings also provide constructions for Latin square type partial difference sets that lift examples from a group $G \times G$ to a larger group $G' \times G'$ \cite{H00,H03,HN}.
We shall present in Theorem~\ref{thm-composition} a ring-free lifting construction that applies to a collection of Latin square type partial difference sets, rather than just a single one.
This construction leads to the result stated in Theorem~\ref{thm-preview}, which recovers the parameters of partial difference sets constructed via lifting in each of \cite{H00}, \cite[Sects.~4,~5]{H03}, and \cite{HN}.

\item \emph{Product constructions}

Various delicate product constructions for Latin square type and negative Latin square type partial difference sets have been found \cite{M14,P08EJC,P09JCD,P19,PDS,P08DCC}. The following examples occur in $p$-groups having arbitrarily large exponent.

\begin{result}[{\cite[Cor.~6.2]{P08DCC}}]\label{res-PDSproductLP}
Let $p$ be prime, and let $u$ and $s_2,s_4,\dots,s_{2v}$ be nonnegative integers.
Let $G = \Z_p^{2u} \times \prod_{i=1}^v\Z_{p^{2i}}^{2s_{2i}}$ and let $n = \sqrt{|G|}$.
Then there is a partition of $G-1_G$ into $p$ regular partial difference sets in~$G$, of which $p-1$ are of $(n,\frac{n}{p})$ Latin square type and one is of $(n,\frac{n}{p}+1)$ Latin square type.
\end{result}

We shall strengthen Result~\ref{res-PDSproductLP} using Corollary~\ref{coro-LPpackingeven} (see Remark~\ref{rem-PDSeven}~$(v)$).

\begin{result}[{\cite[Thm.~4.2, Cor.~5.1]{P08DCC}}]\label{res-PDSproductNLP}
\quad
\begin{enumerate}[$(i)$]

\item
Let $u,w$ and $s_4,s_6,\dots,s_{2v}$ be nonnegative integers satisfying $u+w \ge 1$.
Let $G = \Z_2^{4u} \times \Z_4^{2w} \times \prod_{i=2}^v \Z_{2^{2i}}^{4s_{2i}}$ and let $n = \sqrt{|G|}$.
Then there is a partition of $G -1_G$ into $4$ regular partial difference sets in~$G$,
of which three are of $(n,\frac{n}{4})$ negative Latin square type and one is of $(n,\frac{n}{4}-1)$ negative Latin square type.

\item
Let $u$ and $s_2,s_4,\dots,s_{2v}$ be nonnegative integers.
Let $G = \Z_3^{2u+2} \times \prod_{i=1}^v \Z_{3^{2i}}^{2s_{2i}}$ and let $n = \sqrt{|G|}$.
Then there is a partition of $G -1_G$ into $3$ regular partial difference sets in~$G$, of which two are of $(n,\frac{n}{3})$ negative Latin square type and one is of $(n,\frac{n}{3}-1)$ negative Latin square type.

\end{enumerate}
\end{result}

We shall strengthen Result~\ref{res-PDSproductNLP} using Corollary~\ref{coro-NLPpacking} (see Remark~\ref{rem-PDSproduct}).

\item \emph{Constructions in Galois domains}

A final construction class uses cyclotomy \cite{FKM} and character sums \cite{M21,O16DCC} over Galois domains (direct products of finite fields) to produce partial difference sets in the direct product of elementary abelian groups.

\end{enumerate}

\section{LP-Packings}\label{sec3}

In this section, we introduce a $(c,t)$ LP-packing as a collection of $t$ disjoint regular $(tc,c)$ Latin square type PDSs, in an abelian group of order $t^2c^2$, whose union satisfies an additional property. We provide examples of LP-packings, describe their relationship to partial congruence partitions, establish constraints on their structure, and present a product construction.

A principal motivation for this paper was the realization that in many previous constructions of such collections,
possibly after some modification as summarized in Table~\ref{tab-LPpacking}, the $tc$-set $G-\sum_{i=1}^t P_i$ of elements avoided by the partial difference sets forms a subgroup $U$ of~$G$.
Our definition of a $(c,t)$ LP-packing is based on this observation.

\begin{definition}[LP-packing]\label{def-LPpacking}
Let $t>1$ and $c>0$ be integers. Let $G$ be an abelian group of order~$t^2c^2$, and let $U$ be a subgroup of $G$ of order~$tc$. A $(c,t)$ \emph{LP-packing in $G$ relative to $U$} is a collection $\{P_1,\dots,P_t\}$ of $t$ regular $(tc,c)$ Latin square type PDSs in $G$ for which $\sum_{i=1}^t P_i = G-U$.
\end{definition}

\begin{remark}
In Definition~\ref{def-LPpacking}, each $P_i$ is a $c(tc-1)$-subset of $G$, and $|G-U| = tc(tc-1)$, so the relation $\sum_{i=1}^t P_i = G-U$ is equivalent to the statement that the $P_i$ are disjoint and their union is~$G-U$.
\end{remark}

We next give a characterization of a $(c,t)$ LP-packing involving character sums.

\begin{lemma}\label{lem-LPpacking}
Let $P_1,\dots,P_t$ be $c(tc-1)$-subsets of an abelian group $G$ of order~$t^2c^2$ and let $U$ be a subgroup of $G$ of order~$tc$.
Then $\{P_1,\dots,P_t\}$ is a $(c,t)$ LP-packing in $G$ relative to~$U$ if and only if the multiset equality
\begin{equation}
\label{eqn-U}
\{\chi(P_1),\dots,\chi(P_t)\} = \begin{cases}
  \{-c,\dots, -c\} 	    & \mbox{if $\chi \in U^\perp$,} \\
  \{(t-1)c, -c, \dots, -c\} & \mbox{if $\chi \notin U^\perp$}
\end{cases}
\end{equation}
holds for all nonprincipal characters $\chi$ of~$G$.
\end{lemma}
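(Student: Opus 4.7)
The plan is to reformulate both parts of Definition~\ref{def-LPpacking} in character-theoretic terms and then compare with the right-hand side of \eqref{eqn-U}, so that the equivalence is reduced to matching two sets of character sums. Lemma~\ref{lem-LatinPDS} turns the requirement that each $P_i$ be a regular $(tc,c)$ Latin square type PDS into the condition that $\chi(P_i) \in \{-c,(t-1)c\}$ for every nonprincipal $\chi$, while Proposition~\ref{prop-fourier} will let us recover the relation $\sum_{i=1}^t P_i = G - U$ from its character sums.

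For the forward direction, I would first use Lemma~\ref{lem-LatinPDS} to locate each $\chi(P_i)$ in $\{-c,(t-1)c\}$ for every nonprincipal $\chi$, and let $s_\chi$ denote the number of indices $i$ with $\chi(P_i) = (t-1)c$; a short count gives $\sum_{i=1}^t \chi(P_i) = t(s_\chi - 1)c$. Applying $\chi$ to $\sum_{i=1}^t P_i = G - U$ yields $\chi(G) - \chi(U)$, which equals $-tc$ when $\chi \in U^\perp$ is nonprincipal on $G$ and equals $0$ when $\chi \notin U^\perp$ (using the standard fact $\chi(U) = |U|$ in the first case and $\chi(U) = 0$ in the second). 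Matching the two expressions forces $s_\chi = 0$ in the first case and $s_\chi = 1$ in the second, which is precisely the multiset equality \eqref{eqn-U}.

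For the converse, the multiset data in \eqref{eqn-U} let me compute $\chi\big(\sum_{i=1}^t P_i\big)$ directly for every $\chi \in \wh{G}$ (for the principal character the value $tc(tc-1)$ comes from the cardinality hypothesis $|P_i| = c(tc-1)$), and I would check case-by-case that these values coincide with the corresponding $\chi(G - U)$. Proposition~\ref{prop-fourier} then delivers $\sum_{i=1}^t P_i = G - U$, which is condition~(b) of Definition~\ref{def-LPpacking}; since $1_G \in U$, this also forces $1_G \notin P_i$ for each $i$, and Lemma~\ref{lem-LatinPDS} now upgrades the character-value data to the conclusion that each $P_i$ is a regular $(tc,c)$ Latin square type PDS, giving condition~(a).

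No real obstacle arises: the whole argument rests on the elementary identity $s(t-1)c + (t-s)(-c) = t(s-1)c$ together with the case split for $\chi(U)$ according to whether $\chi$ lies in $U^\perp$. The only point requiring care is making sure, in the converse, that $1_G \notin P_i$ is extracted from $\sum_{i=1}^t P_i = G - U$ before invoking Lemma~\ref{lem-LatinPDS}, since regularity of a PDS bakes in the exclusion of the identity.
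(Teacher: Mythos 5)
Your proposal is correct and follows essentially the same route as the paper's proof: the forward direction applies $\chi$ to $\sum_{i=1}^t P_i = G-U$ and combines this with the two-valued character sums supplied by Lemma~\ref{lem-LatinPDS}, and the converse computes $\chi\big(\sum_{i=1}^t P_i\big)$ for all $\chi$ and invokes the Fourier inversion formula (Proposition~\ref{prop-fourier}) to recover $\sum_{i=1}^t P_i = G-U$. Your extra care in extracting $1_G \notin P_i$ from $\sum_{i=1}^t P_i = G-U$ before applying Lemma~\ref{lem-LatinPDS} is a sensible refinement of a point the paper leaves implicit, but it does not change the argument.
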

\begin{proof}
Let $\chi$ be a nonprincipal character of~$G$.

Suppose firstly that $\{P_1,\dots,P_t\}$ is a $(c,t)$ LP-packing in $G$ relative to~$U$. Apply $\chi$ to the equation $\sum_{i=1}^t P_i = G-U$ to give
\[
\sum_{i=1}^t \chi(P_i) = \begin{cases}
  -tc & \mbox{if $\chi \in U^\perp$,} \\
  0   & \mbox{if $\chi \notin U^\perp$.}
\end{cases}
\]
Since $\chi(P_i) \in \{-c, (t-1)c\}$ for each $i$ by Lemma~\ref{lem-LatinPDS}, this implies that
\[
\{\chi(P_1),\dots,\chi(P_t)\} = \begin{cases}
  \{-c,\dots, -c\} 	    & \mbox{if $\chi \in U^\perp$,} \\
  \{(t-1)c, -c, \dots, -c\} & \mbox{if $\chi \notin U^\perp$},
\end{cases}
\]
as required.

Now suppose that \eqref{eqn-U} holds. Then each $P_i$ is a $c(tc-1)$-subset of $G$ for which $\chi(P_i) \in \{-c, (t-1)c\}$, and so by Lemma~\ref{lem-LatinPDS} is a regular $(tc,c)$ Latin square type PDS in~$G$. It remains to show that $\sum_{i=1}^t P_i = G-U$. We have from $|P_i|=c(tc-1)$ and \eqref{eqn-U} that, for all $\chi \in \wh{G}$,
\[
\chi \Big(\sum_{i=1}^t P_i\Big) = \begin{cases}
  tc(tc-1) & \mbox{if $\chi \in G^\perp$}, \\
  -tc      & \mbox{if $\chi \in U^\perp \sm G^\perp$,} \\
  0        & \mbox{if $\chi \notin U^\perp$.}
\end{cases}
\]
By the Fourier inversion formula (Proposition~\ref{prop-fourier}), this implies that
$\sum_{i=1}^t P_i = G-U$, as required.
\end{proof}

A $(t,t+1)$ partial congruence partition (see Definition~\ref{def-PCP}) lies at the heart of many configurations {\rm\cite{Dillon}, \cite[Sect.~4]{ES}}. We next show that this structure can equivalently be expressed as a $(1,t)$ LP-packing.

\begin{proposition}\label{prop-PCP}
Let $G$ be an abelian group of order $t^2$, let $U_1, \dots, U_t$ be $t$-subsets of~$G$, and let $U_0$ be a subgroup of $G$ of order~$t$. Then $\{ U_0,U_1,\dots,U_t\}$ is a $(t,t+1)$ partial congruence partition in $G$ if and only if
$\{ U_1-1_G,\dots,U_t-1_G\}$ is a $(1,t)$ LP-packing in $G$ relative to~$U_0$.
\end{proposition}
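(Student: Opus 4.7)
The plan is to verify both directions by computing in $\Z[G]$, exploiting the equivalence (for a $t$-subset $U$ containing $1_G$) between $U$ being a subgroup of order $t$ and the character condition $\chi(U)\in\{0,t\}$ for all $\chi\in\wh{G}$.

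\emph{Forward direction.} Suppose $\{U_0,U_1,\dots,U_t\}$ is a $(t,t+1)$ partial congruence partition. Each $U_i$ is a subgroup of order $t$, so by standard character theory $\chi(U_i)\in\{0,t\}$ for every $\chi\in\wh{G}$, and therefore $\chi(U_i-1_G)\in\{-1,t-1\}$ for every nonprincipal $\chi$. Lemma~\ref{lem-LatinPDS} (with $n=t$, $r=1$, $\ep=1$) then shows each $U_i-1_G$ is a regular $(t,1)$ Latin square type PDS. It remains to prove $\sum_{i=1}^{t}(U_i-1_G)=G-U_0$. The pairwise trivial intersection condition forces each nonidentity element to belong to at most one $U_i$, and a count of elements ($\sum_{i=0}^{t}(|U_i|-1)=(t+1)(t-1)=|G|-1$) shows that every nonidentity element lies in exactly one $U_i$. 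Hence $\sum_{i=0}^{t}U_i=G+t\cdot 1_G$ in $\Z[G]$, and subtracting $U_0+t\cdot 1_G$ gives $\sum_{i=1}^t(U_i-1_G)=G-U_0$, as required.

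\emph{Reverse direction.} Assume $\{U_1-1_G,\dots,U_t-1_G\}$ is a $(1,t)$ LP-packing relative to~$U_0$. Since each $U_i-1_G$ is a regular PDS, $1_G\notin U_i-1_G$ and $|U_i-1_G|=t-1$, so $U_i$ is a $t$-subset of $G$ containing $1_G$. From $\sum_{i=1}^{t}(U_i-1_G)=G-U_0$ we obtain $\sum_{i=1}^{t}U_i=G-U_0+t\cdot 1_G$; comparing coefficients on both sides shows that every element of $G-U_0$ lies in exactly one $U_i$, while no element of $U_0\setminus\{1_G\}$ lies in any $U_i$. This yields $U_i\cap U_j=\{1_G\}$ for $i\neq j$ and $U_i\cap U_0=\{1_G\}$ for each $i\geq 1$.

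It remains to show each $U_i$ is a subgroup. Applying Lemma~\ref{lem-LatinPDS} to the regular $(t,1)$ Latin square type PDS $U_i-1_G$ gives $\chi(U_i-1_G)\in\{-1,t-1\}$ for every nonprincipal $\chi$, so $\chi(U_i)\in\{0,t\}$ for every $\chi\in\wh{G}$ (using $\chi(U_i)=t$ for the principal character). Consequently $\chi(U_i^2)=\chi(U_i)^2=t\,\chi(U_i)$ for every $\chi$, and Proposition~\ref{prop-fourier} gives $U_i^2=t\,U_i$ in $\Z[G]$. Since $1_G\in U_i$, for any $x\in U_i$ comparing coefficients of $x$ in $U_i^2=t\,U_i$ yields $|U_i\cap xU_i^{(-1)}|=t=|U_i|$, whence $xU_i=U_i$; thus $U_i$ is closed under multiplication, and being a finite subset containing $1_G$ it is a subgroup of order~$t$. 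This completes the proof that $\{U_0,U_1,\dots,U_t\}$ is a $(t,t+1)$ partial congruence partition.

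The argument is essentially symmetric in the two directions and uses only elementary $\Z[G]$ manipulations together with Lemma~\ref{lem-LatinPDS}; the only mild subtlety is deducing that $U_i$ is a subgroup from the character constraint, which I would handle either via the group ring identity $U_i^2=tU_i$ as above, or equivalently by noting that the character values $\chi(U_i)\in\{0,t\}$ already match those of the indicator of a subgroup and invoking Fourier inversion.
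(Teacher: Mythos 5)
Your proof is correct, but it takes a different route from the paper's. The paper observes that both sides of the equivalence decompose into the same union condition ($\sum_{i=0}^t U_i = G + t\cdot 1_G$ versus $\sum_{i=1}^t (U_i - 1_G) = G - U_0$, which are trivially the same statement) together with an individual condition on each $U_i$, and then disposes of the latter by noting that ``$U_i$ is a subgroup of order $t$'' is equivalent to ``$U_i - 1_G$ is a regular $(t^2,t-1,t-2,0)$ PDS'' directly from Definition~\ref{def-pds}: a $(t^2,t-1,t-2,0)$ PDS has $\mu = 0$, so every difference of distinct elements of $U_i - 1_G$ lands back in $U_i - 1_G$, which combined with $1_G \in U_i$ and $U_i = U_i^{(-1)}$ gives closure, and the converse is the standard difference count within a subgroup. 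You instead route both directions through character sums: Lemma~\ref{lem-LatinPDS} to certify the PDS property from $\chi(U_i) \in \{0,t\}$ in the forward direction, and the group ring identity $U_i^2 = t\,U_i$ (obtained via Proposition~\ref{prop-fourier}) to recover the subgroup property in the reverse direction. Both arguments are valid; the paper's is shorter and purely combinatorial, while yours makes the character-theoretic content explicit, which is consonant with how LP-packings are handled elsewhere in the paper (Lemma~\ref{lem-LPpacking}). One small point in your reverse direction: from $|U_i \cap xU_i^{(-1)}| = t$ you first get $xU_i^{(-1)} = U_i$; you should set $x = 1_G$ to conclude $U_i^{(-1)} = U_i$ (or simply quote the regularity condition $D = D^{(-1)}$ from Definition~\ref{def-pds}) before deducing $xU_i = U_i$. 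This is a one-line repair, not a gap.
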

\begin{proof}
By Definition~\ref{def-PCP},
$\{ U_0,U_1,\dots,U_t\}$ is a $(t,t+1)$ partial congruence partition in $G$ if and only if each of $U_1,\dots,U_t$ is a subgroup of $G$ and $\sum_{i=0}^t U_i = G + t 1_G$.
By Definitions~\ref{def-LatinPDS} and~\ref{def-LPpacking},
$\{ U_1-1_G,\dots,U_t-1_G\}$ is a $(1,t)$ LP-packing in $G$ relative to~$U_0$
if and only if each of $U_1-1_G,\dots,U_t-1_G$ is a regular $(t^2,t-1,t-2,0)$ PDS in $G$ and $\sum_{i=1}^t (U_i-1_G) = G-U_0$. It is therefore sufficient to show for $i=1,\dots,t$ that $U_i$ is a subgroup of $G$ if and only if $U_i-1_G$ is a regular $(t^2,t-1,t-2,0)$ partial difference set in~$G$. This follows directly from Definition~\ref{def-pds}.
\end{proof}

The following result greatly constrains the degree of a partial congruence partition.

\begin{result}\label{res-PCPdegree}
Suppose there exists an $(n,r)$ partial congruence partition of degree $r$ in an abelian group $G$ of order~$n^2$. Then $r \le n+1$, and
\begin{enumerate}[(i)]
\item
if $r \ge \floor{\sqrt{n}}+2$ then $G$ is elementary abelian {\rm(\cite[Thms.~3.1,~3.4]{J89a})}.

\item
if $r=\sqrt{n}+1$ then either $G$ is elementary abelian or $G=(\Z_{p^2})^w$ for some prime $p$ and positive integer~$w$ {\rm(\cite[Cor.~5.3]{J89b})}.
\end{enumerate}
\end{result}

By Result~\ref{res-PCPdegree}~$(i)$, a $(t,t+1)$ partial congruence partition in an abelian group~$G$ can exist only when $t = p^s$ for some prime $p$ and positive integer~$s$, and $G \cong \Z_p^{2s}$. 
By Proposition~\ref{prop-PCP}, such a structure exists if and only if there is a $(1,p^s)$ LP-packing in $G$ relative to an order $p^s$ subgroup. 
We now show, in an example we shall refer to frequently in the rest of the paper, that this structure exists for all primes $p$ and positive integers~$s$ by reference to a spread of a vector space over a finite field.

\begin{definition}\label{def-spread}
Let $p$ be prime and $s$ a positive integer, and let $V$ be a $2s$-dimensional vector space over the field $\F_p$ with identity~$1_V$. A \emph{spread} of $V$ is a $(p^s+1)$-set $\{H_0,H_1,\dots,H_{p^s}\}$ of $s$-dimensional vector subspaces of $V$ such that $H_i \cap H_j = \{1_V\}$ for all $i \ne j$.
\end{definition} 

\begin{example}\label{ex-spread}
Let $p$ be prime and $s$ a positive integer, and let $V$ be a $2s$-dimensional vector space over the field $\F_p$ with identity~$1_V$.
Then there exists a spread $\{H_0, H_1, \dots, H_{p^s}\}$ of $V$ \cite[Chap.~VII, Sect.~5]{HP73}.
By Definitions~\ref{def-PCP} and~\ref{def-spread}, 
$\{ H_0, \ldots, H_{p^s} \}$ forms a $(p^s,p^s+1)$ partial congruence partition in~$V$, because $H_i$ is a subgroup of order $p^s$ in $V \cong \Z_p^{2s}$ if and only if $H_i$ is an $s$-dimensional vector subspace of~$V$.
Then by Proposition~\ref{prop-PCP} with $t=p^s$, we see that
$\{ H_1-1_V, \ldots, H_{p^s}-1_V \}$ is a $(1,p^s)$ LP-packing in $V \cong \Z_{p}^{2s}$ relative to~$H_0 \cong \Z_p^s$,
and by Lemma~\ref{lem-LPpacking}
each nonprincipal character of $V$ is principal on exactly one of $H_0,\dots,H_{p^s}$. 
\end{example}

\begin{remark}\label{rem-PCP}
According to Definition~\ref{def-LPpacking}, a $(c,t)$ LP-packing involves a collection of $t$ disjoint regular $(tc,c)$ Latin square type PDSs in an abelian group $G$ of order~$t^2c^2$. Since this collection covers all but $tc$ elements of $G$, and each PDS contains $c(tc-1)$ elements, the collection has maximum possible size when $c>1$. The collection is not necessarily of maximum possible size when $c=1$: for $p$ prime and $t=p^s$, the $(t,t+1)$ partial congruence partition of Example~\ref{ex-spread} is a collection of $t+1$ disjoint regular $(t,1)$ Latin square type PDSs in the elementary abelian group of order $t^2$.
However, as Result~\ref{res-PCPdegree} shows, if we instead attempt to create a partial congruence partition in a non-elementary abelian group of order $t^2$ then the degree must drop from $t+1$ to at most $\floor{\sqrt{t}}+1$, covering a proportion of only about $1/\sqrt{t}$ of the elements of~$G$.
In contrast, in Section~\ref{sec5} we shall provide constructions of $(c,t)$ LP-packings with $c>1$ in various nonelementary abelian groups.
For this reason, we regard a $(c,t)$ LP-packing with $c>1$ as a natural generalization of a $(t,t+1)$ partial congruence partition.
\end{remark}

The characterization in Lemma~\ref{lem-LPpacking}, together with Lemma~\ref{lem-LatinPDS}, shows that we can combine subsets from an LP-packing to obtain Latin square type PDSs with various parameters.
\begin{lemma}\label{lem-LPpackingtoPDS}
Suppose that $\{P_1, \ldots, P_t\}$ is a $(c,t)$ LP-packing in an abelian group $G$ of order $t^2c^2$ relative to a subgroup $U$ of order~$tc$. Let $I$ be a $b$-subset of $\{1,\dots,t\}$. Then
\begin{enumerate}[$(i)$]
\item $\sum_{i \in I} P_i$ is a regular $(tc,bc)$ Latin square type PDS in~$G$.
\item $\sum_{i \in I} P_i+U-1_G$ is a regular $(tc,bc+1)$ Latin square type PDS in~$G$.
\end{enumerate}
\end{lemma}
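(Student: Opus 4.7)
The strategy is to reduce both claims to the character-theoretic criterion of Lemma~\ref{lem-LatinPDS}, using the multiset description of the character sums $\chi(P_i)$ supplied by Lemma~\ref{lem-LPpacking}. Throughout, I would write $D = \sum_{i\in I} P_i$ and, for part $(ii)$, $D' = D + U - 1_G$, and verify three things for each of $D$ and $D'$: that it has the correct cardinality, that $1_G$ is not a member and that it is closed under inversion, and that all nonprincipal character sums lie in the prescribed two-element set $\{-r,\,tc-r\}$.

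For part $(i)$, the cardinality $|D|=bc(tc-1)$ follows from the disjointness of the $P_i$ (a consequence of $\sum_{i=1}^t P_i = G-U$), and regularity is inherited from the regularity of each~$P_i$. The main calculation is the character sum: by Lemma~\ref{lem-LPpacking}, if $\chi$ is nonprincipal and $\chi \in U^\perp$ then $\chi(P_i)=-c$ for every~$i$, hence $\chi(D)=-bc$; and if $\chi\notin U^\perp$, then exactly one of the $\chi(P_i)$ equals $(t-1)c$ and the rest equal $-c$, so $\chi(D)$ equals $(t-b)c$ or $-bc$ according to whether the index achieving $(t-1)c$ lies in $I$ or not. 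Either way $\chi(D)\in\{-bc,\,(t-b)c\}=\{-bc,\,tc-bc\}$, so Lemma~\ref{lem-LatinPDS} with $(n,r)=(tc,bc)$ concludes part~$(i)$.

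For part $(ii)$, note first that since $\sum_{i=1}^t P_i = G-U$, every $P_i$ is disjoint from $U$, so $D$ and $U-1_G$ are disjoint subsets of $G\sm\{1_G\}$, giving $|D'|=bc(tc-1)+(tc-1)=(bc+1)(tc-1)$ and also $1_G\notin D'$; inversion-closedness of $D'$ follows from that of $D$ and of~$U$. For the character sums, I would compute $\chi(D')=\chi(D)+\chi(U)-1$, using $\chi(U)=tc$ for $\chi\in U^\perp$ and $\chi(U)=0$ otherwise. Combining with the values of $\chi(D)$ found in part $(i)$ gives $\chi(D')=(t-b)c-1$ for nonprincipal $\chi\in U^\perp$, and $\chi(D')\in\{-(bc+1),\,(t-b)c-1\}$ for $\chi\notin U^\perp$. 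Both possible values coincide with $\{-(bc+1),\,tc-(bc+1)\}$, so Lemma~\ref{lem-LatinPDS} with $(n,r)=(tc,bc+1)$ finishes part~$(ii)$.

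There is no real obstacle here: the only thing to be slightly careful about is verifying disjointness of $D$ and $U-1_G$ in part~$(ii)$ (which uses the identity $\sum_{i=1}^t P_i=G-U$ rather than merely the LP-packing characterization via character sums), and keeping track of the arithmetic $tc-(bc+1)=(t-b)c-1$ so that the two allowed character values correctly match the $(n,r)=(tc,bc+1)$ Latin square type specification.
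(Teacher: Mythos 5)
Your proposal is correct and follows exactly the route the paper intends: the paper states this lemma without a written proof, noting only that it follows from the character-sum characterizations in Lemma~\ref{lem-LPpacking} and Lemma~\ref{lem-LatinPDS}, which is precisely the reduction you carry out. Your cardinality counts and the character-sum values $\{-bc,(t-b)c\}$ and $\{-(bc+1),(t-b)c-1\}$ are all correct, so nothing is missing.
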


\begin{remark}\label{rem-LPpacking}
Suppose that $\{P_1, \ldots, P_t\}$ is a $(c,t)$ LP-packing in an abelian group $G$ of order $t^2c^2$ relative to a subgroup $U$ of order~$tc$.
\begin{enumerate}[(i)]

\item
By Definition~\ref{def-LPpacking} and Lemma~\ref{lem-LPpackingtoPDS}, we see that
$\{P_1,\dots,P_{t-1},P_t+U-1_G\}$ is a partition of $G-1_G$ into $t$ regular partial difference sets in~$G$, of which the first $t-1$ are of $(tc,c)$ Latin square type and the last is of $(tc,c+1)$ Latin square type.

\item
For each $i$, let $\wti{{P\mkern 0mu}_i} 
= \{\chi \in \wh{G} \mid \chi(P_i) = (t-1)c\}$.
By Lemma~\ref{lem-LatinPDS}, each $\wti{{P\mkern 0mu}_i}$ is a regular $(tc,c)$ Latin square type PDS in~$\wh{G}$, and by Lemma~\ref{lem-LPpacking} each $\chi \in \wh{G} \sm U^\perp$ belongs to exactly one~$\wti{{P\mkern 0mu}_i}$ and each $\chi \in U^\perp$ belongs to no~$\wti{{P\mkern 0mu}_i}$. By Definition~\ref{def-LPpacking}, the collection $\{\wti{P_1}, \ldots, \wti{{P\mkern 0mu}_t}\}$
is therefore a $(c,t)$ LP-packing in $\wh{G}$ relative to~$U^{\perp}$.
\end{enumerate}
\end{remark}

Combining the subsets of an LP-packing into equally-sized collections gives an LP-packing with fewer subsets.

\begin{lemma}\label{lem-LPpackingunion}
Suppose there exists a $(c,t)$ LP-packing in an abelian group $G$ of order $t^2c^2$ relative to a subgroup $U$ of order~$tc$, and suppose $s$ divides~$t$. Then there exists an $(sc,\frac{t}{s})$ LP-packing in $G$ relative to~$U$.
\end{lemma}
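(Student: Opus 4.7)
The plan is to take a $(c,t)$ LP-packing $\{P_1,\ldots,P_t\}$ in $G$ relative to $U$, partition the index set $\{1,\ldots,t\}$ into $t/s$ blocks $I_1,\ldots,I_{t/s}$ of size $s$ each, and define $Q_j = \sum_{i \in I_j} P_i$ for $j = 1,\ldots,t/s$. I then want to show that $\{Q_1,\ldots,Q_{t/s}\}$ is an $(sc,t/s)$ LP-packing in $G$ relative to the same subgroup~$U$. Note that $U$ has order $tc = (sc)(t/s)$, which is the order required by Definition~\ref{def-LPpacking} for parameters $(sc,t/s)$.

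First I would check sizes: since the $P_i$ are disjoint, $|Q_j| = s \cdot c(tc-1) = (sc)\big((t/s)(sc) - 1\big)$, which is the correct cardinality for a regular $(tc,sc)$ Latin square type PDS (and equivalently the correct cardinality $c'(t'c'-1)$ for a member of a $(c',t')$ LP-packing with $c'=sc$ and $t'=t/s$).

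Next I would verify the character-sum condition of Lemma~\ref{lem-LPpacking} for the new collection. Let $\chi$ be a nonprincipal character of~$G$. If $\chi \in U^{\perp}$, then Lemma~\ref{lem-LPpacking} applied to $\{P_1,\ldots,P_t\}$ gives $\chi(P_i) = -c$ for every $i$, so by linearity $\chi(Q_j) = -sc$ for every $j$. If $\chi \notin U^{\perp}$, then exactly one index $i_0$ satisfies $\chi(P_{i_0}) = (t-1)c$ while the remaining $t-1$ indices satisfy $\chi(P_i) = -c$; letting $j_0$ be the unique block containing $i_0$, linearity yields
\[
\chi(Q_{j_0}) = (t-1)c - (s-1)c = (t-s)c = \big(t/s - 1\big)\, sc,
\]
while $\chi(Q_j) = -sc$ for $j \ne j_0$. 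Thus the multiset $\{\chi(Q_1),\ldots,\chi(Q_{t/s})\}$ is either $\{-sc,\ldots,-sc\}$ (when $\chi \in U^{\perp}$) or $\{(t/s-1)sc,-sc,\ldots,-sc\}$ (when $\chi \notin U^{\perp}$), which is exactly the condition required by Lemma~\ref{lem-LPpacking} for an $(sc,t/s)$ LP-packing relative to~$U$.

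There is no genuine obstacle here: the argument is a one-line application of linearity of characters, combined with the observation that exactly one of the original $t$ character sums is ``large'' whenever $\chi \notin U^\perp$, so at most one block absorbs this contribution. One could alternatively invoke Lemma~\ref{lem-LPpackingtoPDS}(i) to see directly that each $Q_j$ is a regular $(tc,sc)$ Latin square type PDS, and then note that $\sum_{j=1}^{t/s} Q_j = \sum_{i=1}^{t} P_i = G - U$ since the $P_i$ are disjoint, which together with Definition~\ref{def-LPpacking} yields the conclusion without passing through characters.
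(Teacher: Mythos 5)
Your proposal is correct and follows essentially the same route as the paper: the paper's proof likewise groups the $P_i$ into $t/s$ blocks of size $s$, sums within each block, and cites Lemma~\ref{lem-LPpacking}, leaving to the reader the character-sum verification that you spell out explicitly (and correctly).
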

\begin{proof}
Let $\{P_1, \dots, P_t\}$ be a $(c,t)$ LP-packing in $G$ relative to~$U$, and let
\[
P'_i = P_{is+1} + P_{is+2} + \dots + P_{is+s} \quad \mbox{for $i = 0, 1,\dots, \frac{t}{s}-1$}.
\]
Then Lemma~\ref{lem-LPpacking} shows that $\{P'_0,\dots,P'_{\frac{t}{s}-1}\}$ is an $(sc,\frac{t}{s})$ LP-packing in $G$ relative to~$U$.
\end{proof}

The following result, which is inspired by \cite[Lemma 3.2]{DP}, allows us to establish some constraints on LP-packings in Proposition~\ref{prop-LPpackingstruc}. These constraints assist in finding examples computationally, as demonstrated in Example~\ref{ex-LPpacking444}.

\begin{lemma}\label{lem-LSPDS}
Let $G$ be a group of order $n^2$ and let $U$ be a subgroup of order~$n$.
Let $\{g_0,g_1,\dots, $ $g_{n-1}\}$ be a complete set of right coset representatives for $U$ in $G$, where $Ug_0=U$. Suppose $P$ is a regular $(n,r)$ Latin square type PDS in $G$ for which $P \cap U = \es$. Then $|P \cap Ug_i|=r$ for each $i = 1, \dots, n-1$.

\end{lemma}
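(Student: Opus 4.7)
The plan is to use character theory to establish the group-ring identity $PU = r(G - U)$ in $\mathbb{Z}[G]$, which encodes precisely the desired coset counts $|P \cap Ug_i| = r$ for $i \ne 0$ together with the hypothesis $P \cap U = \emptyset$. I will treat $G$ as abelian, consistent with the paper's context. Let $a_i = |P \cap Ug_i|$ and let $\bar g_i$ denote the image of $g_i$ in $G/U$, so that $a_0 = 0$ by hypothesis and $\sum_i a_i = |P| = r(n-1)$. I would first pass to the quotient via $\bar P = \sum_i a_i \bar g_i \in \mathbb{Z}[G/U]$. Every $\chi \in U^\perp$ is constant on $U$-cosets, so $\chi(P) = \bar\chi(\bar P)$ under the identification $U^\perp \cong \wh{G/U}$, and Lemma~\ref{lem-LatinPDS} then gives $\bar\chi(\bar P) \in \{-r, n-r\}$ for each nonprincipal character $\bar\chi$ of $G/U$.

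The key step is to force every such nonprincipal value to equal $-r$. I would apply the Fourier inversion formula (Proposition~\ref{prop-fourier}) to $\bar P$ at $1_{G/U}$, using the hypothesis $a_0 = 0$, to obtain
\[
\sum_{\bar\chi \ne \bar\chi_0} \bar\chi(\bar P) = -r(n-1).
\]
Since each of the $n-1$ nonprincipal summands lies in $\{-r, n-r\}$ and together they sum to $(n-1)(-r)$, a short count---letting $s$ denote the number of summands equal to $n-r$ gives $sn - (n-1)r = -(n-1)r$, hence $s=0$---forces every one of these character values to equal $-r$.

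Finally, I would apply Fourier inversion once more at $\bar g_i$ for $i \ne 0$: using the orthogonality relation $\sum_{\bar\chi \ne \bar\chi_0} \overline{\bar\chi(\bar g_i)} = -1$ (valid since $\bar g_i \ne 1_{G/U}$), a brief calculation delivers $a_i = \tfrac{1}{n}\bigl[r(n-1) + (-r)(-1)\bigr] = r$, as required. The main obstacle is the counting step in the middle paragraph: once the common character value $-r$ has been extracted from the hypothesis $a_0 = 0$, the remaining verification is routine Fourier-inversion bookkeeping. I expect the entire proof to run no longer than about half a page.
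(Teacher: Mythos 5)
Your proof is correct, but it takes a genuinely different route from the paper's. The paper works entirely in the group ring: it restricts the PDS identity $PP^{(-1)} = r(n-r) + (n-2r)P + (r^2-r)G$ to the subgroup $U$, observes that $\sum_{g \in U}[PP^{(-1)}]_g = \sum_{i=1}^{n-1}|P \cap Ug_i|^2$ (since $xy^{-1} \in U$ exactly when $x,y$ share a right coset), computes this second moment to be $r^2(n-1)$, and then invokes the Cauchy--Schwarz equality case against the first moment $\sum_i |P \cap Ug_i| = r(n-1)$ to force all coset intersections to equal $r$. You instead push the coset-distribution element $\bar P$ into $\Z[G/U]$, use Lemma~\ref{lem-LatinPDS} to pin each nonprincipal character value of $\bar P$ to $\{-r,\,n-r\}$, rule out the value $n-r$ by Fourier inversion at the identity (exploiting $P \cap U = \es$), and then Fourier-invert at each $\bar g_i$; all of these steps check out, including the orthogonality bookkeeping $\sum_{\bar\chi \ne \bar\chi_0}\ol{\bar\chi(\bar g_i)} = -1$. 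The trade-off: your argument requires $G$ abelian (the lemma as stated, and the paper's proof, do not--the second-moment argument is valid in any group), but it harmonizes with the character-sum formalism of Lemmas~\ref{lem-LatinPDS} and~\ref{lem-LPpacking} that drives the rest of the paper, and it yields the slightly stronger intermediate fact that every nonprincipal $\chi \in U^\perp$ satisfies $\chi(P) = -r$. Since the lemma is only ever applied to abelian groups in this paper, the loss of generality is harmless here, though worth flagging.
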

\begin{proof}
Since $P \cap U = \es$,
\begin{equation}\label{eqn-|P|}
r(n-1) = |P| = \sum_{i=1}^{n-1} |P\cap U g_i|.
\end{equation}
For all $x,y \in P$, we have
$x y^{-1} \in U$ if and only if $x,y$ belong to the same right coset of $U$.
Therefore
\begin{equation}\label{eqn-gUPP}
\sum_{i=1}^{n-1} |P \cap U g_i|^2 = \sum_{g \in U}[PP^{(-1)}]_g.
\end{equation}
Now $P$ is an $(n^2,r(n-1),n+r^2-3r,r^2-r)$ PDS in $G$, and so by \eqref{eqn-PDSgpring}
\[
PP^{(-1)} = r(n-r) + (n-2r)P + (r^2-r)G.
\]
Since $P \cap U = \es$, this implies that
\[
\sum_{g \in U} [PP^{(-1)}]_g = r(n-r) + (r^2-r)n = r^2(n-1).
\]
Substitution in \eqref{eqn-gUPP} then gives
\[
\sum_{i=1}^{n-1} |P \cap U g_i|^2 = r^2(n-1),
\]
which, by the Cauchy-Schwarz inequality and \eqref{eqn-|P|}, gives
$|P \cap Ug_i|=r$ for each $i=1,\dots,n-1$ as required.
\end{proof}

\begin{proposition}\label{prop-LPpackingstruc}
Let $G$ be an abelian group of order $t^2c^2$, and let $U$ be a subgroup of $G$ of order~$tc$.
Let $\{g_0,g_1,\dots,g_{tc-1}\}$ be a complete set of right coset representatives for $U$ in $G$, where $Ug_0=U$, and define subsets $S,N$ of $\{1,\dots,tc-1\}$ by
\begin{align*}
& S = \{j \mid Ug_j = Ug_j^{-1}\}, \\
& \mbox{$N$ contains exactly one of $j,k$ when $Ug_j \ne Ug_j^{-1} = Ug_k$}.
\end{align*}
Suppose that $\{P_1,\dots,P_t\}$ is a $(c,t)$ LP-packing in $G$ relative to~$U$.
Then there are $c$-subsets $H_{ij}$ of $U$ for $i=1,\dots,t$ and $j=1,\dots,tc-1$ satisfying
\begin{align*}
P_i &=\sum_{j \in S} H_{ij}g_j + \sum_{j \in N} \big(H_{ij}g_j + H_{ij}^{(-1)}g_j^{-1}\big) \quad \mbox{for each $i$}, \\
H_{ij}^{(-1)}g_j^{-1} &= H_{ij}g_j \quad \mbox{for $j \in S$}, \\
H_{ij}^{(-1)}g_j^{-1} &= H_{ik}g_k \quad \mbox{for $j \in N$ where $Ug_j^{-1} = Ug_k$}, \\
U &= \sum_{i=1}^t H_{ij} \quad \mbox{for each~$j$}.
\end{align*}
\end{proposition}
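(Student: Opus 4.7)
The plan is to decompose each $P_i$ across the nontrivial right cosets of $U$ in $G$, then to extract the three asserted identities from the regularity condition $P_i=P_i^{(-1)}$ and from the packing equation $\sum_{i=1}^t P_i = G-U$. First, since $\sum_{i=1}^t P_i = G-U$ in $\Z[G]$, each $P_i$ is disjoint from $U$. Because $P_i$ is a regular $(tc,c)$ Latin square type PDS in $G$, with $|G|=(tc)^2$ and $|U|=tc$, Lemma~\ref{lem-LSPDS} (applied with $n=tc$ and $r=c$) yields $|P_i \cap Ug_j|=c$ for every $j \in \{1,\dots,tc-1\}$. I would then define $H_{ij} = (P_i \cap Ug_j)g_j^{-1} \subseteq U$, so that $H_{ij}$ is a $c$-subset of $U$ with $P_i \cap Ug_j = H_{ij}g_j$, giving the coset decomposition $P_i = \sum_{j \in S} H_{ij}g_j + \sum_{j \in N}(H_{ij}g_j + H_{ik}g_k)$, where for $j \in N$ the index $k$ is the one with $Ug_k = Ug_j^{-1}$.

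Next I would use $P_i=P_i^{(-1)}$ to obtain the two symmetry relations. Taking inverses sends coset $Ug_j$ to coset $Ug_j^{-1}$ and reverses elements within, so $P_i \cap Ug_j^{-1} = (P_i \cap Ug_j)^{(-1)}$ for every $j$. For $j \in S$, where $Ug_j^{-1}=Ug_j$, this reads $H_{ij}g_j = H_{ij}^{(-1)}g_j^{-1}$, the first asserted identity. For $j \in N$ with partner $k$, it reads $H_{ik}g_k = H_{ij}^{(-1)}g_j^{-1}$, the second asserted identity; substituting this into the coset decomposition replaces $H_{ik}g_k$ by $H_{ij}^{(-1)}g_j^{-1}$ and produces the stated folded formula for $P_i$. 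Finally, restricting the packing identity $\sum_{i=1}^t P_i = G-U$ to the coset $Ug_j$ gives $\sum_{i=1}^t H_{ij}g_j = Ug_j$ in $\Z[G]$, and right-multiplying by $g_j^{-1}$ delivers $\sum_{i=1}^t H_{ij}=U$.

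No step presents a genuine obstacle: the heart of the argument is the uniform coset intersection count $|P_i \cap Ug_j|=c$ supplied by Lemma~\ref{lem-LSPDS}, and everything else is bookkeeping with cosets and the involution $j \mapsto k$. The only care required is to split $\{1,\dots,tc-1\}$ cleanly into $S$ and the pairs $\{j,k\}$ indexed by $N$, so that the folded description of $P_i$ accounts for each nontrivial coset of $U$ exactly once.
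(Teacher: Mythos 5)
Your proposal is correct and follows essentially the same route as the paper's proof: both decompose each $P_i$ over the nontrivial cosets of $U$, invoke Lemma~\ref{lem-LSPDS} to get the uniform intersection size $c$ and hence the sets $H_{ij}$, use $P_i=P_i^{(-1)}$ to fold the decomposition and obtain the two symmetry relations, and restrict $\sum_{i=1}^t P_i = G-U$ to each coset to conclude $U=\sum_{i=1}^t H_{ij}$. No substantive difference.
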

\begin{proof}
By Definition~\ref{def-LPpacking}, each $P_i$ is a regular $(tc,c)$ Latin square type PDS in $G$, and $G-U$ is the disjoint union of the~$P_i$.
Write
\begin{align}
P_i &= \sum_{j=1}^{tc-1} (P_i \cap Ug_j) \nonumber \\
    &= \sum_{j \in S} (P_i \cap Ug_j) +
       \sum_{j \in N} \big((P_i \cap Ug_j) + (P_i \cap Ug_j^{-1})\big). \label{eqn-PiUgj}
\end{align}
By Lemma~\ref{lem-LSPDS}, for each $i = 1,\dots, t$ and each $j = 1,\dots,tc-1$ we may write
\[
P_i \cap U g_j = H_{ij} g_j \quad \mbox{for some $c$-subset $H_{ij}$ of~$U$}.
\]
Substitute in \eqref{eqn-PiUgj} and use $P_i = P_i^{(-1)}$ to give
\[
P_i = \sum_{j \in S} H_{ij} g_j + \sum_{j \in N} \big(H_{ij}g_j + H_{ij}^{(-1)}g_j^{-1}\big),
\]
and the constraints
$H_{ij}^{(-1)}g_j^{-1} = H_{ij}g_j$ for $j \in S$, and
$H_{ij}^{(-1)}g_j^{-1} = H_{ik}g_k$ for $j \in N$ where $Ug_j^{-1} = Ug_k$.
For each $j$, we have $Ug_j = \sum_{i=1}^t (P_i \cap Ug_j) = \sum_{i=1}^t H_{ij} g_j$ and therefore $U = \sum_{i=1}^t H_{ij}$.
\end{proof}

\begin{example}\label{ex-LPpacking444}
Using Proposition~\ref{prop-LPpackingstruc}, we obtain the following exhaustive search results 
for LP-packings in $\Z_4^3$ relative to two nonisomorphic order~8 subgroups.
\begin{enumerate}[$(i)$]
\item
There are exactly $1536$ distinct $(4,2)$ LP-packings $\{P_1,P_2\}$ in $\Z_4^3=\lan x,y,z \ran$ relative to $(2\Z_4) \times (2\Z_4) \times (2\Z_4)=\lan x^2,y^2,z^2 \ran$, one of which is
    \begin{align*}
    P_1=&(1+x^2+y^2+x^2y^2)x+(x^2+x^2y^2+z^2+y^2z^2)y+(x^2+x^2z^2+y^2+y^2z^2)z\\
    &+(y^2+z^2+x^2+x^2y^2z^2)xy+(1+x^2+z^2+x^2z^2)xz \\
    &+(x^2+x^2y^2+x^2z^2+x^2y^2z^2)yz+(y^2+z^2+x^2y^2+x^2z^2)xyz, \\
    P_2=&\lan x,y,z \ran-\lan x^2,y^2,z^2 \ran-P_1.
    \end{align*}
The presented form of $P_1$ shows that both $P_1$ and $P_2$ intersect the seven nonidentity cosets of the subgroup $\lan x^2,y^2,z^2 \ran$ in $\lan x,y,z \ran$ in exactly 4 elements.
\item
There are exactly $512$ distinct $(4,2)$ LP-packings $\{R_1,R_2\}$ in $\Z_4^3=\lan x,y,z \ran$ relative to $\Z_4 \times (2\Z_4)=\lan x,y^2 \ran$, one of which is
    \begin{align*}
    R_1=&(y^2+xy^2+x^2y^2+x^3)z+(x+xy^2+x^3+x^3y^2)z^2+(y^2+x+x^2y^2+x^3y^2)z^3\\
    &+(x+x^2+x^2y^2+x^3y^2)y+(1+x^2+x^3+x^3y^2)yz \\
    &+(x+x^2+x^2y^2+x^3y^2)yz^2+(1+x+xy^2+x^2)y^3z^3, \\
    R_2=&\lan x,y,z \ran-\lan x,y^2 \ran-R_1.
    \end{align*}
The presented form of $R_1$ shows that both $R_1$ and $R_2$ intersect the seven nonidentity cosets of the subgroup $\lan x,y^2 \ran$ in $\lan x,y,z \ran$ in exactly 4 elements.
\end{enumerate}
\end{example}

We conclude this section with a product construction for LP-packings.

\begin{theorem}\label{thm-LPpackingproduct}
For $j=1,2$, suppose there exists a $(c_j,t)$ LP-packing in an abelian group $G_j$ of order $t^2c_j^2$ relative to a subgroup $U_j$ of order~$tc_j$. Then there exists a $(tc_1c_2,t)$ LP-packing in $G_1 \times G_2$ relative to $U_1 \times U_2$.
\end{theorem}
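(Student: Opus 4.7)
The plan is to produce the required LP-packing explicitly as a ``twisted'' product. Reindex the two given LP-packings by $\Z_t$, writing them as $\{P_a : a \in \Z_t\}$ and $\{Q_a : a \in \Z_t\}$, and for each $i \in \Z_t$ define
\[
R_i \;=\; \sum_{j \in \Z_t} P_j \, Q_{i+j} \;+\; U_1 \, Q_i \;+\; P_i \, U_2
\]
as an element of $\Z[G_1 \times G_2]$, where addition of indices is modulo~$t$. I would then verify that $\{R_i : i \in \Z_t\}$ satisfies the character condition of Lemma~\ref{lem-LPpacking} for a $(tc_1c_2,t)$ LP-packing in $G_1 \times G_2$ relative to $U_1 \times U_2$.

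Before analysing characters, I would confirm the ``bookkeeping'' identity $\sum_{i} R_i = G_1 G_2 - U_1 U_2$. Expanding and using $\sum_a P_a = G_1 - U_1$ and $\sum_a Q_a = G_2 - U_2$ (Definition~\ref{def-LPpacking}), the double sum collapses to $(G_1 - U_1)(G_2 - U_2)$, and adding the boundary contributions $U_1(G_2 - U_2)$ and $(G_1 - U_1)U_2$ yields $G_1 G_2 - U_1 U_2$, as required. A size count then shows $|R_i| = tc_1c_2(t^2c_1c_2 - 1)$, which is the correct cardinality.

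For the character check, write any nonprincipal character of $G_1 \times G_2$ as $\chi = \chi_1\chi_2$, with $\chi_j \in \wh{G_j}$. Recall that $\chi_j(U_j)$ equals $tc_j$ if $\chi_j \in U_j^\perp$ and $0$ otherwise, and that by Lemma~\ref{lem-LatinPDS} the values $\chi_j(P_a)$ and $\chi_j(Q_a)$ lie in $\{-c_j,(t-1)c_j\}$ whenever $\chi_j$ is nonprincipal on $G_j$. I would break into the four cases determined by whether each $\chi_j$ is principal on $G_j$ or not; in each case the value of $\chi(R_i)$ simplifies dramatically. When exactly one of $\chi_1,\chi_2$ (say $\chi_2$) is principal on $G_2$, the boundary term $P_i \, U_2$ absorbs the ``large'' contribution $c_2(tc_2-1)$ coming from the cross sum, leaving $\chi(R_i) = tc_2\,\chi_1(P_i)$, which takes the correct values. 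When both $\chi_j$ are nonprincipal on $G_j$ but at least one is in $U_j^\perp$, the appropriate boundary term again cancels everything except $-tc_1c_2$. When $\chi_1 \notin U_1^\perp$ and $\chi_2 \notin U_2^\perp$, the boundary terms vanish and a short calculation reduces $\chi(R_i)$ to $tc_1\,\chi_2(Q_{i+J})$, where $J$ is the unique index with $\chi_1(P_J) = (t-1)c_1$; the cyclic shift $j \mapsto i+j$ then ensures that exactly one value of $i$ hits the unique index~$K$ with $\chi_2(Q_K) = (t-1)c_2$, producing the required multiset $\{(t-1)tc_1c_2,-tc_1c_2,\dots,-tc_1c_2\}$ of Lemma~\ref{lem-LPpacking}.

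The main obstacle is finding the correct construction: the naïve choice $R_i = \sum_j P_j Q_{i+j}$ fails whenever exactly one of $\chi_1,\chi_2$ is principal on its ambient group, because $\chi_j(P_a) = c_j(tc_j-1)$ is then far too large, and also $\sum_i R_i$ falls short of $G_1 G_2 - U_1 U_2$. The corrections $U_1 Q_i$ and $P_i U_2$ are precisely the terms needed to fill the missing coverage $U_1(G_2-U_2) + (G_1-U_1)U_2$, and simultaneously they absorb the excess character mass in exactly those cases where it arises. Once the formula for $R_i$ is in hand, the verification is a routine case-by-case calculation with character sums.
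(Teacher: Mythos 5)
Your construction is identical to the paper's: your $R_i = \sum_{j} P_j\,Q_{i+j} + U_1 Q_i + P_i U_2$ is exactly the paper's $K_\ell = P_{1,\ell}U_2 + U_1 P_{2,\ell} + \sum_{i} P_{1,i}P_{2,i+\ell}$, and your verification via the character criterion of Lemma~\ref{lem-LPpacking} follows the same case analysis, so this is essentially the same proof. One small slip in your sketch: in the case where both $\chi_1,\chi_2$ are nonprincipal on $G_1,G_2$ but exactly one of them lies in $U_j^\perp$, the multiset is \emph{not} all $-tc_1c_2$ --- such a $\chi$ is nonprincipal on $U_1\times U_2$, and your own cancellation (e.g.\ $\chi(R_i)=tc_1\chi_2(Q_i)$ when $\chi_1\in U_1^\perp$ and $\chi_2\notin U_2^\perp$) correctly produces one occurrence of $t(t-1)c_1c_2$, so the calculation is fine but the stated conclusion of that case should be corrected.
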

\begin{proof}
For $j=1,2$, let $\{P_{j,0},\dots,P_{j,t-1}\}$ be a $(c_j,t)$ LP-packing in $G_j$ relative to~$U_j$ and define
\[
K_\ell = P_{1,\ell} U_2 + U_1 P_{2,\ell} + \sum_{i=0}^{t-1} P_{1,i}P_{2,i+\ell} \quad \mbox{for $\ell = 0,\dots,t-1$},
\]
where the subscript $i+\ell$ is reduced modulo~$t$. We shall use Lemma~\ref{lem-LPpacking} to show that $\{K_0,K_1,\dots,$ $K_{t-1}\}$ is a $(tc_1c_2,t)$ LP-packing in $G_1 \times G_2$ relative to $U_1 \times U_2$.

Each $P_{j,i}$ is a $c_j(tc_j-1)$-subset of $G_j-U_j$, so each $K_\ell$ is a subset of $G_1 \times G_2$ of size $c_1(tc_1-1)tc_2+tc_1 c_2(tc_2-1) + \sum_{i=0}^{t-1} c_1(tc_1-1)c_2(tc_2-1) = tc_1c_2(t^2c_1c_2-1)$. Let $\chi$ be a nonprincipal character of $G_1 \times G_2$, and let $\chi_j=\chi|_{G_j}$ for $j=1,2$. Then
\[
\chi(K_\ell) = \chi_1(P_{1,\ell}) \chi_2(U_2) + \chi_1(U_1) \chi_2(P_{2,\ell}) + \sum_{i=0}^{t-1} \chi_1(P_{1,i})\chi_2(P_{2,i+\ell}) \quad \mbox{for $\ell = 0,\dots,t-1$}.
\]
By Lemma~\ref{lem-LPpacking}, it remains to prove that
\begin{equation}\label{eqn-LPprod}
\{\chi(K_0),\dots,\chi(K_{t-1})\} = \begin{cases}
  \{-tc_1c_2,\dots, -tc_1c_2\} 	    		& \mbox{if $\chi \in (U_1 \times U_2)^\perp$,} \\
  \{t(t-1)c_1c_2, -tc_1c_2, \dots, -tc_1c_2\} 	& \mbox{if $\chi \notin (U_1 \times U_2)^\perp$.}
\end{cases}
\end{equation}

For $j=1,2$, by Lemma~\ref{lem-LPpacking} we are given that
\[
\{\chi_j(P_{j,0}),\dots,\chi_j(P_{j,t-1})\} = \begin{cases}
  \{c_j(tc_j-1),\dots, c_j(tc_j-1)\} 	& \mbox{if $\chi_j \in G_j^\perp$,} \\
  \{-c_j,\dots, -c_j\} 	    		& \mbox{if $\chi_j \in U_j^\perp \sm G_j^\perp$,} \\
  \{(t-1)c_j, -c_j, \dots, -c_j\} 	& \mbox{if $\chi_j \notin U_j^\perp$.}
\end{cases}
\]
Since $\chi$ is nonprincipal on $G_1 \times G_2$, we may assume by symmetry that $\chi_2$ is nonprincipal on $G_2$. Again by symmetry, we need to consider only the following three cases, which together establish~\eqref{eqn-LPprod}.
\begin{description}
\item[{\bf Case 1:}] $\chi_1$ is principal on $U_1$ and $\chi_2$ is principal on $U_2$. \\
Then $\chi_1(P_{1,i})$ is constant over $i$ (regardless of whether $\chi_1$ is principal or nonprincipal on $G_1$), so
$\chi(K_\ell) = \chi_1(P_{1,\ell}) tc_2 + tc_1 (-c_2) + \chi_1(P_{1,\ell}) \sum_{i=0}^{t-1} (-c_2) = -tc_1c_2$ for each~$\ell$.

\item[{\bf Case 2:}] $\chi_1$ is principal on $U_1$ and $\chi_2$ is nonprincipal on $U_2$. \\
Then $\chi_1(P_{1,i})$ is again constant over $i$ and $\sum_{i=0}^{t-1} \chi_2(P_{2,i}) = 0$, so
$\chi(K_\ell) = tc_1 \chi_2(P_{2,\ell})$ which gives
$\{\chi(K_0),\dots,\chi(K_{t-1})\} = \{t(t-1)c_1c_2,-tc_1c_2,\dots,-tc_1c_2\}$.

\item[{\bf Case 3:}] $\chi_1$ is nonprincipal on $U_1$ and $\chi_2$ is nonprincipal on $U_2$. \\
Then
$\chi(K_\ell) = \sum_{i=0}^{t-1} \chi_1(P_{1,i}) \chi_2(P_{2,i+\ell})$,
so the multiset $\{\chi(K_0),\dots,\chi(K_{t-1})\}$ contains
one occurrence of the value $(t-1)c_1(t-1)c_2 + (t-1)(-c_1)(-c_2) = t(t-1)c_1c_2$, and
$t-1$ occurrences of the value $(t-1)c_1(-c_2) + (-c_1)(t-1)c_2 + (t-2)(-c_1)(-c_2) = -tc_1c_2$.
\end{description}
\end{proof}

\begin{remark}
The construction of Theorem~\ref{thm-LPpackingproduct} is closely modelled on the product construction for collections of Latin square type PDSs presented by Polhill in {\rm \cite[Thm.~2.2]{P08DCC}}.
Indeed, the configuration described in {\rm \cite[Lemma 2.1]{P08DCC}} for $e=1$ can be represented as $\{P_1,\dots,P_{t-1},P_t+U-1_G\}$, where $\{ P_1, \dots, P_t\}$ is a $(c,t)$ LP-packing in $G$ relative to~$U$ (see also Remark~\ref{rem-LPpacking}~$(i)$).
Our construction has the advantage that, by identifying the role of the avoided subgroup $U$ in the definition of an LP-packing, we are able to control the avoided subgroup $U_1 \times U_2$ in Theorem~\ref{thm-LPpackingproduct}.
\end{remark}

We extend the construction of Theorem~\ref{thm-LPpackingproduct} using Lemma~\ref{lem-LPpackingunion}.
\begin{corollary}\label{coro-LPpackingproduct}
For $j=1,2$, suppose there exists a $(c_j,t_j)$ LP-packing in an abelian group $G_j$ of order $t_j^2c_j^2$ relative to a subgroup $U_j$ of order~$t_jc_j$, and suppose that $t_1$ divides~$t_2$. Then there exists a $(t_2c_1c_2,t_1)$ LP-packing in $G_1 \times G_2$ relative to $U_1 \times U_2$.
\end{corollary}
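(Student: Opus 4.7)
The plan is to derive the corollary by first coarsening the second LP-packing so that it has the same number of parts as the first, and then applying the product construction of Theorem~\ref{thm-LPpackingproduct}. The hypothesis $t_1 \mid t_2$ is precisely what lets us carry out the first step using Lemma~\ref{lem-LPpackingunion}.

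First I would set $s = t_2/t_1$, which is a positive integer dividing $t_2$. Applying Lemma~\ref{lem-LPpackingunion} to the given $(c_2,t_2)$ LP-packing in $G_2$ relative to $U_2$, with this value of $s$, produces an $(sc_2, t_2/s)$ LP-packing in $G_2$ relative to $U_2$, that is, a $\bigl((t_2/t_1)c_2,\, t_1\bigr)$ LP-packing in $G_2$ relative to $U_2$. A quick sanity check: the order of $G_2$ is $t_2^2 c_2^2 = t_1^2 \bigl((t_2/t_1)c_2\bigr)^2$, and the order of $U_2$ is $t_2 c_2 = t_1 \cdot (t_2/t_1)c_2$, consistent with Definition~\ref{def-LPpacking}.

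Now both LP-packings have $t_1$ parts. I would apply Theorem~\ref{thm-LPpackingproduct} with common parameter $t := t_1$, taking the first LP-packing to be the $(c_1, t_1)$ LP-packing in $G_1$ relative to $U_1$ and the second to be the $\bigl((t_2/t_1)c_2,\, t_1\bigr)$ LP-packing in $G_2$ relative to $U_2$ just produced. The theorem delivers a $\bigl(t_1 \cdot c_1 \cdot (t_2/t_1)c_2,\, t_1\bigr)$ LP-packing in $G_1 \times G_2$ relative to $U_1 \times U_2$. Since $t_1 \cdot c_1 \cdot (t_2/t_1)c_2 = t_2 c_1 c_2$, this is exactly a $(t_2 c_1 c_2, t_1)$ LP-packing in $G_1 \times G_2$ relative to $U_1 \times U_2$, as required.

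There is essentially no obstacle here: the only subtlety is matching the parameters so that Theorem~\ref{thm-LPpackingproduct} applies, and the divisibility assumption $t_1 \mid t_2$ is tailored precisely so that Lemma~\ref{lem-LPpackingunion} equalizes the number of parts. The proof is a two-line application of the two preceding results.
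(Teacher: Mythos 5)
Your proposal is correct and is essentially identical to the paper's own proof: both apply Lemma~\ref{lem-LPpackingunion} with $s=t_2/t_1$ to convert the second packing into a $\bigl(\frac{t_2}{t_1}c_2,t_1\bigr)$ LP-packing and then invoke Theorem~\ref{thm-LPpackingproduct} with $t=t_1$. The parameter bookkeeping checks out, so there is nothing to add.
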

\begin{proof}
By Lemma~\ref{lem-LPpackingunion} with $s=\frac{t_2}{t_1}$, there exists a $(\frac{t_2}{t_1}c_2,t_1)$ LP-packing in $G_2$ relative to $U_2$. Then by Theorem~\ref{thm-LPpackingproduct}, there exists a $(t_2c_1c_2,t_1)$ LP-packing in $G_1 \times G_2$ relative to $U_1 \times U_2$.
\end{proof}

\section{LP-partitions}\label{sec4}

In this section, we introduce a $(c,t)$ LP-partition as an auxiliary configuration in the construction of an LP-packing in a larger group from an LP-packing in a smaller group. We then show how to construct an LP-partition from a collection of LP-partitions in various factor groups. We shall apply these two constructions recursively in Section~\ref{sec5} to produce infinite families of LP-packings.

\begin{definition}[LP-partition]\label{def-LPpartition}
Let $t>1$ and $c>0$ be integers. Let $G$ be an abelian group of order~$t^2c^2$, let $V$ be a subgroup of $G$ of order~$tc^2$, and let $H \leqslant V$.
A \emph{$(c,t)$ LP-partition in $G-V$ relative to $H$} is a collection $\{R_1, \ldots, R_t\}$ of $t$ disjoint $(t-1)c^2$-subsets of $G$ whose union is $G-V$ and for which the multiset equality
\[
\{\chi(R_1), \dots, \chi(R_t)\}= \begin{cases}
 \{-c^2,\dots, -c^2\} 	& \mbox{if $\chi \in V^{\perp}$,} \\
 \{0,\dots,0\} 		& \mbox{if $\chi \in H^{\perp} \sm V^{\perp}$,} \\
 \{(t-1)c,-c,\dots,-c\} & \mbox{if $\chi \notin H^{\perp}$}.
\end{cases}
\]
holds for all nonprincipal characters $\chi$ of~$G$.
\end{definition}

\begin{remark}\label{rem-LPpartitiondefn}
\quad
\begin{enumerate}[$(i)$]
\item
In Definition~\ref{def-LPpartition}, we can deduce that the $R_i$ are disjoint and their union is $G-V$ from the other conditions, by applying the Fourier inversion formula to the equation
\[
\chi \Big(\sum_{i=1}^t R_i\Big) = \begin{cases}
  t(t-1)c^2 	& \mbox{if $\chi \in G^\perp$}, \\
  -tc^2       	& \mbox{if $\chi \in V^\perp \sm G^\perp$}, \\
  0         	& \mbox{if $\chi \notin V^\perp$}.
\end{cases}
\]

\item
Each subset of a $(c,t)$ LP-partition in $G-V$ relative to $H$ is a $(G,V,H;c)$ Latin shell, as introduced in {\rm\cite[Defn.~3.1]{HN}} as an auxiliary configuration in the construction of a single Latin square type PDS.
\end{enumerate}
\end{remark}

A simple example of an LP-partition is given by a spread of a vector space (see Definition~\ref{def-spread}).

\begin{example}\label{ex-spread2}
By Lemma~\ref{lem-LPpacking} and Definition~\ref{def-LPpartition}, a $(1,t)$ LP-packing in $G$ relative to~$U$ is identical to a $(1,t)$ LP-partition in $G-U$ relative to~$U$.
Let $t=p^s$ for a prime $p$ and positive integer~$s$, and let $V$ be a $2s$-dimensional vector space over~$\F_p$ with identity~$1_V$.
Let $\{H_0,\dots,H_{p^s}\}$ be a spread of $V \cong \Z_p^{2s}$.
Following Example~\ref{ex-spread}, $\{H_1-1_V,\dots,H_{p^s}-1_V\}$ is therefore a $(1,p^s)$ LP-partition in $V-H_0$ relative to~$H_0$ as well as a $(1,p^s)$ LP-packing in $V$ relative to~$H_0$.
\end{example}

We next combine the subsets of an LP-partition with the subsets obtained by lifting an LP-packing, in order to produce an LP-packing in a larger group.

\begin{theorem}\label{thm-composition}
Let $G$ be an abelian group of order $t^4c^2$ containing subgroups $H \leqslant U \leqslant V \leqslant G$, where $H$, $U$, $V$ have order $t$, $t^2c$, $t^3c^2$, respectively.
Suppose there exists a $(tc,t)$ LP-partition in $G-V$ relative to $H$, and a $(c,t)$ LP-packing in $V/H$ relative to~$U/H$.
Then there exists a $(tc,t)$ LP-packing in $G$ relative to~$U$.
\end{theorem}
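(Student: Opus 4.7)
The natural plan is to build the desired LP-packing by combining each LP-partition subset with a full pre-image lift of the corresponding LP-packing subset in $V/H$. Concretely, let $\{R_1,\dots,R_t\}$ be the given $(tc,t)$ LP-partition in $G-V$ relative to $H$, and let $\{Q_1,\dots,Q_t\}$ be the given $(c,t)$ LP-packing in $V/H$ relative to $U/H$. Writing $\pi\colon V \to V/H$ for the quotient map, set
\[
P_i = R_i + \pi^{-1}(Q_i) \quad \mbox{for $i=1,\dots,t$.}
\]
A short size count gives $|P_i|=(t-1)t^2c^2 + t\cdot c(tc-1) = tc(t^2c-1)$, which is the correct cardinality of a $(t^2c,tc)$ Latin square type PDS in $G$. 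Moreover $\sum_i R_i = G-V$ and $\sum_i \pi^{-1}(Q_i) = \pi^{-1}(V/H - U/H) = V - U$, so $\sum_i P_i = G - U$.

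The core of the proof is then to verify, using Lemma~\ref{lem-LPpacking}, that $\{P_1,\dots,P_t\}$ has the prescribed character sums. Fix a nonprincipal character $\chi$ of $G$, and split into three cases according to how $\chi$ sits relative to the chain $H \leqslant U \leqslant V$. If $\chi \in V^\perp$, then $\chi$ is trivial on $V$, so $\chi(\pi^{-1}(Q_i)) = |\pi^{-1}(Q_i)| = tc(tc-1)$, while the LP-partition gives $\chi(R_i) = -t^2c^2$ for all $i$; adding, each $\chi(P_i) = -tc$, which matches the required value for $\chi \in U^\perp$. If $\chi \in H^\perp \setminus V^\perp$, then $\chi$ descends to a nonprincipal character $\bar\chi$ of $V/H$, and since $\chi$ is principal on $H$ we have $\chi(\pi^{-1}(Q_i)) = |H|\,\bar\chi(Q_i) = t\bar\chi(Q_i)$; the LP-partition contributes $\chi(R_i)=0$, so $\chi(P_i) = t\bar\chi(Q_i)$. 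Invoking Lemma~\ref{lem-LPpacking} for the LP-packing in $V/H$ and noting that $\bar\chi \in (U/H)^\perp$ exactly when $\chi \in U^\perp$, one verifies both the $\chi \in U^\perp$ and $\chi \notin U^\perp$ outcomes.

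The remaining case $\chi \notin H^\perp$ is the easiest: the nonprincipal restriction $\chi|_H$ forces $\sum_{h \in H}\chi(h) = 0$, and since $\pi^{-1}(Q_i)$ is a union of $H$-cosets we get $\chi(\pi^{-1}(Q_i)) = 0$. The LP-partition then supplies the multiset $\{(t-1)tc,-tc,\dots,-tc\}$ of values of $\chi(R_i)$, which transfers directly to $\chi(P_i)$ and matches the requirement for $\chi \notin U^\perp$ (note $H \leqslant U$, so indeed $\chi \notin U^\perp$ in this case). Collecting the three cases, Lemma~\ref{lem-LPpacking} concludes that $\{P_1,\dots,P_t\}$ is a $(tc,t)$ LP-packing in $G$ relative to $U$.

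The only point requiring genuine attention, and the place I would check twice, is the middle case $\chi \in H^\perp \setminus V^\perp$: one must be careful that the three possibilities ``$\chi$ principal on $V$'', ``$\chi$ principal on $U$ but not $V$'', and ``$\chi$ not principal on $U$'' align precisely with the three clauses in the definitions of LP-partition and LP-packing so that the character sums combine into exactly the multisets demanded by Lemma~\ref{lem-LPpacking}. The rest is a straightforward size and union computation together with the vanishing identity for characters on cosets of a subgroup on which they are nonprincipal.
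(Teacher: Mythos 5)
Your proposal is correct and follows essentially the same route as the paper: lift each $Q_i$ to its pre-image in $V$, adjoin $R_i$, and verify the multiset of character values required by Lemma~\ref{lem-LPpacking} by cases on the position of $\chi$ relative to the chain $H \leqslant U \leqslant V$. The only cosmetic difference is that the paper tabulates four character cases ($V^\perp\setminus G^\perp$, $U^\perp\setminus V^\perp$, $H^\perp\setminus U^\perp$, $\notin H^\perp$) while you merge the middle two into $H^\perp\setminus V^\perp$ and delegate the subcase split to Lemma~\ref{lem-LPpacking} applied in $V/H$; the alignment you flag as needing care does indeed work out exactly as you describe.
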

\begin{proof}
Let $\{R_1,\dots,R_t\}$ be a $(tc,t)$ LP-partition in $G-V$ relative to $H$, and let $\{P_1,\dots,P_t\}$ be a $(c,t)$ LP-packing in $V/H$ relative to~$U/H$.
For $i=1,\dots,t$, let $P'_i = \{g \in V \mid gH \in P_i\}$ be the pre-image of $P_i$ under the quotient mapping from $V$ to $V/H$. We shall use Lemma~\ref{lem-LPpacking} to show that $\{P'_1+R_1,\dots,P'_t+R_t\}$ is a $(tc,t)$ LP-packing in $G$ relative to~$U$.

Each $P_i$ is a $c(tc-1)$-subset of $V/H-U/H$ by Definition~\ref{def-LPpacking}, so
each $P'_i$ is a $tc(tc-1)$-subset of~$V-U$.
Since each $R_i$ is a $(t-1)(tc)^2$-subset of $G-V$ by Definition~\ref{def-LPpartition}, it follows that each $P'_i+R_i$ is a subset of $G$ of size
$|P'_i| + |R_i| = tc(t^2c-1)$.

By Lemma~\ref{lem-LPpacking}, for all characters $\psi$ of $V/H$ we have
\[
\{\psi(P_1),\dots,\psi(P_t)\}=
  \begin{cases}
    \{c(tc-1),\dots,c(tc-1)\} 	& \mbox{if $\psi \in (V/H)^{\perp}$,} \\
    \{-c,\dots, -c\} 		& \mbox{if $\psi \in (U/H)^{\perp} \sm (V/H)^{\perp}$,} \\
    \{(t-1)c, -c, \dots, -c\} 	& \mbox{if $\psi \notin (U/H)^{\perp}$.}
  \end{cases}
\]
Let $\chi$ be a nonprincipal character of~$G$. If $\chi$ is principal on $H$ then it induces a nonprincipal character $\psi$ on~$G/H$. Since $P'_i$ is a union of cosets of the order~$t$ subgroup $H$, we obtain
\[
\chi(P'_i) = \begin{cases}
 t\psi(P_i)	& \mbox{if $\chi \in H^\perp$}, \\
 0 		& \mbox{if $\chi \notin H^\perp$}.
\end{cases}
\]
Therefore the value of $\{\chi(P'_1),\dots,\chi(P'_t)\}$ is as specified in the second column of the following table:

\[
\ra{1.5}
\scalebox{0.85}{
$
\begin{array}{|l|c|c|c|}
\cline{2-4}
\mc{1}{l|}{}			& \{\chi(P'_1),\dots,\chi(P'_t)\}	& \{\chi(R_1),\dots,\chi(R_t)\} 	& \{\chi(P'_1+R_1),\dots,\chi(P'_t+R_t)\} 	\\ \hline
\chi \in V^\perp \sm G^\perp 	& \{tc(tc-1),\dots,tc(tc-1)\}		& \{-t^2c^2,\dots,-t^2c^2\} 		& \mr{2}{*}{$\{-tc,\dots,-tc\}$} 		\\ \cline{1-3}
\chi \in U^\perp \sm V^\perp 	& \{-tc,\dots,-tc\}			& \mr{2}{*}{$\{0,\dots,0\}$}		& 						\\ \cline{1-2} \cline{4-4}
\chi \in H^\perp \sm U^\perp 	& \{t(t-1)c,-tc,\dots,-tc\}		& 					& \mr{2}{*}{$\{t(t-1)c,-tc,\dots,-tc\}$} 	\\ \cline{1-3}
\chi \notin H^\perp 		& \{0,\dots,0\}				& \{t(t-1)c,-tc,\dots,-tc\} 		& 						\\ \hline
\end{array}
$
}
\]

By Definition~\ref{def-LPpartition}, the value of $\{\chi(R_1),\dots,\chi(R_t)\}$ is as given in the third column of the table, and the fourth column contains the sum of the second and third columns. Lemma~\ref{lem-LPpacking} then implies that
$\{P'_1+R_1,\dots,P'_t+R_t\}$ is a $(tc,t)$ LP-packing in $G$ relative to~$U$, as required.

\end{proof}

\begin{remark}\label{rem-lifting}
Lifting constructions similar to that of Theorem~\ref{thm-composition} were proposed in {\rm \cite{H00}, \cite[Sects.~4,~5]{H03}}, {\rm\cite{HN}}, each producing a single Latin square type partial difference set. These previous constructions make use of the delicate structure of finite local rings. In contrast, the construction of Theorem~\ref{thm-composition} applies simultaneously to a collection of such partial difference sets, and does not require ring theory.
\end{remark}

We now construct an LP-partition by lifting and combining the subsets of a collection of LP-partitions in various factor groups. The construction makes use of two spreads of a vector space over a finite field (as introduced in Definition~\ref{def-spread}).

\begin{theorem}\label{thm-LPpartitionrecursive}
Let $t=p^s$ for a prime $p$ and positive integer~$s$.
Let $G$ be an abelian group of order $t^4c^2$ containing subgroups $Q \leqslant G' \leqslant G$, where $Q \cong \Z_p^{2s}$ and $G/G' \cong \Z_p^{2s}$.
Let $H_0,\dots,H_t$ be subgroups of $G$ forming a spread when viewed as subgroups of~$Q$.
Let $V_0,\dots,V_t$ be subgroups of $G$ for which $\{V_0/G',\dots,V_t/G'\}$ is a spread of~$G/G'$, and for which $H_i \leqslant V_i$ for each $i=1,\dots,t$.
Suppose that for each $i = 1,\dots,t$ there exists a $(c,t)$ LP-partition in $V_i/H_i - G'/H_i$ relative to $Q/H_i$.
Then there exists a $(tc,t)$ LP-partition in $G-V_0$ relative to~$H_0$.
\end{theorem}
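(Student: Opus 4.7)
The plan is to construct the desired LP-partition by lifting the given inner LP-partitions from $V_i/H_i$ up to $V_i$, and then summing these lifts across $i = 1, \dots, t$. Concretely, for each $i \in \{1,\dots,t\}$ let $\{R_{i,1}, \dots, R_{i,t}\}$ denote the given $(c,t)$ LP-partition in $V_i/H_i - G'/H_i$ relative to $Q/H_i$; let $R'_{i,j} \subseteq V_i$ be the full preimage of $R_{i,j}$ under the quotient $V_i \to V_i/H_i$; and define
\[
R_j = \sum_{i=1}^{t} R'_{i,j} \quad \text{for } j = 1, \dots, t.
\]
Each $R'_{i,j}$ is a union of $H_i$-cosets of size $t(t-1)c^2$, and $\{R'_{i,1},\dots,R'_{i,t}\}$ partitions $V_i - G'$. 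Since $\{V_0/G', \dots, V_t/G'\}$ is a spread of $G/G'$, the sets $V_i - G'$ for $i=0,\dots,t$ partition $G-G'$; hence $\sum_{i=1}^t (V_i - G') = G - V_0$, so the $R_j$ partition $G-V_0$ and each has size $(t-1)(tc)^2$, as required by Definition~\ref{def-LPpartition}.

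The main work is verifying the character-sum multiset. Fix a nonprincipal character $\chi$ of $G$. Since $R'_{i,j}$ is a union of cosets of $H_i$, $\chi(R'_{i,j}) = 0$ unless $\chi$ is principal on $H_i$; when it is, $\chi$ induces a character $\psi_i$ on $V_i/H_i$ with $\chi(R'_{i,j}) = t\,\psi_i(R_{i,j})$, and the value $\psi_i(R_{i,j})$ is read off from Definition~\ref{def-LPpartition} applied to the inner LP-partition. Two consequences of the spread structure, via Example~\ref{ex-spread}, drive the analysis: any character of $G$ nonprincipal on $Q$ is principal on exactly one of $H_0, \dots, H_t$; and any character of $G$ principal on $G'$ but nonprincipal on $G$ is principal on exactly one of $V_0, \dots, V_t$.

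I would then split $\chi$ into the five cases $V_0^\perp$, $(G')^\perp \sm V_0^\perp$, $Q^\perp \sm (G')^\perp$, $H_0^\perp \sm Q^\perp$, and $\wh{G} \sm H_0^\perp$, which partition the set of nonprincipal characters since $V_0^\perp \subseteq (G')^\perp \subseteq Q^\perp \subseteq H_0^\perp$. In each case the $t$ terms $\chi(R'_{i,j})$ collapse to predictable contributions among $t(t-1)c^2$, $-tc^2$, and $0$, except in the last case, where only the single index $i^* \in \{1,\dots,t\}$ for which $\chi$ is principal on $H_{i^*}$ contributes, carrying the multiset $\{t(t-1)c, -tc, \dots, -tc\}$ inherited from the inner LP-partition. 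Summing then yields precisely the three multisets $\{-(tc)^2,\dots,-(tc)^2\}$, $\{0,\dots,0\}$, and $\{(t-1)(tc),-tc,\dots,-tc\}$ demanded by Definition~\ref{def-LPpartition}. The most delicate case will be $\chi \in (G')^\perp \sm V_0^\perp$: here the unique index $i_0 \ge 1$ with $\chi$ principal on $V_{i_0}$ contributes $t(t-1)c^2$, which must precisely cancel the $-tc^2$ contributions coming from the other $t-1$ indices to produce the all-zero multiset required for $H_0^\perp \sm V_0^\perp$.
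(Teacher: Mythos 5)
Your proposal is correct and follows essentially the same route as the paper's proof: the same lifting of the inner LP-partitions to preimages $S'_{ij}$, the same definition $R_j = \sum_i S'_{ij}$, the same two spread facts, and the same five-case character analysis (the paper's Cases 1--5 are exactly your split $V_0^\perp$, $(G')^\perp \sm V_0^\perp$, $Q^\perp \sm (G')^\perp$, $H_0^\perp \sm Q^\perp$, $\wh{G} \sm H_0^\perp$), including the cancellation $t(t-1)c^2 + (t-1)(-tc^2)=0$ in the delicate second case. The only cosmetic difference is that you verify the partition of $G-V_0$ directly, whereas the paper deduces it from the character sums via Remark~\ref{rem-LPpartitiondefn}~$(i)$.
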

\begin{proof}
For each $i = 1, \dots, t$, let $\{ S_{i1},\dots,S_{it}\}$ be a $(c,t)$ LP-partition in $V_i/H_i-G'/H_i$ relative to~$Q/H_i$. By Definition~\ref{def-LPpartition}, for each $i=1,\dots,t$ and for all nonprincipal characters $\psi_i$ of $V_i/H_i$ we have
\[
\{\psi_i(S_{i1}),\dots,\psi_i(S_{it})\}= \begin{cases}
 \{-c^2,\dots, -c^2\} 	   & \mbox{if $\psi_i \in (G'/H_i)^{\perp}$}, \\
 \{0,\dots,0\}        	   & \mbox{if $\psi_i \in (Q/H_i)^{\perp} \sm (G'/H_i)^{\perp}$,} \\
 \{(t-1)c, -c, \dots, -c\} & \mbox{if $\psi_i \notin (Q/H_i)^{\perp}$.}
  \end{cases}
\]
For each $i,j$ satisfying $1 \le i, j \le t$, let
$S'_{ij}=\{g \in V_i \mid gH_i \in S_{ij}\}$
be the pre-image of $S_{ij}$ under the quotient mapping from $V_i$ to~$V_i/H_i$, and let $R_j=\sum_{i=1}^t S'_{ij}$ for $j = 1,\dots,t$.
We shall show that $\{R_1,\dots,R_t\}$ is a $(tc,t)$ LP-partition in $G-V_0$ relative to~$H_0$.

By Definition~\ref{def-LPpartition},
each $S_{ij}$ is a $(t-1)c^2$-subset of $V_i/H_i-G'/H_i$, and so
each $S'_{ij}$ is a $t(t-1)c^2$-subset of~$V_i-G'$.
By Example~\ref{ex-spread}, the $V_i/G'-1_{G/G'}$ are disjoint in $G/G'$ and so
the $V_i-G'$ are disjoint in~$G$.
Therefore each $R_j = \sum_{i=1}^t S'_{ij}$ is a $t^2(t-1)c^2$-subset of~$G$.
By Definition~\ref{def-LPpartition} and Remark~\ref{rem-LPpartitiondefn}~$(i)$, it is now sufficient to show that, for all nonprincipal characters $\chi$ of~$G$,
\begin{equation}\label{eqn-Richars}
\{\chi(R_1),\dots,\chi(R_t)\} = \begin{cases}
  \{-t^2c^2,\dots, -t^2c^2\}   & \mbox{if $\chi \in V_0^\perp$}, \\
  \{0,\dots,0\} 	       & \mbox{if $\chi \in H_0^\perp \sm V_0^\perp$},\\
  \{t(t-1)c, -tc, \dots, -tc\} & \mbox{if $\chi \notin H_0^\perp$}.
\end{cases}
\end{equation}

Let $\chi$ be a nonprincipal character of~$G$.
Since the $H_i$ correspond to a spread of $Q$, if $\chi$ is nonprincipal on $Q$, then $\chi$ is principal on exactly one of the~$H_i$ (see Example~\ref{ex-spread}). If $\chi$ is nonprincipal on $V_i$ and principal on $H_i$, then $\chi$ induces a nonprincipal character $\psi_i$ on~$V_i/H_i$; since each $S'_{ij}$ is a union of cosets of the order $t$ subgroup $H_i$, we have for each $i,j$ satisfying $1 \le i,j \le t$ that
\[
\chi(S'_{ij}) = \begin{cases}
  t \psi_i(S_{ij}) & \mbox{if $\chi \in H_i^\perp \sm V_i^\perp$}, \\
  0		   & \mbox{if $\chi \notin H_i^\perp$}.
\end{cases}
\]
If $\chi$ is principal on $G'$ then it induces a nonprincipal character~$\tau$ on~$G/G'$;
since the $V_i/G'$ form a spread of $G/G'$, we then have that $\tau$ is principal on exactly one of the~$V_i/G'$ and so $\chi$ is principal on exactly one of the~$V_i$.
We shall use the subgroup inclusions
\begin{equation}\label{eqn-inclusions}
H_i \leqslant Q \leqslant G' \leqslant V_i \quad \mbox{for each $i = 0,\dots,t$}.
\end{equation}

The conclusions of the following five cases collectively establish~\eqref{eqn-Richars}.

\begin{description}

\item[{\bf Case 1:}] $\chi$ is principal on $V_0$. \\
Then by \eqref{eqn-inclusions}, $\chi$ is principal on $G'$ and on each~$H_i$.
For each $i=1,\dots,t$, therefore $\chi$ is nonprincipal on $V_i$ and principal on~$G'/H_i$.
For each $j$, we obtain
$\chi(R_j) = \sum_{i=1}^t \chi(S'_{ij}) = t \sum_{i=1}^t \psi_i(S_{ij}) = t \sum_{i=1}^t (-c^2) = -t^2c^2$.

\item[{\bf Case 2:}] $\chi$ is principal on $G'$ and nonprincipal on $V_0$. \\
Then by \eqref{eqn-inclusions}, $\chi$ is principal on each~$H_i$.
Therefore $\chi$ is principal on each $G'/H_i$ and on exactly one of $V_1,\dots,V_t$, say~$V_I$.
For each $j$, we obtain
$\chi(R_j) = \sum_{i=1}^t \chi(S'_{ij}) = |S'_{Ij}| + t \sum_{1 \le i \le t,\, i\ne I} \psi_i(S_{ij}) = t(t-1)c^2 + t \sum_{1 \le i \le t,\, i \ne I} (-c^2) = 0$.

\item[{\bf Case 3:}] $\chi$ is principal on $Q$ and nonprincipal on $G'$. \\
Then by \eqref{eqn-inclusions}, $\chi$ is principal on each~$H_i$ and nonprincipal on each~$V_i$.
Therefore $\chi$ is principal on each $Q/H_i$ and nonprincipal on each~$G'/H_i$.
For each $j$, we obtain
$\chi(R_j) = t \sum_{i=1}^t \psi_i(S_{ij}) = 0$.

\item[{\bf Case 4:}] $\chi$ is principal on $H_0$ and nonprincipal on $Q$. \\
Then for $i=1,\dots,t$, we have that $\chi$ is nonprincipal on $H_i$ and so $\chi(S'_{ij}) = 0$ for each~$j$. Therefore $\chi(R_j)=0$ for each~$j$.

\item[{\bf Case 5:}] $\chi$ is nonprincipal on $H_0$.\\
By \eqref{eqn-inclusions}, $\chi$ is nonprincipal on $Q$ and on each $V_i$.
Therefore $\chi$ is principal on exactly one of $H_1,\dots,H_t$, say $H_I$, and is nonprincipal on~$Q/H_I$.
For each $j$, we obtain
$\chi(R_j) = t \psi_I(S_{Ij})$ and so
$\{\chi(R_1),\dots,\chi(R_t)\} = \{t(t-1)c, -tc, \ldots, -tc\}$.

\end{description}

\end{proof}

The following example illustrates the use of Theorems~\ref{thm-composition} and~\ref{thm-LPpartitionrecursive} to construct a $(4,4)$ LP-packing in $\Z_4^4$ relative to each of the order~$16$ subgroups $\Z_4\times(2\Z_4)^2$ and $\Z_4^2$ and $(2\Z_4)^4$ in turn, starting from a spread of~$\F_4^2$. In Section~\ref{sec5} we shall show how to apply this procedure recursively to produce infinite families of LP-packings and LP-partitions in groups of increasing exponent and fixed rank.
\begin{example}
Let $(\F_4,\oplus,\cdot) = \{0,1,\alpha,\alpha^2\}$, where $\alpha^2=\alpha \oplus 1$.
Then a spread of $(\F_4^2,\oplus)$ is $\big\{\lan (1,0) \ran$, $\lan (0,1) \ran$, $\lan (1,1) \ran$, $\lan (\alpha,1) \ran$, $\lan (\alpha \oplus 1,1) \ran \big\}$.
Using the isomorphism from $(\F_4^2,\oplus)$ to $\Z_2^4 = \lan y_1,y_2,y_3,y_4 \ran$ given by
$(1,0) \mapsto y_1$, $(\alpha,0) \mapsto y_2$, $(0,1) \mapsto y_3$, $(0,\alpha) \mapsto y_4$,
this spread in multiplicative notation is
\begin{equation}\label{eqn-spreadZ24}
\big\{ \lan y_1, y_2 \ran, \lan y_3, y_4 \ran, \lan y_1y_3, y_2y_4 \ran, \lan y_2y_3,y_1y_2y_4 \ran, \lan y_1y_2y_3, y_1y_4 \ran \big\}.
\end{equation}
Let
\[
\begin{split}
S_1(y_1,y_2,y_3,y_4) &= y_3+y_4+y_3y_4,		\\
S_2(y_1,y_2,y_3,y_4) &= y_1y_3+y_2y_4+y_1y_2y_3y_4,	\\
S_3(y_1,y_2,y_3,y_4) &= y_2y_3+y_1y_2y_4+y_1y_3y_4,	\\
S_4(y_1,y_2,y_3,y_4) &= y_1y_2y_3+y_1y_4+y_2y_3y_4.
\end{split}
\]
Then by Examples~\ref{ex-spread} and~\ref{ex-spread2},
\begin{equation}\label{eqn-packingpartition}
\big\{ S_1(y_1,y_2,y_3,y_4), S_2(y_1,y_2,y_3,y_4), S_3(y_1,y_2,y_3,y_4), S_4(y_1,y_2,y_3,y_4) \big\}
\end{equation}
is both a $(1,4)$ LP-partition in $\lan y_1,y_2,y_3,y_4 \ran -\lan y_1, y_2 \ran \cong \Z_2^4 - \Z_2^2$ relative to $\lan y_1, y_2 \ran \cong \Z_2^2$,
and a $(1,4)$ LP-packing in $\lan y_1,y_2,y_3,y_4 \ran = \Z_2^4$ relative to $\lan y_1, y_2 \ran \cong \Z_2^2$.

Let $G=\Z_4^4 = \lan x_1,x_2,x_3,x_4 \ran$.
We firstly use Theorem~\ref{thm-LPpartitionrecursive} to construct a $(4,4)$ LP-partition in $G-\lan x_1,x_2,x_3^2,x_4^2 \ran = \Z_4^4 - \Z_4^2\times(2\Z_4)^2$
relative to~$\lan x_1^2,x_2^2 \ran \cong \Z_2^2$.
Let $Q = \lan x_1^2,x_2^2,x_3^2,x_4^2 \ran \cong \Z_2^4$,
and let $G' = \lan x_1^2,x_2^2,x_3^2,x_4^2 \ran$ so that $G/G' \cong \Z_2^4$.
With reference to \eqref{eqn-spreadZ24}, the subgroups
\[
\begin{split}
& H_0 = \lan x_1^2, x_2^2 \ran, \quad H_1 = \lan x_3^2, x_4^2 \ran, \quad H_2 = \lan x_1^2x_3^2, x_2^2x_4^2 \ran, \quad \\
& H_3 = \lan x_2^2x_3^2,x_1^2x_2^2x_4^2 \ran, \quad H_4 = \lan x_1^2x_2^2x_3^2, x_1^2x_4^2 \ran
\end{split}
\]
form a spread when viewed as subgroups of~$Q$.
Let
\[
\begin{split}
& V_0 = \lan x_1,x_2,x_3^2,x_4^2 \ran, \quad
  V_1 = \lan x_1^2,x_2^2,x_3,x_4 \ran, \quad
  V_2 = \lan x_1^2,x_2^2,x_1x_3,x_2x_4 \ran, \quad \\
& V_3 = \lan x_1^2,x_2^2,x_2x_3,x_1x_2x_4 \ran, \quad
  V_4 = \lan x_1^2,x_2^2,x_1x_2x_3,x_1x_4 \ran,
\end{split}
\]
so that, with reference to \eqref{eqn-spreadZ24},
$\{V_0/G',V_1/G',V_2/G',V_3/G',V_4/G'\}$ is a spread of $G/G'$ and $H_i \leqslant V_i$ for $i=1,2,3,4$.
Then the factor groups $V_i/H_i, G'/H_i, Q/H_i$ take the following form for $i=1,2,3,4$:
\[
\ra{1.2}
\begin{array}{r|c|c|c}
i	& V_i/H_i								& G'/H_i					& Q/H_i						\\ \hline
1	& \lan x_1^2H_1, x_2^2H_1, x_3H_1, x_4H_1 \ran \cong \Z_2^4  		& \lan x_1^2H_1, x_2^2H_1 \ran \cong \Z_2^2  	& \lan x_1^2H_1, x_2^2H_1 \ran \cong \Z_2^2 	\\
2	& \lan x_1^2H_2, x_2^2H_2, x_1x_3H_2, x_2x_4H_2 \ran \cong \Z_2^4 	& \lan x_1^2H_2, x_2^2H_2 \ran \cong \Z_2^2  	& \lan x_1^2H_2, x_2^2H_2 \ran \cong \Z_2^2 	\\
3	& \lan x_1^2H_3, x_2^2H_3, x_2x_3H_3, x_1x_2x_4H_3 \ran \cong \Z_2^4  	& \lan x_1^2H_3, x_2^2H_3 \ran \cong \Z_2^2   	& \lan x_1^2H_3, x_2^2H_3 \ran \cong \Z_2^2 	\\
4	& \lan x_1^2H_4, x_2^2H_4, x_1x_2x_3H_4, x_1x_4H_4 \ran \cong \Z_2^4 	& \lan x_1^2H_4, x_2^2H_4 \ran \cong \Z_2^2  	& \lan x_1^2H_4, x_2^2H_4 \ran \cong \Z_2^2
\end{array}
\]

Since \eqref{eqn-packingpartition} is a $(1,4)$ LP-partition in $\lan y_1,y_2,y_3,y_4 \ran - \lan y_1,y_2 \ran$ relative to $\lan y_1,y_2 \ran$,
where $\lan y_1,y_2,y_3,y_4\ran = \Z_2^4$, it follows for $i=1,2,3,4$ that the $4$ subsets of $V_i/H_i$
shown in row $i$ of the following table form a $(1,4)$ LP-partition in $V_i/H_i-G'/H_i$ relative to $Q/H_i$:
\begin{small}
\[
\ra{1.2}
\scalemath{0.83}{
\begin{array}{|l|l|l|l|}
																				   \hline
S_1(x_1^2,x_2^2,x_3,x_4)H_1		& S_2(x_1^2,x_2^2,x_3,x_4)H_1		& S_3(x_1^2,x_2^2,x_3,x_4)H_1		& S_4(x_1^2,x_2^2,x_3,x_4)H_1		\\
S_1(x_1^2,x_2^2,x_1x_3,x_2x_4)H_2	& S_2(x_1^2,x_2^2,x_1x_3,x_2x_4)H_2	& S_3(x_1^2,x_2^2,x_1x_3,x_2x_4)H_2	& S_4(x_1^2,x_2^2,x_1x_3,x_2x_4)H_2	\\
S_1(x_1^2,x_2^2,x_2x_3,x_1x_2x_4)H_3	& S_2(x_1^2,x_2^2,x_2x_3,x_1x_2x_4)H_3	& S_3(x_1^2,x_2^2,x_2x_3,x_1x_2x_4)H_3	& S_4(x_1^2,x_2^2,x_2x_3,x_1x_2x_4)H_3	\\
S_1(x_1^2,x_2^2,x_1x_2x_3,x_1x_4)H_4	& S_2(x_1^2,x_2^2,x_1x_2x_3,x_1x_4)H_4	& S_3(x_1^2,x_2^2,x_1x_2x_3,x_1x_4)H_4	& S_4(x_1^2,x_2^2,x_1x_2x_3,x_1x_4)H_4	\\ \hline
\end{array}
}
\]
\end{small}
Now regard each entry of the above table as a $12$-subset of $G$, by interpreting $H_i$ as the sum of four elements in~$G$.
By Theorem~\ref{thm-LPpartitionrecursive}, summing the entries in each column of the table then gives a $(4,4)$ LP-partition $\{R_1,R_2,R_3,R_4\}$ in $G-V_0$ relative to~$H_0$.

We next use Theorem~\ref{thm-composition} to construct a $(4,4)$ LP-packing in $G = \Z_4^4$ relative to $U = \lan x_1,x_2^2,x_3^2 \ran \cong \Z_4 \times (2\Z_4)^2$.
We have $V_0/H_0 = \lan x_1H_0, x_2H_0, x_3^2H_0, x_4^2H_0 \ran \cong \Z_2^4$ and $U/H_0 = \lan x_1H_0, x_3^2H_0 \ran \cong \Z_2^2$.
Since \eqref{eqn-packingpartition} is a $(1,4)$ LP-packing in $\lan y_1,y_2,y_3,y_4 \ran = \Z_2^4$ relative to $\lan y_1,y_2 \ran$,
it follows that
\[
\big\{
S_1(x_1,x_3^2,x_2,x_4^2)H_0,\,\,
S_2(x_1,x_3^2,x_2,x_4^2)H_0,\,\,
S_3(x_1,x_3^2,x_2,x_4^2)H_0,\,\,
S_4(x_1,x_3^2,x_2,x_4^2)H_0
\big\}
\]
is a $(1,4)$ LP-packing in $V_0/H_0$ relative to~$U/H_0$.
Regard each of these subsets as a $12$-subset of $G$, by interpreting $H_0$ as the sum of four elements in~$G$.
Then by Theorem~\ref{thm-composition},
\[
\begin{split}
\big\{
R_1+ S_1(x_1,x_3^2,x_2,x_4^2)H_0,\,\,
  R_2+ S_2(x_1,x_3^2,x_2,x_4^2)H_0,\,\, \\
  R_3+ S_3(x_1,x_3^2,x_2,x_4^2)H_0,\,\,
  R_4+ S_4(x_1,x_3^2,x_2,x_4^2)H_0
\big\}
\end{split}
\]
is a $(4,4)$ LP-packing in $G$ relative to~$U$.

Using the same $(4,4)$ LP-partition $\{R_1,R_2,R_3,R_4\}$, we can similarly use
Theorem~\ref{thm-composition} to construct a $(4,4)$ LP-packing in $G$ relative to the subgroup $U' = \lan x_1,x_2 \ran \cong \Z_4^2$ and to $U'' = \lan x_1^2,x_2^2,x_3^2,x_4^2 \ran \cong (2\Z_4)^4$.
Following the above procedure, and using
$U'/H_0 = \lan x_1H_0, x_2H_0 \ran \cong \Z_2^2$ and
$U''/H_0 = \lan x_3^2H_0, x_4^2H_0 \ran \cong \Z_2^2$, shows that
\[
\begin{split}
\big\{
  R_1+ S_1(x_1,x_2,x_3^2,x_4^2)H_0,\,\,
  R_2+ S_2(x_1,x_2,x_3^2,x_4^2)H_0,\,\, \\
  R_3+ S_3(x_1,x_2,x_3^2,x_4^2)H_0,\,\,
  R_4+ S_4(x_1,x_2,x_3^2,x_4^2)H_0
\big\}
\end{split}
\]
is a $(4,4)$ LP-packing in $G$ relative to~$U'$, and that
\[
\begin{split}
\big\{
  R_1+ S_1(x_3^2,x_4^2,x_1,x_2)H_0,\,\,
  R_2+ S_2(x_3^2,x_4^2,x_1,x_2)H_0,\,\, \\
  R_3+ S_3(x_3^2,x_4^2,x_1,x_2)H_0,\,\,
  R_4+ S_4(x_3^2,x_4^2,x_1,x_2)H_0
\big\}
\end{split}
\]
is a $(4,4)$ LP-packing in $G$ relative to~$U''$.
\end{example}

\section{Recursive Construction of LP-partitions and LP-packings}\label{sec5}

In this section, we recursively construct infinite families of LP-partitions and LP-packings, as shown schematically in Figure~\ref{fig-ladder}.
On the right side of the figure, we iteratively use Theorem~\ref{thm-LPpartitionrecursive} to produce an LP-partition in the group $\Z_{p^a}^{2s} - \Z_{p^a}^s \times (p\Z_{p^a})^s$ relative to $H_0 \cong \Z_p^s$ by lifting with respect to $H_i \cong \Z_p^s$ an LP-partition in a factor group, for each $i=1,\dots,p^s$ (see Theorem~\ref{thm-LPpartitionuniform}).
On the left side of the figure, we iteratively use Theorem~\ref{thm-composition} to produce an LP-packing in $\Z_{p^a}^{2s}$ relative to an order $p^{as}$ subgroup, by lifting with respect to~$H_0$ an LP-packing in a factor group and taking the union of the resulting subsets with the subsets of the LP-partition at the same height (see Theorem~\ref{thm-LPpackinguniform}).

\begin{figure}[ht]
\centering

\setlength{\unitlength}{1mm}
\begin{picture}(150,130)(10,5)

\put(50,5){\makebox(0,0){$(1,p^s)$ LP-packing in $\Z_p^{2s}$}}
\put(50,0){\makebox(0,0){relative to order $p^s$ subgroup}}
\put(50,10){\vector(0,1){25}}
\put(22,22.5){\makebox(0,0){Lift using $H_0$}}
\put(7,15){\framebox(30,15){}}

\put(125,5){\makebox(0,0){$(1,p^s)$ LP-partition in}}
\put(125,0){\makebox(0,0){$\Z_{p}^{2s}-\Z_p^s$ relative to $H_0 \cong \Z_p^s$}}
\put(120,10){\vector(0,1){25}}
\put(150,25){\makebox(0,0){Lift using}}
\put(150,20){\makebox(0,0){$H_1,\dots,H_{p^s}$}}
\put(135,15){\framebox(30,15){}}

\put(50,45){\makebox(0,0){$(p^s,p^s)$ LP-packing in $\Z_{p^2}^{2s}$}}
\put(50,40){\makebox(0,0){relative to order $p^{2s}$ subgroup}}
\put(50,50){\vector(0,1){25}}
\put(22,62.5){\makebox(0,0){Lift using $H_0$}}
\put(7,55){\framebox(30,15){}}

\put(125,45){\makebox(0,0){$(p^s,p^s)$ LP-partition in}}
\put(125,40){\makebox(0,0){$\Z_{p^2}^{2s}-\Z_{p^2}^s \times (p\Z_{p^2})^s$ relative to $H_0 \cong \Z_p^s$}}
\put(120,50){\vector(0,1){25}}
\put(105,45){\vector(-1,0){30}}
\put(150,65){\makebox(0,0){Lift using}}
\put(150,60){\makebox(0,0){$H_1,\dots,H_{p^s}$}}
\put(135,55){\framebox(30,15){}}

\put(50,85){\makebox(0,0){$(p^{2s},p^s)$ LP-packing in $\Z_{p^3}^{2s}$}}
\put(50,80){\makebox(0,0){relative to order $p^{3s}$ subgroup}}
\put(50,90){\vector(0,1){25}}
\put(22,102.5){\makebox(0,0){Lift using $H_0$}}
\put(7,95){\framebox(30,15){}}

\put(125,85){\makebox(0,0){$(p^{2s},p^s)$ LP-partition in}}
\put(125,80){\makebox(0,0){$\Z_{p^3}^{2s}-\Z_{p^3}^s \times (p\Z_{p^3})^s$ relative to $H_0 \cong \Z_p^s$}}
\put(120,90){\vector(0,1){25}}
\put(105,85){\vector(-1,0){30}}
\put(150,105){\makebox(0,0){Lift using}}
\put(150,100){\makebox(0,0){$H_1,\dots,H_{p^s}$}}
\put(135,95){\framebox(30,15){}}

\put(50,120){\circle*{1}}
\put(50,125){\circle*{1}}

\put(120,120){\circle*{1}}
\put(120,125){\circle*{1}}

\end{picture}

\vspace{1cm}
\caption{Recursive construction of LP-packings and LP-partitions
according to Theorems~\ref{thm-LPpartitionuniform} and~\ref{thm-LPpackinguniform}}
\label{fig-ladder}
\end{figure}
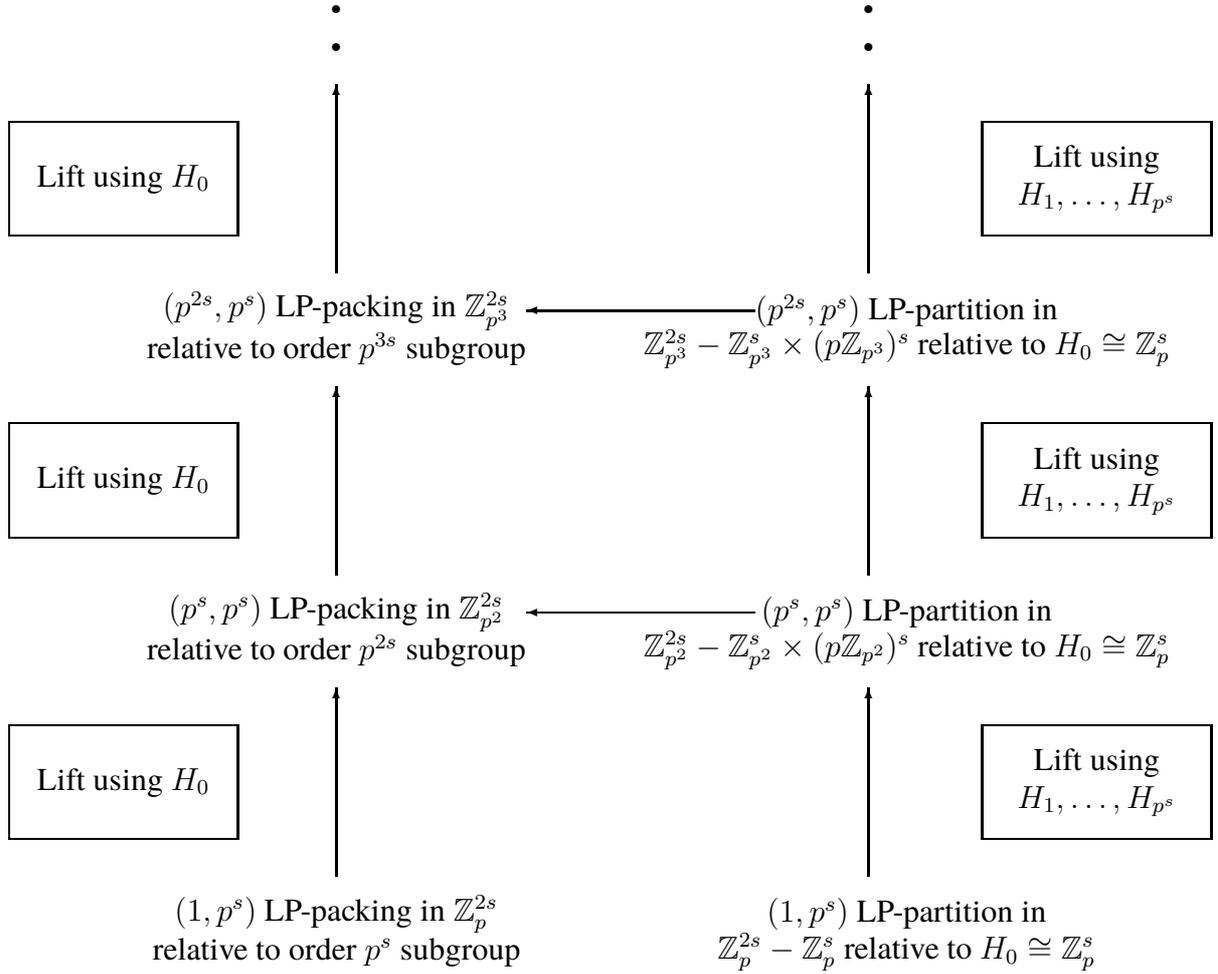
We begin with a technical lemma.

\begin{lemma}\label{lem-quotient}
Let $p$ be prime, and let $s$ and $a_1, \dots, a_{2s}$ be positive integers.
Let $G = \prod_{j=1}^{2s} \Z_{p^{a_j}}$, and let $Q$ be the unique subgroup of $G$ isomorphic to~$\Z_{p}^{2s}$. Let $H_0,\dots,H_{p^s}$ be a spread of~$Q$, where (without loss of generality) $H_0$ is contained in the first $s$ direct factors of~$G$. Then for each $i = 1, \dots, p^s$, we have
$G/H_i \cong \prod_{j=1}^s \Z_{p^{a_j}} \times \prod_{j=s+1}^{2s} \Z_{p^{a_j-1}}$, and the quotient group $Q/H_i$ is contained in the first $s$ direct factors of $G/H_i$.
\end{lemma}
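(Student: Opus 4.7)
Working additively with generators $e_1, \dots, e_{2s}$ of $G$ (so $\mathrm{ord}(e_j) = p^{a_j}$), the unique order-$p$ subgroup of each cyclic factor is $\langle p^{a_j-1}e_j\rangle$, hence $Q = \bigoplus_{j=1}^{2s}\langle p^{a_j-1}e_j\rangle$. Put $Q_0 = \bigoplus_{j=1}^s\langle p^{a_j-1}e_j\rangle$ and $Q_1 = \bigoplus_{j=s+1}^{2s}\langle p^{a_j-1}e_j\rangle$, so $Q = Q_0 \oplus Q_1$. The hypothesis that $H_0$ lies in the first $s$ direct factors, combined with $H_0 \leqslant Q$, yields $H_0 \leqslant Q_0$; since $|H_0| = p^s = |Q_0|$, we get $H_0 = Q_0$. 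For $i \neq 0$, the spread property $H_i \cap H_0 = \{0\}$ forces $H_i \cap Q_0 = 0$, so the projection $Q = Q_0 \oplus Q_1 \to Q_1$ restricts to an $\F_p$-linear isomorphism $H_i \to Q_1$. Consequently $H_i$ admits generators
\[
h_j^{(i)} = p^{a_{s+j}-1}e_{s+j} + c_j^{(i)}, \quad j=1,\dots,s,
\]
with $c_j^{(i)} \in Q_0$.

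I would next exhibit a new direct-sum decomposition of $G$ in which $H_i$ becomes the $p$-torsion of the last $s$ factors. Expanding $c_j^{(i)} = \sum_{k=1}^s \beta_{jk}^{(i)} p^{a_k-1}e_k$ with $\beta_{jk}^{(i)} \in \{0,\dots,p-1\}$, choose $\tilde c_j^{(i)} \in \bigoplus_{k=1}^s\langle e_k\rangle$ satisfying $p^{a_{s+j}-1}\tilde c_j^{(i)} = c_j^{(i)}$, namely $\tilde c_j^{(i)} = \sum_{k=1}^s \beta_{jk}^{(i)} p^{a_k - a_{s+j}}e_k$. Setting $f_k = e_k$ for $k \leqslant s$ and $f_{s+j} = e_{s+j} + \tilde c_j^{(i)}$ for $j \leqslant s$, the change-of-basis matrix is lower triangular with ones on the diagonal, so $\{f_1,\dots,f_{2s}\}$ also generates $G$ with $\mathrm{ord}(f_k)=p^{a_k}$ and $G = \bigoplus_{k=1}^{2s}\langle f_k\rangle$. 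By construction, $h_j^{(i)} = p^{a_{s+j}-1}(e_{s+j} + \tilde c_j^{(i)}) = p^{a_{s+j}-1}f_{s+j}$, so $H_i = \bigoplus_{j=1}^s\langle p^{a_{s+j}-1}f_{s+j}\rangle$ is exactly the $p$-torsion of the last $s$ direct factors in the new basis. Taking the quotient then immediately gives $G/H_i \cong \bigoplus_{k=1}^s \Z_{p^{a_k}} \oplus \bigoplus_{j=1}^s \Z_{p^{a_{s+j}-1}}$.

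For the second claim, $p^{a_{s+j}-1}e_{s+j} = h_j^{(i)} - c_j^{(i)} \in H_i + Q_0$, so $Q_1 \subseteq H_i + Q_0$ and hence $Q = Q_0 + H_i$. Therefore $Q/H_i \cong Q_0 / (Q_0 \cap H_i) = Q_0$, whose image in $G/H_i$ lies in $\bigoplus_{k=1}^s\langle \bar f_k\rangle$, the first $s$ direct factors of $G/H_i$. The main obstacle is the well-definedness of $\tilde c_j^{(i)}$: the expression $\tilde c_j^{(i)} = \sum_k \beta_{jk}^{(i)} p^{a_k - a_{s+j}}e_k$ only makes sense when $a_k \geqslant a_{s+j}$ for every $k$ with $\beta_{jk}^{(i)} \neq 0$. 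This is guaranteed by the ordering convention implicit in the ``without loss of generality'' clause (placing the larger exponents in the first $s$ slots); in the recursive framework of Section~\ref{sec5}, where this lemma is invoked with $a_1 = \dots = a_{2s}$, the condition holds automatically.
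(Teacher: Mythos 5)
Your proof follows the same strategy as the paper's: replace the given cyclic decomposition of $G$ by a new one that fixes the first $s$ factors and perturbs the last $s$, so that $H_i$ becomes precisely the socle of the last $s$ factors, after which both conclusions drop out. The paper compresses this into the single sentence ``since $Q=H_0H_i$, we can choose a set of generators for $Q$ (depending on $i$) so that $H_i$ is contained in the last $s$ direct factors of $Q$'' and then immediately asserts the isomorphism type of $G/H_i$; the unstated step is lifting the new generators of the socle $Q$ to a new cyclic decomposition of all of $G$, and that lift is exactly what your elements $f_{s+j}=e_{s+j}+\tilde c_j^{(i)}$ supply. Your verification that the change of generators is legitimate is sound: the map $e_k\mapsto f_k$ is $\mathrm{id}+\nu$ with $\nu^2=0$ and $p^{a_{s+j}}\tilde c_j^{(i)}=0$, so it is an automorphism of $G$ preserving the orders $p^{a_k}$.

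The obstruction you flag at the end is not a cosmetic worry: without $a_k\ge a_{s+j}$ for the relevant indices the lift $\tilde c_j^{(i)}$ does not exist, and in fact the lemma is false as stated for general $a_1,\dots,a_{2s}$. Take $s=1$, $p=2$, $G=\Z_2\times\Z_4=\langle e_1\rangle\times\langle e_2\rangle$ (so $a_1=1$, $a_2=2$), $Q=\langle e_1\rangle\times\langle 2e_2\rangle$, and the spread $H_0=\langle e_1\rangle$, $H_1=\langle 2e_2\rangle$, $H_2=\langle e_1+2e_2\rangle$. Then $H_0$ lies in the first direct factor, but $e_2+H_2$ has order $4$ in $G/H_2$, so $G/H_2\cong\Z_4$ rather than $\Z_{2^{a_1}}\times\Z_{2^{a_2-1}}=\Z_2\times\Z_2$; equivalently, no cyclic decomposition of $\Z_2\times\Z_4$ places $e_1+2e_2$ in the order-$4$ factor, since $2G=\{0\}\times2\Z_4$ does not contain it. So the lemma (and the paper's own proof) implicitly requires something like $\min_{k\le s}a_k\ge\max_{j>s}a_j$; your suggestion that this is ``implicit in the without-loss-of-generality clause'' is generous, since the statement nowhere orders the $a_j$, and the WLOG plainly refers only to which $s$ factors contain $H_0$. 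As you observe, the sole invocation of the lemma (in the proof of Theorem~\ref{thm-LPpartitionuniform}) has $a_1=\dots=a_{2s}$, where the condition is automatic and your argument is complete. In short: your proof is correct in every case the paper actually uses, and your careful execution exposes a missing hypothesis that the paper's one-line argument papers over.
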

\begin{proof}
Fix $i \in \{1,\dots,p^s\}$.
Since $Q=H_0H_i$, we can choose a set of generators for $Q$ (depending on $i$) so that $H_i$ is contained in the last $s$ direct factors of~$Q$. Therefore $G/H_i \cong \prod_{j=1}^s \Z_{p^{a_j}} \times \prod_{j=s+1}^{2s} \Z_{p^{a_j-1}}$. Let the first $s$ direct factors of~$G$ be $\lan x_1,\dots,x_s \ran$, so that the first $s$ direct factors of $G/H_i$ are $\lan x_1H_i,\dots,x_sH_i \ran$.
Since $H_0 = \lan x_1^{p^{a_1-1}}, \dots, x_s^{p^{a_s-1}} \ran$, we have $Q/H_i=\lan x_1^{p^{a_1-1}}H_i,\dots,x_s^{p^{a_s-1}}H_i \ran$. Therefore $Q/H_i$ is contained in the first $s$ direct factors of~$G/H_i$.
\end{proof}

We now apply Theorem~\ref{thm-LPpartitionrecursive} iteratively to produce an infinite family of LP-partitions.

\begin{theorem}\label{thm-LPpartitionuniform}
Let $p$ be prime, let $a,s$ be positive integers, and let $G=\Z_{p^a}^{2s}$.
Let $H_0$ and $V_0$
be the unique subgroups of $G$ for which $H_0 \cong \Z_p^s$ is contained in the first $s$ direct factors of $G$ and $V_0 \cong \Z_{p^a}^s \times \Z_{p^{a-1}}^s$ satisfies $V_0/H_0 \cong \Z_{p^{a-1}}^{2s}$.
Then there exists a $(p^{(a-1)s},p^s)$ LP-partition in $G-V_0$ relative to~$H_0$.
\end{theorem}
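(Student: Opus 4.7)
I will prove the theorem by induction on~$a$. The base case $a=1$ is immediate: then $V_0=H_0\cong\Z_p^s$, and a spread $\{H_0,H_1,\dots,H_{p^s}\}$ of $G=\Z_p^{2s}$ supplies the required $(1,p^s)$ LP-partition $\{H_1-1_G,\dots,H_{p^s}-1_G\}$ in $G-H_0$ relative to~$H_0$, as observed in Example~\ref{ex-spread2}.

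For the inductive step ($a\ge 2$), the plan is to apply Theorem~\ref{thm-LPpartitionrecursive} with $t=p^s$ and $c=p^{(a-2)s}$, so that $|G|=p^{2as}=t^4c^2$. Take $Q=G^{p^{a-1}}\cong\Z_p^{2s}$ (the socle of~$G$) and $G'=G^p$, so that $G/G'\cong\Z_p^{2s}$. The map $\phi\colon G/G'\to Q$ sending $gG'\mapsto g^{p^{a-1}}$ is a well-defined isomorphism, and a direct computation gives $V_0^{p^{a-1}}=H_0$, so $\phi(V_0/G')=H_0$. Extend $H_0$ to a spread $\{H_0,H_1,\dots,H_{p^s}\}$ of~$Q$, and for each $i\ge 1$ let $V_i$ be the preimage in~$G$ of the subspace $\phi^{-1}(H_i)$ of~$G/G'$. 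Then $\{V_i/G' \mid 0\le i\le p^s\}$ is the image under $\phi^{-1}$ of the spread of~$Q$, hence is itself a spread of~$G/G'$; and $H_i\leqslant Q\leqslant G'\leqslant V_i$ holds for every~$i$, so the spread hypotheses of Theorem~\ref{thm-LPpartitionrecursive} are met.

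The crux is to verify, for each $i\in\{1,\dots,p^s\}$, that the triple $(V_i/H_i,\,G'/H_i,\,Q/H_i)$ can play the role of $(G,\,V_0,\,H_0)$ at level~$a-1$ in the inductive hypothesis. First, $V_i/H_i\cong\Z_{p^{a-1}}^{2s}$: its order is $p^{(2a-2)s}$; its exponent divides $p^{a-1}$ because $V_i^{p^{a-1}}=\phi(V_i/G')=H_i$ by the deliberate coupling of the two spreads; and its $p$-rank is at most $2s$ since it embeds into~$G/H_i$, which by Lemma~\ref{lem-quotient} has $p$-rank~$2s$. Combining these three constraints leaves only the isomorphism type~$\Z_{p^{a-1}}^{2s}$. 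Next, since $G'=G^p$ with $H_i\leqslant G'$, we have $G'/H_i=(G/H_i)^p\cong\Z_{p^{a-1}}^s\times\Z_{p^{a-2}}^s$ and $Q/H_i=(G'/H_i)^{p^{a-2}}$, which matches the placement of $(V_0,H_0)$ inside $\Z_{p^{a-1}}^{2s}$ up to an automorphism of~$V_i/H_i$. The inductive hypothesis then supplies a $(p^{(a-2)s},p^s)$ LP-partition in $V_i/H_i-G'/H_i$ relative to~$Q/H_i$ for each $i\ge 1$, and Theorem~\ref{thm-LPpartitionrecursive} delivers the desired $(p^{(a-1)s},p^s)$ LP-partition in $G-V_0$ relative to~$H_0$. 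The main obstacle is precisely this structural matching of the triple $(V_i/H_i,G'/H_i,Q/H_i)$; once the two spreads are linked through~$\phi$, the rest of the induction is mechanical.
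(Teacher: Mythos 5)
Your proof is correct and follows essentially the same route as the paper: induction on $a$ with base case Example~\ref{ex-spread2}, and an inductive step that feeds two linked spreads (one of $Q$, one of $G/G'$) into Theorem~\ref{thm-LPpartitionrecursive}. The only real difference is presentational: the paper pushes a spread of $G/G'$ forward to $Q$ via an isomorphism $\phi$ with $\phi(V_0/G')=H_0$ and leans on Lemma~\ref{lem-quotient}, whereas you pull a spread of $Q$ back via the explicit $(p^{a-1})$-power map, which makes the identity $V_i^{p^{a-1}}=H_i$ --- and hence the key isomorphism $V_i/H_i\cong\Z_{p^{a-1}}^{2s}$ --- completely transparent, a cleaner justification of the step the paper records simply as ``$V_i/H_i\cong G'$''.
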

\begin{proof}
The proof is by induction on $a \ge 1$.  The case $a=1$ is given by Example~\ref{ex-spread2}. Suppose that all cases up to $a-1 \ge 1$ are true.
Let $Q$ and $G'$ be the unique subgroups of $G$ for which $Q \cong \Z_p^{2s}$ and $G' \cong \Z_{p^{a-1}}^{2s}$.
Since $G/G' \cong \Z_p^{2s}$, by Example~\ref{ex-spread} there exists a spread in $G/G'$ one of whose elements we may take to be~$V_0/G'$. Let the remaining elements of this spread be $V_1/G',\dots,V_{p^s}/G'$, where $V_1,\dots,V_{p^s}$ are subgroups of~$G$ containing $G'$.
Let $\phi$ be a group isomorphism from $G/G'$ to $Q$ for which $\phi(V_0/G') = H_0$, and let $H_i = \phi(V_i/G')$ for $i = 1,\dots,p^s$.
Since $\{V_0/G',\dots,V_{p^s}/G'\}$ is a spread of $G/G'$, it follows that $\{H_0,\dots,H_{p^s}\}$ is a spread of~$Q$.
Furthermore, for each $i$ we have $H_i \leqslant Q \leqslant G' \leqslant V_i$ and therefore $H_i$ is a subgroup of $V_i$, and $V_i/H_i \cong G'$.

Let $i \in \{1,\dots,p^s\}$.
By construction, we have $H_i \leqslant V_i$ and $V_i/H_i \cong G' \cong \Z_{p^{a-1}}^{2s}$.
By Lemma~\ref{lem-quotient}, we have $G'/H_i \cong \Z_{p^{a-1}}^s \times \Z_{p^{a-2}}^s$ and $Q/H_i \cong \Z_p^s$, and $Q/H_i$ is contained in the first $s$ direct factors of~$V_i/H_i$. We also have $(G'/H_i)/(Q/H_i) \cong G'/Q \cong \Z_{p^{a-2}}^{2s}$. Therefore by the inductive hypothesis there exists a $(p^{(a-2)s},p^s)$ LP-partition in $V_i/H_i - G'/H_i$ relative to $Q/H_i$. Then by Theorem~\ref{thm-LPpartitionrecursive} there exists a $(p^s \cdot p^{(a-2)s},p^s)$ LP-partition in $G-V_0$ relative to~$H_0$, so the case $a$ is true.
\end{proof}

We now apply Theorem~\ref{thm-composition} iteratively, making use of the LP-partitions constructed in Theorem~\ref{thm-LPpartitionuniform}, to produce an infinite family of LP-packings in groups of increasing exponent relative to an arbitrary subgroup of the appropriate order. The initial LP-packing in an elementary abelian group is obtained from a spread.

\begin{theorem}\label{thm-LPpackinguniform}
Let $p$ be prime, and let $a,s$ be positive integers. Let $G=\Z_{p^a}^{2s}$ and let $U$ be a subgroup of $G$ of order~$p^{as}$. Then there exists a $(p^{(a-1)s},p^s)$ LP-packing in $G$ relative to~$U$.
\end{theorem}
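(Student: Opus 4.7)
The plan is to proceed by induction on $a$, using Theorem~\ref{thm-composition} together with Theorem~\ref{thm-LPpartitionuniform} as the inductive engine. For the base case $a=1$, given any subgroup $U$ of order $p^s$ in $G=\Z_p^{2s}$, transitivity of $GL_{2s}(\F_p)$ on $s$-dimensional subspaces of $\F_p^{2s}$ allows $U$ to play the role of $H_0$ in a spread of $G$ (cf.\ Example~\ref{ex-spread}), and the remaining $p^s$ components then form, as in Example~\ref{ex-spread2}, the required $(1,p^s)$ LP-packing in $G$ relative to~$U$.

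For the inductive step, assume the theorem for $a-1$ and let $U \leqslant G=\Z_{p^a}^{2s}$ satisfy $|U|=p^{as}$. Setting $t=p^s$ and $c=p^{(a-2)s}$, Theorem~\ref{thm-composition} reduces the problem to exhibiting subgroups $H \leqslant U \leqslant V \leqslant G$ of orders $p^s,\,p^{as},\,p^{(2a-1)s}$ respectively, with $V\cong\Z_{p^a}^s\times\Z_{p^{a-1}}^s$ and therefore $V/H\cong\Z_{p^{a-1}}^{2s}$, together with (i) a $(p^{(a-1)s},p^s)$ LP-partition in $G-V$ relative to~$H$, and (ii) a $(p^{(a-2)s},p^s)$ LP-packing in $V/H$ relative to~$U/H$. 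The main obstacle is the simultaneous choice of $H$ and $V$ compatible with the arbitrary~$U$.

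The key is to examine the subgroup $U^{p^{a-1}}=\{u^{p^{a-1}}\mid u\in U\}$ of the socle $Q=G[p]\cong\Z_p^{2s}$. Writing $U\cong\prod_i\Z_{p^{\lambda_i}}$ with $\lambda_i\leq a$ and $\sum_i\lambda_i=as$, the $\F_p$-dimension $r'$ of $U^{p^{a-1}}$ equals the number of invariant factors with $\lambda_i=a$, so $r'a\leq \sum_i\lambda_i=as$ yields $r'\leq s$. The socle $U[p]$ has $\F_p$-dimension $r$ equal to the total number of invariant factors of~$U$, and $ra\geq\sum_i\lambda_i=as$ yields $r\geq s$. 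I would therefore choose $H$ to be any $s$-dimensional $\F_p$-subspace of $U[p]$ containing~$U^{p^{a-1}}$ and set $V=\{g\in G\mid g^{p^{a-1}}\in H\}$, so that $H\leqslant U\leqslant V$ automatically. Since $GL_{2s}(\Z/p^a\Z)$ acts transitively (via reduction mod~$p$) on $s$-dimensional subspaces of~$Q$, a change of basis of $G$ sends $H$ to the standard subgroup $\lan x_1^{p^{a-1}},\ldots,x_s^{p^{a-1}}\ran$ occupying the first $s$ direct factors; under the same change $V$ becomes $\lan x_1,\ldots,x_s,\,x_{s+1}^p,\ldots,x_{2s}^p\ran\cong\Z_{p^a}^s\times\Z_{p^{a-1}}^s$, with $V/H\cong\Z_{p^{a-1}}^{2s}$.

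With $H$ and $V$ in place, Theorem~\ref{thm-LPpartitionuniform} supplies (i), while the inductive hypothesis applied to $V/H\cong\Z_{p^{a-1}}^{2s}$ and its order-$p^{(a-1)s}$ subgroup $U/H$ supplies (ii). Theorem~\ref{thm-composition} then assembles these into the desired $(p^{(a-1)s},p^s)$ LP-packing in $G$ relative to~$U$, completing the induction. The critical step is the dimension count in the previous paragraph, which secures an $H$ satisfying the dual requirements $H\leqslant U$ and $U^{p^{a-1}}\leqslant H$ for every admissible~$U$; once this is done, the rest is a direct application of the machinery of Sections~\ref{sec3} and~\ref{sec4}.
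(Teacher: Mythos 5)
Your proposal is correct and follows essentially the same route as the paper: induction on $a$, with the base case coming from a spread and the inductive step assembling the LP-partition supplied by Theorem~\ref{thm-LPpartitionuniform} with a lifted LP-packing from the inductive hypothesis via Theorem~\ref{thm-composition}. The only difference is in the bookkeeping: where the paper first chooses $V_0 \supseteq U$ and then extracts $H_0 \leqslant U$, you choose $H$ first via the chain $U^{p^{a-1}} \leqslant H \leqslant U[p]$ (justified by your invariant-factor dimension count) and define $V$ as the preimage of $H$ under $g \mapsto g^{p^{a-1}}$ --- an equivalent, and if anything more explicit, way of producing the same pair of subgroups.
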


\begin{proof}
The proof is by induction on $a \ge 1$. The case $a=1$ follows from the construction of a spread in Example~\ref{ex-spread}, because by a suitable choice of generators of $G = \Z_p^{2s}$ we may assume that the element $H_0$ of the spread is~$U$.

Suppose that all cases up to $a-1 \ge 1$ are true.
Since $U$ is a subgroup of $G$ of order $p^{as}$, it has at most $s$ direct factors isomorphic to~$\Z_{p^a}$ and has rank at least~$s$. We may therefore choose $V_0 \cong \Z_{p^{a}}^{s} \times \Z_{p^{a-1}}^{s}$ for which $U \leqslant V_0 \leqslant G$, and also assume that the subgroup of $V_0$ isomorphic to $\Z_{p^a}^s$ is contained in the first $s$ direct factors of~$G$
and has intersection of rank $s$ with~$U$.
Therefore $U$ contains a subgroup $H_0 \cong \Z_p^s$ for which $H_0$ is contained in the first $s$ direct factors of~$G$.
We then have $V_0/H_0 \cong \Z_{p^{a-1}}^{2s}$ and $|U/H_0| = p^{(a-1)s}$.

Then by Theorem~\ref{thm-LPpartitionuniform}, there exists a $(p^{(a-1)s},p^s)$ LP-partition in $G-V_0$ relative to $H_0$. By the inductive hypothesis, there exists a $(p^{(a-2)s},p^s)$ LP-packing in $V_0/H_0$ relative to~$U/H_0$. Therefore by Theorem~\ref{thm-composition}, there exists a $(p^{(a-1)s},p^s)$ LP-packing in $G$ relative to~$U$, so the case $a$ is true.
\end{proof}

The construction process illustrated in Figure~\ref{fig-ladder} shares many features with the recursive construction of difference sets from relative difference sets introduced in~\cite{DJ}, which is illustrated in \cite{DJ-nato} using an analogous representation to Figure~\ref{fig-ladder}. In both settings, an ingredient on the left side is lifted from a factor group and combined with an ingredient on the right side to form a larger example on the left side; and multiple instances of an ingredient on the right side are lifted from factor groups and combined to form a larger example on the right side.
The ingredients in \cite{DJ} corresponding to an LP-packing and an LP-partition are, respectively, a covering extended building set for constructing a difference set, and a building set for constructing a relative difference set.
However, in \cite{DJ} the ingredients are placed into distinct cosets of a subgroup and so can be combined without regard to their intersections as subsets of the subgroup.
In contrast, in the construction of Theorem~\ref{thm-composition}, the lifted subset $P'_j$ of an LP-packing is combined with the subset $R_j$ of an LP-partition by taking their union.
Likewise, in the construction of Theorem~\ref{thm-LPpartitionrecursive}, the subsets of the $i^{\rm th}$ LP-partition are lifted to $S'_{i1},\dots,S'_{it}$, and then
subset $R_j$ of the new LP-partition is formed as the union of $S'_{1j}, \dots, S'_{tj}$ (with one lifted subset derived from each of the $t$ LP-partitions).
This leads to an additional constraint, that the subsets $P'_j$ and $S'_{1j}, \dots, S'_{tj}$ must be disjoint for each~$j$.

We now extend Theorem~\ref{thm-LPpackinguniform} using constructions from Section~\ref{sec3} in order to obtain the result stated in Theorem~\ref{thm-preview}.

\begin{corollary}\label{coro-LPpackingeven}
Let $p$ be prime, let $s_1,\dots,s_v$ be nonnegative integers (not all zero), and let $m=\min\{s_i \mid s_i>0\}$. For each $i=1,\dots,v$, let $G_i=\Z_{p^i}^{2s_i}$ and let $U_i$ be a subgroup of $G_i$ of order~$p^{is_i}$.
Let $G = \prod_{i=1}^v G_i$, let $U = \prod_{i=1}^v U_i$, and let $n = \sqrt{|G|}$.
Then for each $j = 0,\dots,m-1$, there exists an $\big(\frac{n}{p^{m-j}}, p^{m-j}\big)$ LP-packing in $G$ relative to~$U$.
\end{corollary}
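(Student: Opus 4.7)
The plan is to build the required LP-packings by first constructing a separate LP-packing in each nontrivial factor $G_i$ via the recursive construction, unifying the value of $t$ across factors, combining them with the product construction, and finally adjusting $t$ downward to obtain the full range of $j$. The three ingredients to be assembled are Theorem~\ref{thm-LPpackinguniform}, Lemma~\ref{lem-LPpackingunion}, and Theorem~\ref{thm-LPpackingproduct}.

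Let $I = \{i \mid s_i > 0\}$, which is nonempty by hypothesis. For each $i \in I$, Theorem~\ref{thm-LPpackinguniform} with $a = i$ and $s = s_i$ produces a $(p^{(i-1)s_i}, p^{s_i})$ LP-packing in $G_i$ relative to~$U_i$. Since $m \le s_i$, Lemma~\ref{lem-LPpackingunion} with divisor $p^{s_i - m}$ of $p^{s_i}$ converts this into a $(p^{is_i - m}, p^m)$ LP-packing in $G_i$ relative to~$U_i$. All of these packings share the common value $t = p^m$, which is what makes them compatible for the product construction.

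Iterating Theorem~\ref{thm-LPpackingproduct} over $I$ then yields a single LP-packing in $\prod_{i \in I} G_i = G$ (the factors with $s_i = 0$ being trivial) relative to $\prod_{i \in I} U_i = U$. Each application of Theorem~\ref{thm-LPpackingproduct} introduces one extra factor of $t = p^m$ into the $c$-parameter, so after $|I|-1$ applications this parameter equals $p^{m(|I|-1)} \prod_{i \in I} p^{is_i - m} = p^{\sum_i is_i - m}$. Since $|G| = p^{2\sum_i is_i}$, this is precisely $n/p^m$, giving the case $j = 0$. For $1 \le j \le m-1$, a final application of Lemma~\ref{lem-LPpackingunion} with divisor $p^j$ of $p^m$ converts the $(n/p^m, p^m)$ LP-packing into the desired $(n/p^{m-j}, p^{m-j})$ LP-packing relative to~$U$.

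The main point to check is purely a matter of bookkeeping: that the exponent arithmetic in the iterated product step really telescopes to $n/p^m$, and that each application of Theorem~\ref{thm-LPpackingproduct} correctly carries $U_1 \times U_2$ through as the new associated subgroup so that after iteration the relative subgroup is exactly $U = \prod U_i$. Both are immediate from the stated parameters, so no substantively new technique beyond the three cited results is required.
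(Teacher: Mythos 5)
Your proposal is correct and follows essentially the same route as the paper: the paper also starts from the $(p^{(i-1)s_i},p^{s_i})$ LP-packings of Theorem~\ref{thm-LPpackinguniform}, combines them by induction, and finishes with Lemma~\ref{lem-LPpackingunion}; the only cosmetic difference is that you apply Lemma~\ref{lem-LPpackingunion} to all factors up front to equalize $t=p^m$ and then iterate Theorem~\ref{thm-LPpackingproduct}, whereas the paper packages that reduction-plus-product step as Corollary~\ref{coro-LPpackingproduct} and iterates it directly. Your exponent bookkeeping and the tracking of the subgroup $U=\prod_i U_i$ are both accurate.
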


\begin{proof}
For each $i=1,\dots,v$ for which $s_i > 0$, by Theorem~\ref{thm-LPpackinguniform} there exists a $(p^{(i-1)s_i},p^{s_i})$ LP-packing in $G_i$ relative to~$U_i$. By a straightforward induction, Corollary~\ref{coro-LPpackingproduct} then gives an
$\big(\frac{n}{p^m}, p^m\big)$ LP-packing in $G$ relative to~$U$.
For $j \in \{0,\dots,m-1\}$, apply Lemma~\ref{lem-LPpackingunion} with $s=p^j$ to give an $\big(\frac{n}{p^{m-j}}, p^{m-j}\big)$ LP-packing in $G$ relative to~$U$.
\end{proof}

\begin{remark}\label{rem-PDSeven}
By Definition~\ref{def-LPpacking}, the constructed LP-packing in Corollary~\ref{coro-LPpackingeven} is a collection of $p^{m-j}$ disjoint regular $\big(n,\frac{n}{p^{m-j}}\big)$ Latin square type PDSs in $G$ avoiding~$U$.
By Lemma~\ref{lem-LPpackingtoPDS}, we also obtain a regular Latin square type PDS in $G$ with parameters $(n,r)$ and $(n,r+1)$, for each positive integer multiple $r$ of $\frac{n}{p^{m-j}}$ satisfying $r \le n$.

This improves on numerous previous constructions of PDSs in three respects: by constructing a collection of disjoint PDSs in $G$ where previously only a single PDS was constructed; by identifying the order~$n$ subgroup~$U$ and showing it can be chosen arbitrarily; and by producing Latin square type PDSs with new parameters~$(n,r)$.
We give an illustrative selection of examples.
\begin{enumerate}[$(i)$]
\item

Result~\ref{res-PCPPDS} constructs a single regular $(n,r)$ Latin square type PDS in $G$ for each positive integer $r \le p^m+1$, whereas the case $j=0$ of Corollary~\ref{coro-LPpackingeven} gives a disjoint collection of $p^m$ such PDSs with $r = \frac{n}{p^m}$.

\item
Result~\ref{res-ringPDS}~$(iii)$ constructs a regular $(n,r)$ Latin square type PDS in $G$ for each positive integer multiple~$r$ of $\frac{n}{p^{\,\gcd(s_1,\dots,s_v)}}$ satisfying $r \le n$, whereas the case $j=0$ of Corollary~\ref{coro-LPpackingeven} gives such a PDS for each positive integer multiple~$r$ of $\frac{n}{p^m}$ satisfying $r \le n$ and whose parameters therefore differ when $m > \gcd(s_1,\dots,s_v)$.

\item
The LP-packing of {\rm \cite[Cor.~6.2]{P08DCC}} described in Table~\ref{tab-LPpacking} is a disjoint collection of $p$ regular $(n,\frac{n}{p})$ PDSs in certain groups $G$ constrained to have at most one direct factor with an exponent that is an odd power of~$p$, whereas the case $j=m-1$ of Corollary~\ref{coro-LPpackingeven} gives such disjoint collections of PDSs without this constraint.

\item
In each group for which the lifting approach in {\rm \cite{H00}, \cite[Sects.~4,~5]{H03}, \cite{HN}} constructs a single PDS, Corollary~\ref{coro-LPpackingeven} provides a collection of disjoint PDSs having the same parameters and avoiding an arbitrary order~$n$ subgroup $U$ of~$G$.

\item
Result~\ref{res-PDSproductLP} constructs a partition of $G-1_G$ into $p$ regular PDSs in $G$, of which $p-1$ are of $(n,\frac{n}{p})$ Latin square type and one is of $\big(n,\frac{n}{p}+1\big)$ Latin square type, where $G$ is constrained to have $s_{2i+1} = 0$ for each~$i > 0$.
In contrast, by applying Remark~\ref{rem-LPpacking}~$(i)$ to the case $j=0$ of Corollary~\ref{coro-LPpackingeven} we obtain a partition of $G-1_G$ into $p^m$ regular PDSs in~$G$, of which $p^m-1$ are of $\big(n,\frac{n}{p^m}\big)$ Latin square type and one is of $\big(n,\frac{n}{p^m}+1\big)$ Latin square type, without constraining~$G$.

\end{enumerate}
\end{remark}

The LP-packings in Corollary~\ref{coro-LPpackingeven} are all contained in $p$-groups of the form $H \times H$, having even rank. We now extend this result to $p$-groups of odd rank, starting from the following infinite family of odd rank examples that was obtained from an elegant construction using Galois rings. (We could likewise make use of the further odd rank example of Example~\ref{ex-LPpacking444}~$(ii)$, relative to a nonelementary abelian group.)

\begin{result}[{\cite[Lem.~3.1]{HLX}}]\label{res-HLX}
Let $p$ be prime and let $w>1$ be an odd integer.
Then there exists a $(p^{w-\s(p,w)},p^{\s(p,w)})$ LP-packing in $\Z_{p^2}^w$ relative to~$(p\Z_{p^2})^w$, where the function $\sigma(p,w)$ is defined in~\eqref{eqn-s}.
\end{result}

\begin{corollary}\label{coro-LPpackingodd}
Let $p$ be prime, let $w>1$ be an odd integer, and let $s_1,\dots, s_v$ be nonnegative integers (possibly all zero).
Let $m = \min\big(\{\s(p,w)\} \cup \{s_i \mid s_i>0\}\big)$.
For each $i=1,\dots,v$, let $G_i=\Z_{p^i}^{2s_i}$ and let $U_i$ be a subgroup of $G_i$ of order~$p^{is_i}$.
Let $G = \Z_{p^2}^{w} \times \prod_{i=1}^v G_i$, let $U = (p\Z_{p^2})^{w} \times \prod_{i=1}^v U_i$, and let $n = \sqrt{|G|}$.
Then for each $j = 0,\dots,m-1$, there exists an $\big(\frac{n}{p^{m-j}}, p^{m-j}\big)$ LP-packing in $G$ relative to~$U$.
\end{corollary}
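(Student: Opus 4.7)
The plan is to adapt the proof of Corollary~\ref{coro-LPpackingeven} by substituting Result~\ref{res-HLX} for the use of Theorem~\ref{thm-LPpackinguniform} on the odd-rank factor $\Z_{p^2}^w$, and then to combine all ingredients into a product.

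First I would collect the component LP-packings. Result~\ref{res-HLX} gives a $(p^{w-\s(p,w)}, p^{\s(p,w)})$ LP-packing in $\Z_{p^2}^w$ relative to $(p\Z_{p^2})^w$, and for each index~$i$ with $s_i>0$ Theorem~\ref{thm-LPpackinguniform} gives a $(p^{(i-1)s_i}, p^{s_i})$ LP-packing in $G_i$ relative to~$U_i$. Since $m \le \s(p,w)$ and $m \le s_i$ whenever $s_i>0$, Lemma~\ref{lem-LPpackingunion} (applied with the appropriate power of~$p$ as~$s$) normalizes each of these packings so that its outer parameter is $p^m$, producing a $(p^{w-m}, p^m)$ LP-packing in $\Z_{p^2}^w$ relative to $(p\Z_{p^2})^w$ and, for each $i$ with $s_i>0$, a $(p^{is_i-m}, p^m)$ LP-packing in $G_i$ relative to~$U_i$.

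Second, I would iteratively apply Theorem~\ref{thm-LPpackingproduct} (or Corollary~\ref{coro-LPpackingproduct} with all $t$-values equal to~$p^m$) to combine these packings into one large packing in $G = \Z_{p^2}^w \times \prod_i G_i$ relative to $U = (p\Z_{p^2})^w \times \prod_i U_i$. Each application of Theorem~\ref{thm-LPpackingproduct} multiplies the $c$-values and introduces an extra factor of~$p^m$, so after combining all $k$ components the resulting $c$-value is $p^{(k-1)m} \cdot p^{w-m} \prod_{s_i>0} p^{is_i-m} = p^{w + \sum_i is_i - m} = n/p^m$. Thus I obtain an $(n/p^m, p^m)$ LP-packing in $G$ relative to~$U$. (If all $s_i = 0$, the combining step is vacuous and the normalized packing from Result~\ref{res-HLX} already has the correct form.)

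Finally, for each $j \in \{0,\dots,m-1\}$, I would apply Lemma~\ref{lem-LPpackingunion} with $s = p^j$ to this $(n/p^m, p^m)$ LP-packing; this yields an $(n/p^{m-j}, p^{m-j})$ LP-packing in $G$ relative to~$U$, as required. The only non-routine ingredient is Result~\ref{res-HLX} itself, which is cited; the rest of the argument is a bookkeeping exercise verifying the exponents of~$p$ at each stage, and the main point to check carefully is that the ``$t_1 \mid t_2$'' hypothesis needed to combine packings via Corollary~\ref{coro-LPpackingproduct} is built in by the choice of~$m$ and secured by the initial normalization via Lemma~\ref{lem-LPpackingunion}.
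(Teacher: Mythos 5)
Your proposal is correct and follows essentially the same route as the paper: combine the LP-packing of Result~\ref{res-HLX} on the odd-rank factor with the even-rank machinery via the product construction, then merge subsets with Lemma~\ref{lem-LPpackingunion}. The only (cosmetic) difference is that you normalize every factor to $t=p^m$ before taking products, whereas the paper invokes the case $j=0$ of Corollary~\ref{coro-LPpackingeven} for $\prod_{i=1}^v G_i$ and lets the divisibility hypothesis of Corollary~\ref{coro-LPpackingproduct} absorb the mismatch between $p^{\s(p,w)}$ and $p^{\min\{s_i \mid s_i>0\}}$; your exponent bookkeeping checks out.
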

\begin{proof}
In the case that the $s_i$ are all zero, we have $m = \s(p,w)$ and the result follows by applying Lemma~\ref{lem-LPpackingunion} with $s=p^j$ to Result~\ref{res-HLX}.
Otherwise, at least one of the $s_i$ is positive and we may define \mbox{$m' = \min\{s_i \mid s_i > 0\}$}.
The case $j=0$ of Corollary~\ref{coro-LPpackingeven} then gives an $\big(\frac{n/p^w}{p^{m'}}, p^{m'}\big)$ LP-packing in $\prod_{i=1}^v G_i$ relative to~$\prod_{i=1}^v U_i$.
Combine with the LP-packing of Result~\ref{res-HLX} using Corollary~\ref{coro-LPpackingproduct} to give an $\big(\frac{n}{p^{m}}, p^m\big)$ LP-packing in $G$ relative to~$U$.
Then for $j \in \{0,\dots,m-1\}$, apply Lemma~\ref{lem-LPpackingunion} with $s=p^j$ to give an $\big(\frac{n}{p^{m-j}}, p^{m-j}\big)$ LP-packing in $G$ relative to~$U$.
\end{proof}

\begin{remark}\label{rem-PDSodd}
By Definition~\ref{def-LPpacking}, the constructed LP-packing in Corollary~\ref{coro-LPpackingodd} is a collection of $p^{m-j}$ disjoint regular $\big(n,\frac{n}{p^{m-j}}\big)$ Latin square type PDSs in $G$ avoiding~$U$.
By Lemma~\ref{lem-LPpackingtoPDS}, we also obtain a regular Latin square type PDS in $G$ with parameters $\big(n,r)$ and with parameters $\big(n,r+1)$, for each positive integer multiple $r$ of $\frac{n}{p^{m-j}}$ satisfying $r \le n$.

We can show that this improves on previous constructions along similar lines to Remark~\ref{rem-PDSeven}. For example,
Result~\ref{res-ringPDS}~$(ii)$ constructs a single regular $(n,r)$ Latin square type PDS in $G$ for $v=1$ and $i =1$ and $s_1$ a positive integer multiple of $w$, for each positive integer multiple $r$ of $\frac{n}{p^{\s(p,w)}}$ satisfying $r \le n$.
In contrast, the case $j=0$ of Corollary~\ref{coro-LPpackingodd} gives a disjoint collection of $p^m$ such PDSs in much more general groups $G$ (having arbitrary $v$ and~$s_i$).
\end{remark}

\section{NLP-packings}\label{sec6}

In this section, we introduce a $(c,t-1)$ NLP-packing as a collection of $t-1$ disjoint regular $(tc,c)$ negative Latin square type PDSs, in an abelian group of order $t^2c^2$, whose union satisfies an additional property.
This structure forms a counterpart to a $(c,t)$ LP-packing.
We provide examples of NLP-packings, and show how to combine them with LP-packings via a product construction to form NLP-packings in the direct product of the starting groups.
In conjunction with the LP-packings of Corollaries~\ref{coro-LPpackingeven} and~\ref{coro-LPpackingodd}, this produces infinite families of NLP-packings.
However, we do not have a lifting result for NLP-packings similar to Theorem~\ref{thm-composition}.

\begin{definition}[NLP-packing]\label{def-NLPpacking}
Let $t > 1$ and $c>0$ be integers, and let $G$ be an abelian group of order $t^2c^2$.
A \emph{$(c,t-1)$ NLP-packing in $G$} is a collection $\{P_1,\dots,P_{t-1}\}$ of $t-1$ regular $(tc,c)$ negative Latin square type PDSs in~$G$ for which $G-1_G-\sum_{i=1}^{t-1}P_i$ is a regular $(tc,c-1)$ negative Latin square type PDS in~$G$.
\end{definition}

\begin{remark}\label{rem-NLPpacking}
\quad
\begin{enumerate}[$(i)$]
\item
We can rephrase Definition~\ref{def-NLPpacking} to say that a $(c,t-1)$ NLP-packing in $G$ is a partition of $G-1_G$ into $t$ regular partial difference sets, of which $t-1$ are of $(tc,c)$ negative Latin square type and one is of $(tc,c-1)$ negative Latin square type. Previous results on NLP-packings use this phrasing.

\item
In Definition~\ref{def-NLPpacking}, each $P_i$ is a $c(tc+1)$-subset of $G$, and $G-1_G-\sum_{i=1}^{t-1}P_i$ is a $(c-1)(tc+1)$-subset of $G$, so the subsets $P_i$ of a $(c,t-1)$ NLP-packing are necessarily disjoint.
For all $c \ge 1$, if a $(c,t-1)$ NLP-packing exists in $G$, then it contains the maximum possible number of disjoint regular $(tc,c)$ negative Latin square type PDSs.

\item
The case $r=0$ was allowed in Definition~\ref{def-LatinPDS} so that the definition of a $(c,t-1)$ NLP-packing applies when $c=1$.
\end{enumerate}
\end{remark}

NLP-packings have been studied in a series of previous papers, as summarized in Table~\ref{tab-NLPpacking}. We now characterize a $(c,t-1)$ NLP-packing using character sums.

\begin{lemma}\label{lem-NLPpacking}
Let $P_1,\dots,P_{t-1}$ be disjoint $c(tc+1)$-subsets of an abelian group $G$ of order~$t^2c^2$.
Then $\{P_1, \ldots, P_{t-1}\}$ is a $(c,t-1)$ NLP-packing in $G$ if and only if the multiset equality
\[
\{\chi(P_1),\dots,\chi(P_{t-1})\} = \{c,\dots,c\} \mbox{ or } \{-(t-1)c,c,\dots,c\}
\]
holds for all nonprincipal characters $\chi$ of~$G$.
\end{lemma}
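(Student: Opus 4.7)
The plan is to follow the template of Lemma~\ref{lem-LPpacking} very closely, but with the ``complementary'' subset $Q := G - 1_G - \sum_{i=1}^{t-1} P_i$ playing the role of the avoided subgroup $U$. Throughout the argument I would use Lemma~\ref{lem-LatinPDS} with $\ep=-1$, $n=tc$, $r=c$ to switch between a regular $(tc,c)$ negative Latin square type PDS and the character value set $\{c,-(t-1)c\}$, and with $r=c-1$ to switch between a regular $(tc,c-1)$ negative Latin square type PDS and the character value set $\{c-1,-((t-1)c+1)\}$.

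For the forward direction, assume $\{P_1,\dots,P_{t-1}\}$ is a $(c,t-1)$ NLP-packing. By Definition~\ref{def-NLPpacking} each $P_i$ and the subset $Q$ are regular negative Latin square type PDSs of the respective parameters, so Lemma~\ref{lem-LatinPDS} gives the corresponding character value constraints. Applying a nonprincipal character $\chi$ to the identity $Q + \sum_{i=1}^{t-1} P_i = G - 1_G$ yields
\[
\sum_{i=1}^{t-1} \chi(P_i) \;=\; -1-\chi(Q) \;\in\; \{-c,\,(t-1)c\}.
\]
A short counting step, parameterised by the number $k$ of indices with $\chi(P_i) = -(t-1)c$, shows that the sum equals $-c$ only when $k=1$ and equals $(t-1)c$ only when $k=0$. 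These two cases correspond exactly to the two multisets in the claim.

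For the reverse direction, assume the multiset condition. Each $P_i$ is a $c(tc+1)$-subset with $\chi(P_i)\in\{c,-(t-1)c\}$ for every nonprincipal $\chi$, so Lemma~\ref{lem-LatinPDS} produces that each $P_i$ is a regular $(tc,c)$ negative Latin square type PDS; in particular $1_G\notin P_i$ for every~$i$. Combined with the disjointness hypothesis this ensures $Q$ is a genuine subset of $G$, and a direct size count gives $|Q|=(c-1)(tc+1)$. Using $\chi(Q) = -1-\sum_i \chi(P_i)$ and inspecting each of the two allowed multisets produces $\chi(Q)\in\{c-1,-((t-1)c+1)\}$ for all nonprincipal $\chi$. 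Lemma~\ref{lem-LatinPDS} then certifies that $Q$ is a regular $(tc,c-1)$ negative Latin square type PDS, so Definition~\ref{def-NLPpacking} is satisfied.

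I do not anticipate any substantive obstacle. The only step that requires any thought is the small combinatorial computation with the integer $k$, which solves a single linear equation determined by how the values $c$ and $-(t-1)c$ can combine to a prescribed sum of $-c$ or $(t-1)c$. The remaining work is arithmetic bookkeeping and two direct appeals to Lemma~\ref{lem-LatinPDS}.
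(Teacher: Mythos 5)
Your proposal is correct and follows essentially the same route as the paper's proof: both reduce the NLP-packing condition via Lemma~\ref{lem-LatinPDS} to the character constraints $\chi(P_i)\in\{c,-(t-1)c\}$ and $\chi\big(G-1_G-\sum_i P_i\big)\in\{c-1,-(tc-c+1)\}$, and then observe that these are equivalent to the stated multiset condition by the same short counting argument. The paper phrases this as a single chain of equivalences rather than two separate directions, but the content is identical (and your explicit remark that the character condition forces $1_G\notin P_i$, so that $G-1_G-\sum_i P_i$ is a genuine subset, is a small point the paper leaves implicit).
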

\begin{proof}
Each $P_i$ is a $c(tc+1)$-subset of $G$, and the $P_i$ are disjoint, so $G-1_G-\sum_{i=1}^{t-1}P_i$ is a $(c-1)(tc+1)$-subset of~$G$. Lemma~\ref{lem-LatinPDS} then shows that $\{P_1,\dots,P_{t-1}\}$ is a $(c,t-1)$ NLP-packing in $G$ if and only if
\begin{equation}
\label{eqn-chiPi}
\chi(P_i) \in \{c, -(t-1)c\} \quad \! \mbox{and} \! \quad
\chi\Big(G-1_G-\sum_{i=1}^{t-1}P_i\Big) \in \{c-1, -(tc-c+1)\}
\end{equation}
for all nonprincipal characters $\chi$ of $G$.
For each nonprincipal character $\chi$ of $G$, condition \eqref{eqn-chiPi}
is equivalent to
\[
\chi(P_i) \in \{c, -(t-1)c\} \quad \mbox{ and } \quad
\sum_{i=1}^{t-1}\chi(P_i) \in \{-c, (t-1)c)\},
\]
which in turn is equivalent to
\[
\{\chi(P_1),\dots,\chi(P_{t-1})\} = \{c,\dots,c\} \mbox{ or } \{-(t-1)c,c,\dots,c\},
\]
as required.
\end{proof}

Lemmas~\ref{lem-LatinPDS} and~\ref{lem-NLPpacking} allow us to combine subsets from an NLP-packing to obtain negative Latin square type PDSs with various parameters.

\begin{lemma}\label{lem-NLPpackingtoPDS}
Suppose that $\{P_1, \ldots, P_{t-1}\}$ is a $(c,t-1)$ NLP-packing in an abelian group $G$ of order~$t^2c^2$, and let $I$ be a $b$-subset of $\{1,\dots,t-1\}$. Then $\sum_{i \in I} P_i$ is a regular $(tc,bc)$ negative Latin square type PDS in~$G$.
\end{lemma}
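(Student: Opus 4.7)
The plan is to mimic the strategy used in Lemma~\ref{lem-LPpackingtoPDS}: translate the NLP-packing hypothesis into character-sum information via Lemma~\ref{lem-NLPpacking}, read off the character sums of $S := \sum_{i \in I} P_i$, and then apply the character-theoretic criterion of Lemma~\ref{lem-LatinPDS} with $\epsilon = -1$.

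First I would record the basic data. Since the $P_i$ in an NLP-packing are pairwise disjoint (see Remark~\ref{rem-NLPpacking}~$(ii)$), $S$ is a genuine subset of $G$ of size $|S| = b \cdot c(tc+1) = bc(tc+1)$, which matches the cardinality $r(n-\epsilon)$ of a regular $(n,r)$ negative Latin square type PDS with $n = tc$, $r = bc$, $\epsilon = -1$. Moreover $1_G \notin P_i$ for each $i$ (each $P_i$ is regular), so $1_G \notin S$.

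Next I would compute $\chi(S)$ for an arbitrary nonprincipal character $\chi$ of $G$. By Lemma~\ref{lem-NLPpacking}, the multiset $\{\chi(P_1),\dots,\chi(P_{t-1})\}$ either equals $\{c,\dots,c\}$ or contains exactly one occurrence of $-(t-1)c$ together with $t-2$ copies of $c$. In the first case, $\chi(S) = bc$. In the second case, let $i_0$ be the unique index with $\chi(P_{i_0}) = -(t-1)c$; if $i_0 \in I$ then
\[
\chi(S) = -(t-1)c + (b-1)c = -(t-b)c,
\]
while if $i_0 \notin I$ then $\chi(S) = bc$. In every case $\chi(S) \in \{bc,\ -(t-b)c\} = \{-\epsilon r,\ \epsilon(n-r)\}$ for $(n,r,\epsilon) = (tc,bc,-1)$.

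Finally, invoking Lemma~\ref{lem-LatinPDS} with $\epsilon = -1$ concludes that $S$ is a regular $(tc,bc)$ negative Latin square type PDS in $G$. There is no real obstacle here: the argument is a direct parallel of Lemma~\ref{lem-LPpackingtoPDS}~$(i)$, and the only point worth double-checking is the arithmetic $-(t-1)c + (b-1)c = -(t-b)c$ in the case analysis above, which produces precisely the second admissible character value predicted by Lemma~\ref{lem-LatinPDS}.
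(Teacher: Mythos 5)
Your proposal is correct and follows exactly the route the paper intends: the paper gives no separate proof of Lemma~\ref{lem-NLPpackingtoPDS}, stating only that it follows from Lemmas~\ref{lem-LatinPDS} and~\ref{lem-NLPpacking}, and your case analysis of the multiset $\{\chi(P_1),\dots,\chi(P_{t-1})\}$ together with the size and regularity checks supplies precisely the omitted details. The arithmetic $-(t-1)c+(b-1)c=-(t-b)c$ and the target value set $\{bc,\,-(t-b)c\}=\{-\ep r,\ep(n-r)\}$ for $(n,r,\ep)=(tc,bc,-1)$ are both right.
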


\begin{remark}
Suppose that $\{P_1, \ldots, P_{t-1}\}$ is a $(c,t-1)$ NLP-packing in an abelian group $G$ of order~$t^2c^2$.

\begin{enumerate}[$(i)$]

\item
Let $I$ be a $b$-subset of $\{1,\dots,t-1\}$.
By Lemma~\ref{lem-LatinPDS}, an $r(n+1)$-subset $D$ of an abelian group $G$ of order $n^2$ is a regular $(n,r)$ negative Latin square type PDS in $G$ if and only if 
$G-1_G-D$ is a regular $(n,n-r-1)$ negative Latin square type PDS in~$G$.
Lemma~\ref{lem-NLPpackingtoPDS} then shows that
$G-1_G-\sum_{i \in I} P_i$ is a regular $(tc,(t-b)c-1)$ negative Latin square type PDS in~$G$.

\item
For each $i$, let $\wti{{P\mkern 0mu}_i} = \{\chi \in \wh{G} \mid \chi(P_i) = -(t-1)c\}$.
Let $P = G-1_G-\sum_{i=1}^{t-1}P_i$ and let $\wti{P} = \{\chi \in \wh{G} \mid \chi(P) = -(tc-c+1)\}$.
By Lemma~\ref{lem-LatinPDS} and Definition~\ref{def-NLPpacking}, each $\wti{{P\mkern 0mu}_i}$ is a regular $(tc,c)$ negative Latin square type PDS in~$\wh{G}$, and $\wti{P}$ is a
regular $(tc,c-1)$ negative Latin square type PDS in~$\wh{G}$.
By Lemma~\ref{lem-NLPpacking}, each nonprincipal character $\chi$ of $G$ belongs to exactly one of $\wti{P_1},\dots,\wti{P_{t-1}},\wti{P}$.
By Definition~\ref{def-NLPpacking}, the collection $\{\wti{P_1}, \dots, \wti{P_{t-1}}\}$ is therefore a $(c,t-1)$ NLP-packing in~$\wh{G}$.
\end{enumerate}

\end{remark}

We give some small examples of NLP-packings.

\begin{example}\label{ex-NLPpackingsmall}
\quad
\begin{enumerate}[$(i)$]
\item
{\rm\cite[p.~373]{P08DCC}}.
A $(1,3)$ NLP-packing $\{P_1,P_2,P_3\}$ in $\Z_2^4=\lan x,y,z,w \ran$ is given by
\begin{align*}
P_1 &= x+z+yw+xw+yz, \\
P_2 &= y+w+xyzw+yzw+xyw, \\
P_3 &= xy+zw+xz+xyz+xzw.
\end{align*}

\item
{\rm\cite[p.~373]{P08DCC}}.
A $(1,3)$ NLP-packing $\{P_1,P_2,P_3\}$ in $\Z_4^2=\lan x,y \ran$ is given by
\begin{align*}
P_1 &= x^2+y+y^3+xy+x^3y^3, \\
P_2 &= x+x^3+y^2+xy^3+x^3y, \\
P_3 &= xy^2+x^3y^2+x^2y+x^2y^3+x^2y^2.
\end{align*}

\item
{\rm\cite[Ex.~1]{P08DCC}}.
A $(1,2)$ NLP-packing $\{P_1,P_2\}$ in $\Z_3^2=\lan x,y \ran$ is given by
\begin{align*}
P_1 &= x+x^2+y+y^2, \\
P_2 &= xy+x^2y^2+xy^2+x^2y.
\end{align*}

\end{enumerate}

\end{example}

We next show that combining the subsets of an NLP-packing into equally-sized collections gives an NLP-packing with fewer subsets.

\begin{lemma}\label{lem-NLPpackingunion}
Suppose there exists a $(c,t-1)$ NLP-packing in an abelian group $G$ of order $t^2c^2$, and suppose $s$ divides~$t$. Then there exists an $(sc,\frac{t}{s}-1)$ NLP-packing in~$G$.
\end{lemma}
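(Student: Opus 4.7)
My plan is to mimic the proof of the analogous statement for LP-packings (Lemma~\ref{lem-LPpackingunion}), combining consecutive blocks of the NLP-packing into sums and verifying the character-sum condition of Lemma~\ref{lem-NLPpacking} at the new parameters.

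Let $\{P_1,\dots,P_{t-1}\}$ be a $(c,t-1)$ NLP-packing in $G$. First, I would define
\[
P'_i \;=\; P_{(i-1)s+1}+P_{(i-1)s+2}+\cdots+P_{is} \quad \text{for $i=1,\dots,\tfrac{t}{s}-1$},
\]
so exactly $t-s$ of the original PDSs are consumed and the $s-1$ remaining ones (together with the leftover $(tc,c-1)$ negative Latin square type PDS) will form the ``leftover'' set of the new NLP-packing. Since the $P_j$ are disjoint $c(tc+1)$-subsets of $G$, each $P'_i$ is a disjoint union of size $sc(tc+1)$, so the data matches the hypothesis of Lemma~\ref{lem-NLPpacking} with parameters $(sc,\tfrac{t}{s}-1)$ (noting $|G|=t^2c^2=(t/s)^2(sc)^2$).

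Next I would verify the character-sum condition. Fix a nonprincipal character $\chi$ of~$G$. By Lemma~\ref{lem-NLPpacking} applied to the original NLP-packing, either all $\chi(P_j)$ equal $c$, or exactly one of them equals $-(t-1)c$ and the rest equal~$c$. In the first case, every $\chi(P'_i)$ equals $sc$. In the second case, the exceptional index $j$ lies either in one of the blocks $\{(i-1)s+1,\dots,is\}$ for some $i\in\{1,\dots,\tfrac{t}{s}-1\}$, yielding
\[
\chi(P'_i) \;=\; (s-1)c+\bigl(-(t-1)c\bigr) \;=\; -(t-s)c \;=\; -\bigl(\tfrac{t}{s}-1\bigr)sc,
\]
with every other $\chi(P'_{i'})$ equal to $sc$; or else the exceptional index lies among the unused $s-1$ indices, in which case every $\chi(P'_i)$ again equals $sc$. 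Thus for every nonprincipal $\chi$,
\[
\{\chi(P'_1),\dots,\chi(P'_{t/s-1})\} \;=\; \{sc,\dots,sc\} \ \text{or}\ \{-\bigl(\tfrac{t}{s}-1\bigr)sc,\,sc,\dots,sc\},
\]
which is precisely the condition of Lemma~\ref{lem-NLPpacking} at parameters $(sc,\tfrac{t}{s}-1)$.

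There is no real obstacle here; the argument is essentially a bookkeeping exercise that parallels Lemma~\ref{lem-LPpackingunion}, with the only mildly subtle point being the two sub-cases of where the ``exceptional'' index falls (inside an aggregated block, or in the unused tail). Once the character condition is verified, Lemma~\ref{lem-NLPpacking} immediately yields that $\{P'_1,\dots,P'_{t/s-1}\}$ is an $(sc,\tfrac{t}{s}-1)$ NLP-packing in~$G$, completing the proof.
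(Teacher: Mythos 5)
Your proposal is correct and takes essentially the same approach as the paper: the paper's proof defines exactly the same block sums $P'_i$ (consuming $t-s$ of the original PDSs and leaving $s-1$ of them, plus the original leftover set, as the new leftover) and then simply cites Lemma~\ref{lem-NLPpacking}, leaving implicit the character-sum bookkeeping that you carry out explicitly.
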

\begin{proof}
Let $\{P_1, \dots, P_{t-1}\}$ be a $(c,t-1)$ NLP-packing in $G$, and let
\[
P'_i = P_{is+1} + P_{is+2} + \dots + P_{is+s} \quad \mbox{for $i = 0, 1,\dots, \frac{t}{s}-2$}.
\]
Then Lemma~\ref{lem-NLPpacking} shows that $\{P'_0,\dots,P'_{\frac{t}{s}-2}\}$ is an $(sc,\frac{t}{s}-1)$ NLP-packing in~$G$.
\end{proof}

The following product construction combines an NLP-packing with an LP-packing to form an NLP-packing in a larger group.

\begin{theorem}[{\cite[Thm.~2.1]{P08DCC}}]\label{thm-NLPLPpackingproduct}
Suppose there exists a
$(c_1,t-1)$ NLP-packing in an abelian group $G_1$ of order $t^2c_1^2$, and a
$(c_2,t)$ LP-packing in an abelian group $G_2$ of order $t^2c_2^2$ relative to a subgroup $U_2$ of order~$tc_2$.
Then there exists a $(tc_1c_2,t-1)$ NLP-packing in $G_1 \times G_2$.
\end{theorem}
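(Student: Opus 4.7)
The plan is to adapt the product construction of Theorem~\ref{thm-LPpackingproduct} to this mixed setting, using a normalization that absorbs the asymmetry between the NLP-packing (which has a residual $(tc_1, c_1-1)$ piece) and the LP-packing (whose $t$ pieces all have $(tc_2, c_2)$ parameters). Let $\{Q_1,\dots,Q_{t-1}\}$ be the given $(c_1,t-1)$ NLP-packing in $G_1$, and set $Q_0 := G_1 - 1_{G_1} - \sum_{i=1}^{t-1} Q_i$, which by Definition~\ref{def-NLPpacking} is a regular $(tc_1, c_1-1)$ negative Latin square type PDS in~$G_1$. Define $\wti Q_0 := Q_0 + 1_{G_1}$ and $\wti Q_i := Q_i$ for $i=1,\dots,t-1$; then $\sum_{i=0}^{t-1} \wti Q_i = G_1$, and a short computation using Lemmas~\ref{lem-LatinPDS} and~\ref{lem-NLPpacking} shows that for every nonprincipal character $\chi_1$ of~$G_1$, exactly one of the values $\chi_1(\wti Q_0),\dots,\chi_1(\wti Q_{t-1})$ equals $-(t-1)c_1$ and the other $t-1$ equal $c_1$. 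Absorbing the identity element in this way is the key step: it yields a collection of $t$ subsets of $G_1$ whose character profile is uniform enough to import the LP-product argument with minimal change.

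Label the LP-packing in $G_2$ as $\{P_0, P_1, \dots, P_{t-1}\}$, with indices taken modulo~$t$. For $\ell = 1,\dots,t-1$ I will define
\[
K_\ell := \wti Q_\ell U_2 + \sum_{i=0}^{t-1} \wti Q_i P_{i+\ell},
\]
interpreted in $\Z[G_1 \times G_2]$. Each $K_\ell$ is the disjoint union of $t+1$ rectangles of the form $\wti Q_a \times U_2$ or $\wti Q_a \times P_b$, corresponding to the label set $A_\ell := \{(\ell, U_2)\} \cup \{(i, P_{i+\ell}) : 0 \le i \le t-1\}$. Since $G_2 = U_2 \sqcup P_0 \sqcup \cdots \sqcup P_{t-1}$ by Definition~\ref{def-LPpacking} and $G_1 = \wti Q_0 \sqcup \cdots \sqcup \wti Q_{t-1}$ by construction, the rectangles $\wti Q_a \times X_b$ partition $G_1 \times G_2$; a direct index check then gives $A_\ell \cap A_{\ell'} = \es$ for $\ell \ne \ell'$, so the $K_\ell$ are pairwise disjoint, and their common cardinality $tc_1c_2(t^2c_1c_2+1)$ follows by applying the principal character to the defining expression.

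To finish I will invoke Lemma~\ref{lem-NLPpacking}: it remains to verify, for every nonprincipal character $\chi = \chi_1\chi_2$ of $G_1 \times G_2$, that the multiset $\{\chi(K_1),\dots,\chi(K_{t-1})\}$ equals $\{tc_1c_2,\dots,tc_1c_2\}$ or $\{-(t-1)tc_1c_2, tc_1c_2,\dots,tc_1c_2\}$. Since
\[
\chi(K_\ell) = \chi_1(\wti Q_\ell)\chi_2(U_2) + \sum_{i=0}^{t-1} \chi_1(\wti Q_i)\chi_2(P_{i+\ell}),
\]
a case split on whether $\chi_1$ is principal or not, and on whether $\chi_2$ lies in $G_2^\perp$, in $U_2^\perp \sm G_2^\perp$, or outside $U_2^\perp$, reduces every sum using the identities $\sum_i \chi_1(\wti Q_i) = \chi_1(G_1)$ and $\sum_i \chi_2(P_{i+\ell}) = \chi_2(G_2-U_2)$, together with the character profile of the $\wti Q_i$ from paragraph one and of the $P_i$ from Lemma~\ref{lem-LPpacking}.

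The main obstacle will be the bookkeeping in the most intricate case, where $\chi_1$ is nonprincipal and $\chi_2 \notin U_2^\perp$. There are then two distinguished indices: the unique $k \in \{0,\dots,t-1\}$ with $\chi_1(\wti Q_k) = -(t-1)c_1$, and the unique $j \in \{0,\dots,t-1\}$ with $\chi_2(P_j) = (t-1)c_2$. After using $\chi_2(U_2) = 0$ and the identity $\sum_i \chi_1(\wti Q_i) = 0$, the sum collapses to $\chi(K_\ell) = tc_2\chi_1(\wti Q_{j-\ell})$, so the distinguished value $-(t-1)tc_1c_2$ occurs precisely when $\ell \equiv j-k \pmod t$, which has zero or one solutions in $\{1,\dots,t-1\}$ according as $k=j$ or $k\ne j$. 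The remaining cases yield the required multiset by similar but shorter calculations, and Lemma~\ref{lem-NLPpacking} then delivers the desired $(tc_1c_2, t-1)$ NLP-packing in $G_1 \times G_2$.
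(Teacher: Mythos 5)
Your proposal is correct and follows essentially the same route as the paper: you extend the NLP-packing to $t$ pieces by absorbing $1_{G_1}$ into the residual set (the paper's $P_{1,t-1}=G_1-\sum_{i}P_{1,i}$ is exactly your $\wti Q_0$), form the same sets $K_\ell=\wti Q_\ell U_2+\sum_i \wti Q_i P_{i+\ell}$, and verify the character profile of Lemma~\ref{lem-NLPpacking} by the same case analysis, with the key case reducing to $\chi(K_\ell)=tc_2\chi_1(\wti Q_{j-\ell})$ just as in the paper's Case~2. The only differences are cosmetic (indexing, and splitting on whether $\chi_1$ is principal rather than unifying both subcases with the paper's parameter $d$).
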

\begin{proof}
Let $\{ P_{1,0},\dots,P_{1,t-2} \}$ be a $(c_1,t-1)$ NLP-packing in~$G_1$, and let  $\{ P_{2,0},\dots,P_{2,t-1} \}$ be a $(c_2,t)$ LP-packing in~$G_2$ relative to~$U_2$.
Define $P_{1,t-1}=G_1-\sum_{i=0}^{t-2} P_{1,i}$ and
\[
K_{\ell}=P_{1,\ell}U_2+\sum_{i=0}^{t-1} P_{1,i}P_{2,i+\ell} \quad \mbox{for $\ell = 0, \dots, t-2$},
\]
where the subscript $i+\ell$ is reduced modulo~$t$.
We shall use Lemma~\ref{lem-NLPpacking} to show that
$\{ K_0, K_1, \dots,$ $K_{t-2} \}$ is a $(tc_1c_2,t-1)$ NLP-packing in $G_1 \times G_2$.

The $P_{1,i}$ are disjoint $c_1(tc_1+1)$-subsets of $G_1$ for $i=0,\dots,t-2$, and by definition $P_{1,t-1}$ is a $c_1(tc_1-t+1)$-subset of $G_1$ disjoint from the other~$P_{1,i}$. The $P_{2,i}$ are $c_2(tc_2-1)$-subsets of $G_2-U_2$. Therefore the $K_\ell$ are disjoint subsets of $G_1 \times G_2$, each of size
$c_1(tc_1+1)tc_2+c_1(tc_1-t+1)c_2(tc_2-1)+(t-1)c_1(tc_1+1)c_2(tc_2-1)=tc_1c_2(t^2c_1c_2+1)$.
Let $\chi$ be a nonprincipal character of $G_1 \times G_2$, and let $\chi_j=\chi|_{G_j}$ for $j=1,2$. Then
\[
\chi(K_\ell) = \chi_1(P_{1,\ell}) \chi_2(U_2) + \sum_{i=0}^{t-1} \chi_1(P_{1,i})\chi_2(P_{2,i+\ell}) \quad \mbox{for $\ell = 0,\dots,t-2$}.
\]
By Lemma~\ref{lem-NLPpacking}, it remains to prove that
\begin{equation}\label{eqn-NLPprod}
\{\chi(K_0),\dots,\chi(K_{t-2})\} = \{tc_1c_2,\dots,tc_1c_2\} \mbox{ or } \{-t(t-1)c_1c_2,tc_1c_2,\dots,tc_1c_2\}.
\end{equation}

We are given that
\begin{equation}\label{eqn-chi1P10}
\{\chi_1(P_{1,0}),\dots,\chi_1(P_{1,t-2})\} = \begin{cases}
  \{c_1(tc_1+1),\dots,c_1(tc_1+1)\}	& \mbox{if $\chi_1 \in G_1^\perp$,} \\
  \{c_1,\dots,c_1\} \mbox{ or } \{-(t-1)c_1,c_1,\dots,c_1\}		
					& \mbox{if $\chi_1 \not\in G_1^\perp$}
\end{cases}
\end{equation}
by Lemma~\ref{lem-NLPpacking}, and that
\[
\{\chi_2(P_{2,0}),\dots,\chi_2(P_{2,t-1})\} = \begin{cases}
  \{c_2(tc_2-1),\dots, c_2(tc_2-1)\} 	& \mbox{if $\chi_2 \in G_2^\perp$,} \\
  \{-c_2,\dots, -c_2\} 	    		& \mbox{if $\chi_2 \in U_2^\perp \sm G_2^\perp$,} \\
  \{(t-1)c_2, -c_2, \dots, -c_2\} 	& \mbox{if $\chi_2 \notin U_2^\perp$}
\end{cases}
\]
by Lemma~\ref{lem-LPpacking}.
The following two cases together establish~\eqref{eqn-NLPprod}.

\begin{description}
\item[{\bf Case 1:}] $\chi_2$ is principal on $U_2$.\\
Then $\chi_2(P_{2,i})$ is constant over $i$ (regardless of whether $\chi_2$ is principal or nonprincipal on $G_2$), so
$\chi(K_\ell) = \chi_1(P_{1,\ell}) tc_2 + \chi_1 \big(\sum_{i=0}^{t-1}P_{1,i}\big) \chi_2(P_{2,1}) = \chi_1(P_{1,\ell}) tc_2 + \chi_1(G_1) \chi_2(P_{2,1})$.
In the case that $\chi_1$ is principal on $G_1$, and therefore $\chi_2$ is nonprincipal on $G_2$, we obtain
$\chi(K_\ell) = c_1(tc_1+1) tc_2 + t^2c_1^2 (-c_2) = tc_1c_2$ for each~$\ell$.
Otherwise $\chi_1$ is nonprincipal on $G_1$, and then we obtain
$\chi(K_\ell) = \chi_1(P_{1,\ell}) tc_2$ so that
$\{\chi(K_0),\dots,\chi(K_{t-2})\} =
\{tc_1c_2,\dots,tc_1c_2\}$ or $\{-(t-1)tc_1c_2,tc_1c_2,\dots,tc_1c_2\}$.

\item[{\bf Case 2:}] $\chi_2$ is nonprincipal on $U_2$.\\
Then
$\chi(K_\ell) = \sum_{i=0}^{t-1} \chi_1(P_{1,i})\chi_2(P_{2,i+\ell})$.
By definition of $P_{1,t-1}$, we can extend \eqref{eqn-chi1P10} to obtain
\[
\begin{split}
&\{\chi_1(P_{1,0}),\dots,\chi_1(P_{1,t-1})\} \\
=& \begin{cases}
  \{c_1(tc_1-t+1),c_1(tc_1+1),\dots,
  c_1(tc_1+1)\}	
  & \mbox{if $\chi_1 \in G_1^\perp$,} \\
  \{-(t-1)c_1,c_1,\dots,c_1\} & \mbox{if $\chi_1 \not\in G_1^\perp$},
\end{cases}
\end{split}
\]
which can be written as
\[
\{\chi_1(P_{1,0}),\dots,\chi_1(P_{1,t-1})\} = \{d-tc_1,d,\dots,d\} \quad \mbox{for all $\chi_1$}
\]
(where the integer $d$ equals $ c_1(tc_1+1)$ if $\chi_1$ is principal on $G_1^\perp$, and equals $c_1$ otherwise).
Therefore the multiset $\{\chi(K_0),\dots,\chi(K_{t-2})\}$ contains
either one or zero occurrences of the value
$(d-tc_1)(t-1)c_2 + (t-1)d(-c_2) = -t(t-1)c_1c_2$,
and (respectively)
$t-2$ or $t-1$ occurrences of the value $(d-tc_1)(-c_2)+d(t-1)c_2 + (t-2)d(-c_2) = tc_1c_2$.
\end{description}
\end{proof}
\noindent

\begin{remark}
The construction of Theorem~\ref{thm-NLPLPpackingproduct} is a restatement (without restricting $t$ to be a prime power) of the product construction for collections of negative Latin square type PDSs and Latin square type PDSs given in {\rm \cite[Thm.~2.1]{P08DCC}}.
Indeed, the configuration described in {\rm \cite[Lemma 2.1]{P08DCC}} for $e=-1$ can be represented as $\{ P_1, \dots, P_{t-1}, G-1_G-\sum_{i=1}^{t-1} P_i\}$, where $\{P_1,\dots,P_{t-1}\}$ is a $(c,t-1)$ NLP-packing in $G$.
Although our formulation explicitly identifies the subgroup $U_2$ of the starting LP-packing, this does not seem to confer an advantage because the subgroup does not appear in the conclusion of the theorem.
\end{remark}

We now extend the construction of Theorem~\ref{thm-NLPLPpackingproduct} using Lemmas~\ref{lem-LPpackingunion} and~\ref{lem-NLPpackingunion}.

\begin{corollary}\label{coro-NLPLPpackingproduct}
Suppose there exists a $(c_1,t_1-1)$ NLP-packing in an abelian group $G_1$ of order $t_1^2c_1^2$, and there exists a $(c_2,t_2)$ LP-packing in an abelian group $G_2$ of order $t_2^2c_2^2$ relative to a subgroup $U_2$ of order~$t_2c_2$.
\begin{enumerate}[$(i)$]
\item
Suppose that $t_1$ divides~$t_2$. Then there exists a $(t_2c_1c_2,t_1-1)$ NLP-packing in $G_1 \times G_2$.
\item
Suppose that $t_2$ divides~$t_1$. Then there exists a $(t_1c_1c_2,t_2-1)$ NLP-packing in $G_1 \times G_2$.
\end{enumerate}
\end{corollary}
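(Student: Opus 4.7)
The plan is to reduce each case to the situation where the NLP-packing and the LP-packing share a common value of the parameter $t$, and then invoke Theorem~\ref{thm-NLPLPpackingproduct} directly. The two union lemmas (Lemma~\ref{lem-LPpackingunion} for LP-packings and Lemma~\ref{lem-NLPpackingunion} for NLP-packings) allow us to coarsen either packing by combining its subsets in equally-sized groups, which is exactly what is needed to align the two $t$ parameters.

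For part $(i)$, since $t_1$ divides $t_2$, I would set $s = t_2/t_1$ and apply Lemma~\ref{lem-LPpackingunion} to the given $(c_2,t_2)$ LP-packing in $G_2$ relative to $U_2$. This produces a $(sc_2, t_2/s) = \bigl(\tfrac{t_2}{t_1}c_2, t_1\bigr)$ LP-packing in $G_2$ relative to the same subgroup $U_2$ (whose order $t_2c_2 = t_1 \cdot \tfrac{t_2}{t_1}c_2$ is now consistent with the new parameters). Combined with the original $(c_1,t_1-1)$ NLP-packing in $G_1$, Theorem~\ref{thm-NLPLPpackingproduct} (applied with common parameter $t=t_1$) then yields a $\bigl(t_1 \cdot c_1 \cdot \tfrac{t_2}{t_1}c_2, t_1-1\bigr) = (t_2 c_1 c_2, t_1-1)$ NLP-packing in $G_1 \times G_2$, as required.

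For part $(ii)$, since $t_2$ divides $t_1$, I would instead set $s = t_1/t_2$ and apply Lemma~\ref{lem-NLPpackingunion} to the given $(c_1, t_1-1)$ NLP-packing in $G_1$. This produces an $(sc_1, t_1/s - 1) = \bigl(\tfrac{t_1}{t_2} c_1, t_2 - 1\bigr)$ NLP-packing in $G_1$. Combining with the given $(c_2, t_2)$ LP-packing in $G_2$ relative to $U_2$ via Theorem~\ref{thm-NLPLPpackingproduct} (now with common parameter $t = t_2$) gives a $\bigl(t_2 \cdot \tfrac{t_1}{t_2} c_1 \cdot c_2, t_2 - 1\bigr) = (t_1 c_1 c_2, t_2 - 1)$ NLP-packing in $G_1 \times G_2$.

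There is no genuine obstacle here: the result is a bookkeeping corollary whose only content is verifying that the arithmetic of the parameters matches up after applying the appropriate union lemma on each side. The only mild care needed is to check that the hypothesis of Theorem~\ref{thm-NLPLPpackingproduct} is satisfied after the reduction step, namely that the group orders are consistent with the reduced parameters ($|G_1| = t_1^2 c_1^2 = t_2^2 \bigl(\tfrac{t_1}{t_2} c_1\bigr)^2$ in part $(ii)$, and similarly $|G_2| = t_2^2 c_2^2 = t_1^2 \bigl(\tfrac{t_2}{t_1} c_2\bigr)^2$ in part $(i)$), which is immediate.
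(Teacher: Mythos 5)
Your proposal is correct and matches the paper's own proof exactly: part $(i)$ applies Lemma~\ref{lem-LPpackingunion} with $s=t_2/t_1$ to the LP-packing and part $(ii)$ applies Lemma~\ref{lem-NLPpackingunion} with $s=t_1/t_2$ to the NLP-packing, followed in each case by Theorem~\ref{thm-NLPLPpackingproduct}. The parameter bookkeeping you carry out is the same as (indeed slightly more explicit than) what appears in the paper.
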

\begin{proof}
\quad
\begin{enumerate}[$(i)$]
\item
By Lemma~\ref{lem-LPpackingunion} with $s=\frac{t_2}{t_1}$, there exists a $(\frac{t_2}{t_1}c_2,t_1)$ LP-packing in $G_2$ relative to $U_2$. Then by Theorem~\ref{thm-NLPLPpackingproduct}, there exists a $(t_2c_1c_2,t_1-1)$ NLP-packing in $G_1 \times G_2$.
\item
By Lemma~\ref{lem-NLPpackingunion} with $s=\frac{t_1}{t_2}$, there exists a $(\frac{t_1}{t_2}c_1,t_2-1)$ NLP-packing in $G_1$. Then by Theorem~\ref{thm-NLPLPpackingproduct}, there exists a $(t_1c_1c_2,t_2-1)$ NLP-packing in $G_1 \times G_2$.
\end{enumerate}
\end{proof}

The product constructions of Theorems~\ref{thm-LPpackingproduct} and~\ref{thm-NLPLPpackingproduct} both combine two packings to form a packing in the direct product of the starting groups. Theorem~\ref{thm-LPpackingproduct} combines two LP-packings to form an LP-packing, whereas Theorem~\ref{thm-NLPLPpackingproduct} combines an NLP-packing and an LP-packing to form an NLP-packing.
We note in passing that we can likewise combine two NLP-packings to form a collection of regular Latin square type PDSs that partitions the nonidentity elements of the product group.

\begin{proposition}[{\cite[Thm.~2.3]{P08DCC}}]\label{prop-NLPpackingproduct}
For $j=1,2$, suppose there exists a $(c_j,t-1)$ NLP-packing in an abelian group $G_j$ of order~$t^2c_j^2$. Then there exists a collection $\{K_1,\dots,K_{t-1}\}$ of $t-1$ disjoint regular $(t^2c_1c_2,tc_1c_2)$ Latin square type PDSs in $G_1 \times G_2$ for which $G_1 \times G_2 -1_{G_1 \times G_2}-\sum_{\ell=1}^{t-1}K_\ell$ is a regular $(t^2c_1c_2,tc_1c_2+1)$ Latin square type PDS in~$G_1 \times G_2$.
\end{proposition}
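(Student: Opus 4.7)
The plan is to mimic the product construction of Theorem~\ref{thm-NLPLPpackingproduct}, absorbing the identity element of each $G_j$ into the exceptional subset of the given NLP-packing to play the role previously assigned to the subgroup~$U_2$.

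For $j=1,2$, let $\{P_{j,1}, \dots, P_{j,t-1}\}$ be the given NLP-packing and set $P_{j,0} = G_j - 1_{G_j} - \sum_{i=1}^{t-1} P_{j,i}$, the exceptional regular $(tc_j, c_j - 1)$ negative Latin square type PDS from Definition~\ref{def-NLPpacking}. Introduce auxiliary subsets $R_{j,0} = 1_{G_j} + P_{j,0}$ and $R_{j,i} = P_{j,i}$ for $i = 1, \dots, t-1$. Combining the character sums of $P_{j,0}$ given by Lemma~\ref{lem-LatinPDS} with Lemma~\ref{lem-NLPpacking}, every nonprincipal character $\chi_j$ of $G_j$ satisfies $\chi_j(R_{j,i}) \in \{c_j, -(t-1)c_j\}$, and since the $R_{j,i}$ partition $G_j$ their character sums necessarily sum to~$0$; hence the multiset $\{\chi_j(R_{j,0}), \dots, \chi_j(R_{j,t-1})\}$ equals $\{-(t-1)c_j, c_j, \dots, c_j\}$. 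That is, the $R_{j,i}$ mimic the character behaviour of a $(c_j, t)$ LP-packing without actually being one.

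Now define $K_\ell = \sum_{i=0}^{t-1} R_{1,i} R_{2,i+\ell}$ for $\ell = 0, 1, \dots, t-1$, with subscripts reduced modulo~$t$. Because the $R_{j,i}$ partition $G_j$, a direct coefficient comparison shows that the $K_\ell$ partition $G_1 \times G_2$; moreover $1_{G_1 \times G_2}$ lies in $K_0$ alone, since $1_{G_j}$ belongs only to $R_{j,0}$, forcing $(1_{G_1}, 1_{G_2})$ to appear in $R_{1,i} R_{2,i+\ell}$ precisely when $i = \ell = 0$. A three-case character-sum calculation, split according to whether $\chi_1$ and $\chi_2$ are principal and modelled directly on the proof of Theorem~\ref{thm-NLPLPpackingproduct}, then yields for every nonprincipal character $\chi$ of $G_1 \times G_2$ that
\[
\{\chi(K_0), \dots, \chi(K_{t-1})\} = \{t(t-1)c_1c_2, -tc_1c_2, \dots, -tc_1c_2\}.
\]

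Relabeling so that the exceptional subset $K_0$ is moved to the end, Lemma~\ref{lem-LatinPDS} immediately certifies that $K_1, \dots, K_{t-1}$ are regular $(t^2c_1c_2, tc_1c_2)$ Latin square type PDSs in $G_1 \times G_2$. The identity $\chi(K_0 - 1_{G_1 \times G_2}) = \chi(K_0) - 1$ shifts the admissible character values of $K_0 - 1_{G_1 \times G_2}$ to $\{t(t-1)c_1c_2 - 1, -(tc_1c_2 + 1)\}$, so a second application of Lemma~\ref{lem-LatinPDS} shows that $G_1 \times G_2 - 1_{G_1 \times G_2} - \sum_{\ell = 1}^{t-1} K_\ell = K_0 - 1_{G_1 \times G_2}$ is a regular $(t^2c_1c_2, tc_1c_2 + 1)$ Latin square type PDS. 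The main obstacle is discovering the balancing trick of absorbing $1_{G_j}$ into $R_{j,0}$; once this substitution is in place, the remaining calculations are straightforward variations on the earlier product constructions.
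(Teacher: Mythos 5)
Your proposal is correct and takes essentially the same approach as the paper's (outlined) proof: the paper likewise defines $P_{j,t-1}=G_j-\sum_{i=0}^{t-2}P_{j,i}$, which is exactly your $R_{j,0}$ (the exceptional negative Latin square type PDS with the identity absorbed), and forms $K_\ell=\sum_{i}P_{1,i}P_{2,i+\ell}$, finishing with the same character-sum calculation modelled on Theorem~\ref{thm-NLPLPpackingproduct} and an appeal to Lemma~\ref{lem-LatinPDS}.
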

\begin{proof}[Proof (Outline)]
For $j=1,2$, let $\{P_{j,0},\dots,P_{j,t-2}\}$ be a $(c_j,t-1)$ NLP-packing in~$G_j$ and define $P_{j,t-1} = G_j -\sum_{i=0}^{t-2}P_{j,i}$. Define
\[
K_\ell = \sum_{i=0}^{t-1} P_{1,i}P_{2,i+\ell} \quad \mbox{for $\ell = 1,\dots,t-1$},
\]
where the subscript $i+\ell$ is reduced modulo~$t$.
Model the rest of the proof on that of Theorem~\ref{thm-NLPLPpackingproduct},
distinguishing the case that $\chi|_{G_1}$ is principal on $G_1$ from the case that $\chi|_{G_1}$ is nonprincipal on~$G_1$, and applying Lemma~\ref{lem-LatinPDS}.
\end{proof}

\begin{remark}\label{rem-productconstructions}
In the construction of Proposition~\ref{prop-NLPpackingproduct}, if we could identify an order $t^2c_1c_2$ subgroup $U$ of $G_1 \times G_2$ contained in $G_1 \times G_2 - \sum_{\ell=1}^{t-1}K_\ell$ for which
$G_1 \times G_2 -U - \sum_{\ell=1}^{t-1}K_\ell$ is a regular $(t^2c_1c_2,tc_1c_2)$ Latin square type PDS in $G_1 \times G_2$, then we could conclude that $\{ K_1, \ldots, K_{t-1}, G_1 \times G_2 - U - \sum_{\ell=1}^{t-1}K_\ell \}$ is a $(tc_1c_2,t)$ LP-packing in $G_1 \times G_2$ relative to $U$. However, we do not know how to identify such a subgroup~$U$ (see also Remark~\ref{rem-identifyU}).
\end{remark}

We now apply Theorem~\ref{thm-NLPLPpackingproduct} to the small NLP-packings of Example~\ref{ex-NLPpackingsmall} and LP-packings from Corollaries~\ref{coro-LPpackingeven} and~\ref{coro-LPpackingodd} to produce two infinite families of NLP-packings.

\begin{corollary}\label{coro-NLPpacking}
Let $u,w$ be nonegative integers. Let $s_3,\dots,s_v$ be nonnegative integers (possibly all zero).

\begin{enumerate}[$(i)$]

\item
Let $u \ne 1$, let $w \not\in \{1,2,3\}$ and let each $s_i \ne 1$.
Let $G$ be either $\Z_2^{2u+4} \times \Z_4^w \times \prod_{i=3}^v \Z_{2^i}^{2s_i}$ or $\Z_2^{2u} \times \Z_4^{w+2} \times \prod_{i=3}^v \Z_{2^i}^{2s_i}$, and let $n=\sqrt{|G|}$.
Then there exists an $\big(\frac{n}{4},3\big)$ NLP-packing in~$G$.

\item
Let $u \ne 0$ and let $w \ne 1$. Let $G = \Z_3^{2u} \times \Z_9^w \times \prod_{i=3}^v \Z_{3^i}^{2s_i}$ and let $n = \sqrt{|G|}$.
Then there exists an $\big(\frac{n}{3},2\big)$ NLP-packing in~$G$.

\end{enumerate}
\end{corollary}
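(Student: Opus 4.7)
The plan is to apply the product construction of Theorem~\ref{thm-NLPLPpackingproduct}, combining a small NLP-packing from Example~\ref{ex-NLPpackingsmall} with an LP-packing extracted from Corollary~\ref{coro-LPpackingeven} or~\ref{coro-LPpackingodd}.

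For part~$(i)$, I take $G_1=\Z_2^4$ in the first case and $G_1=\Z_4^2$ in the second, each carrying a $(1,3)$ NLP-packing by Example~\ref{ex-NLPpackingsmall}$(i)$--$(ii)$. In both cases the complementary factor is $G_2 = \Z_2^{2u} \times \Z_4^w \times \prod_{i=3}^v \Z_{2^i}^{2s_i}$, so that $G = G_1 \times G_2$, and I seek a $(c_2,4)$ LP-packing in~$G_2$ relative to some subgroup of order~$4c_2$. When $w$ is even I apply Corollary~\ref{coro-LPpackingeven} to $G_2$ with parameters $s'_1=u$, $s'_2=w/2$, $s'_i=s_i$ for $i\ge3$; when $w$ is odd, and hence $w\ge5$ by hypothesis, I treat $\Z_4^w$ as the $\Z_{p^2}^w$ factor in Corollary~\ref{coro-LPpackingodd}. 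A direct check shows that the hypotheses $u\ne1$, $w\notin\{1,2,3\}$, $s_i\ne1$ force each positive $s'_j$ to satisfy $s'_j\ge2$ and, in the odd subcase, force $\sigma(2,w)\ge2$. Hence the relevant minimum $m$ satisfies $m\ge2$, and applying the corollary with $j=m-2$ yields an $(n_2/4,\,4)$ LP-packing in~$G_2$, where $n_2=\sqrt{|G_2|}=n/4$. Theorem~\ref{thm-NLPLPpackingproduct} with $t=4$, $c_1=1$, $c_2=n_2/4$ then delivers the required $(n/4,3)$ NLP-packing in~$G$.

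For part~$(ii)$, I take $G_1=\Z_3^2$ with its $(1,2)$ NLP-packing from Example~\ref{ex-NLPpackingsmall}$(iii)$, and (using $u\ge1$) set $G_2=\Z_3^{2(u-1)}\times\Z_9^w\times\prod_{i=3}^v\Z_{3^i}^{2s_i}$. The hypothesis $w\ne1$ is exactly what is needed so that either $w$ is even (then Corollary~\ref{coro-LPpackingeven} applies to $G_2$) or $w\ge3$ is odd (then Corollary~\ref{coro-LPpackingodd} applies with $\Z_9^w$ as the $\Z_{p^2}^w$ factor, using $\sigma(3,w)\ge1$ for $w\ge3$). In either case the corollary supplies an $(n_2/3,\,3)$ LP-packing in~$G_2$, and Theorem~\ref{thm-NLPLPpackingproduct} with $t=3$ produces the claimed $(n/3,2)$ NLP-packing in~$G$.

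The bulk of the effort is careful case bookkeeping: for each combination of parities one must verify that the arithmetic hypotheses on $u$, $w$, and the $s_i$ translate into the inequalities demanded by Corollary~\ref{coro-LPpackingeven} or~\ref{coro-LPpackingodd}. The main subtlety is the odd-rank subcase for~$w$ in part~$(i)$: a factor $\Z_4^w$ with $w\in\{2,3\}$ would leave insufficient room for $m\ge 2$ in either corollary, which is precisely why the exclusion $w\notin\{1,2,3\}$ must be strictly stronger than the naive $w\ne1$. One must also address separately the degenerate boundary case where $G_2$ is trivial (e.g.\ $u=w=0$ and all $s_i=0$ in part~$(i)$, or $u=1$, $w=0$ and all $s_i=0$ in part~$(ii)$), in which $G=G_1$ and the desired packing is simply the starting NLP-packing from Example~\ref{ex-NLPpackingsmall}.
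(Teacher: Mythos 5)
Your proposal is correct and follows essentially the same route as the paper's proof: a product via Theorem~\ref{thm-NLPLPpackingproduct} of the small NLP-packings of Example~\ref{ex-NLPpackingsmall} with a $(c_2,4)$ (resp.\ $(c_2,3)$) LP-packing in the complementary factor obtained from Corollary~\ref{coro-LPpackingeven} when $w$ is even and Corollary~\ref{coro-LPpackingodd} when $w$ is odd, with the trivial-complement boundary case handled separately. The parameter bookkeeping ($m\ge2$ with $j=m-2$ in part $(i)$, $m\ge1$ with $j=m-1$ in part $(ii)$) matches the paper exactly.
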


\begin{proof}
\quad
\begin{enumerate}[$(i)$]

\item
In the case that $u,w$, and each $s_i$ is zero, the result is given by Example~\ref{ex-NLPpackingsmall}~$(i),(ii)$. Otherwise, let $G' = \Z_2^{2u} \times \Z_4^w \times \prod_{i=3}^v \Z_{2^i}^{2s_i}$ and construct an $\big(\frac{n/4}{4},4\big)$ LP-packing in $G'$ relative to a subgroup of order~$n/4$.
To do so when $w$ is even, take $p=2$ and $(s_1,s_2) = (u,\frac{w}{2})$ and $j = m-2 \ge 0$ in Corollary~\ref{coro-LPpackingeven}.
To do so when $w$ is odd, note that $\s(2,w) = \frac{w-1}{2} \ge 2$ and take $p=2$ and $(s_1,s_2) = (u,0)$ and $j=m-2 \ge 0$ in Corollary~\ref{coro-LPpackingodd}.

Now let $H$ be either $\Z_2^4$ or $\Z_4^2$.  Use Theorem~\ref{thm-NLPLPpackingproduct} to combine the $(1,3)$ NLP-packing in~$H$ given by Example~\ref{ex-NLPpackingsmall}~$(i),(ii)$ with the constructed $\big(\frac{n/4}{4},4\big)$ LP-packing in $G'$ to produce the required $\big(\frac{n}{4},3\big)$ NLP-packing in $H \times G'$.

\item
In the case that $u=1$, and $w$ and each of the $s_i$ is zero, the result is given by Example~\ref{ex-NLPpackingsmall}~$(iii)$.
Otherwise, let $G' = \Z_3^{2u-2} \times \Z_9^w \times \prod_{i=3}^v \Z_{3^i}^{2s_i}$ and construct an $\big(\frac{n/3}{3},3\big)$ LP-packing in $G'$ relative to a subgroup of order $n/3$.
To do so when $w$ is even, take $p=3$ and
$(s_1,s_2) = (u-1,\frac{w}{2})$ and $j = m-1 \ge 0$ in Corollary~\ref{coro-LPpackingeven}.
To do so when $w$ is odd, note that $\s(3,w) \ge 1$ and take $p = 3$ and $(s_1,s_2) = (u-1,0)$ and $j=m-1 \ge 0$ in Corollary~\ref{coro-LPpackingodd}.

Now use Theorem~\ref{thm-NLPLPpackingproduct} to combine the $(1,2)$ NLP-packing in~$\Z_3^2$ given by Example~\ref{ex-NLPpackingsmall}~$(iii)$ with the constructed $\big(\frac{n/3}{3},3\big)$ LP-packing in $G'$ to produce the required $\big(\frac{n}{3},2\big)$ NLP-packing in~$G$.

\end{enumerate}
\end{proof}

\begin{remark}\label{rem-PDSproduct}
Using Remark~\ref{rem-NLPpacking}~$(i)$, we can rephrase Result~\ref{res-PDSproductNLP} in terms of $(\frac{n}{t},t-1)$ NLP-packings in groups of order~$n^2$. As we now describe, we then see that Corollary~\ref{coro-NLPpacking} is significantly more general than Result~\ref{res-PDSproductNLP}.

Result~\ref{res-PDSproductNLP}~$(i)$ and Corollary~\ref{coro-NLPpacking}~$(i)$ each construct NLP-packings with parameters $\big(\frac{n}{4},3\big)$. The former result is restricted to groups of the form $\Z_2^{4u} \times \Z_4^{2w} \times \prod_{i=2}^v \Z_{2^{2i}}^{4s_{2i}}$ for $u+w\ge 1$, whereas the latter result applies to the forms
$\Z_2^{2u+4} \times \Z_4^w \times \prod_{i=3}^v \Z_{2^i}^{2s_i}$ and
$\Z_2^{2u} \times \Z_4^{w+2} \times \prod_{i=3}^v \Z_{2^i}^{2s_i}$
for $u \ne 1$ and $w \not\in \{1,2,3\}$ and each $s_i \ne 1$.

Similarly, Result~\ref{res-PDSproductNLP}~$(ii)$ and Corollary~\ref{coro-NLPpacking}~$(ii)$ each construct NLP-packings with parameters $\big(\frac{n}{3},2\big)$. The former result is restricted to groups of the form
$\Z_3^{2u+2} \times \prod_{i=1}^v \Z_{3^{2i}}^{2s_{2i}}$,
whereas the latter result applies to the form
$\Z_3^{2u} \times \Z_9^w \times \prod_{i=3}^v \Z_{3^i}^{2s_i}$ for $u \ne 0$ and $w \ne 1$.
\end{remark}

\section{Relationship with Strongly Regular Bent Functions}\label{sec7}
In this section, we examine how LP-packings and NLP-packings are related to the hyperbolic and elliptic strongly regular bent functions introduced by Chen and Polhill in~\cite{CP13}.

\begin{definition}[{\cite[Thm.~3.7]{CP13}}]\label{def-srbf}
Let $G$ and $H$ be abelian groups. Let $f$ be a surjective function from $G$ to $H$ satisfying $f(1_G)=1_H$, and let $f^{-1}$ be the pre-image of~$f$.
\begin{enumerate}[$(i)$]
\item
$f$ is a \emph{hyperbolic strongly regular bent function} if
$f^{-1}(h)$ is a regular $\big(\sqrt{|G|},\frac{\sqrt{|G|}}{|H|}\big)$ Latin square type PDS for each $h \ne 1_H$, and
$f^{-1}(1_H)-1_G$ is a regular $\big(\sqrt{|G|},\frac{\sqrt{|G|}}{|H|}+1\big)$ Latin square type PDS.

\item
$f$ is an \emph{elliptic strongly regular bent function} if
$f^{-1}(h)$ is a regular $\big(\sqrt{|G|},\frac{\sqrt{|G|}}{|H|}\big)$ negative Latin square type PDS for each $h \ne 1_H$, and
$f^{-1}(1_H) - 1_G$ is a regular $\big(\sqrt{|G|},\frac{\sqrt{|G|}}{|H|}-1\big)$ negative Latin square type PDS.
\end{enumerate}
\end{definition}

We can relate hyperbolic strongly regular bent functions to LP-packings, and elliptic strongly regular bent functions to NLP-packings.
\begin{proposition}\label{prop-bent}
Let $G$ be an abelian group of order $t^2c^2$, let $U$ be a subgroup of $G$ of order~$tc$, and let $H$ be an abelian group of order~$t$.
\begin{enumerate}[$(i)$]
\item
If there exists a $(c,t)$ LP-packing in $G$ relative to~$U$, then there exists a hyperbolic strongly regular bent function from $G$ to~$H$.

\item
The existence of a $(c,t-1)$ NLP-packing in $G$ is equivalent to the existence of an elliptic strongly regular bent function from $G$ to~$H$.
\end{enumerate}

\end{proposition}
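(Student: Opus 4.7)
The plan is to translate the set-theoretic data of a packing directly into a function $f: G \to H$ by using the subsets as preimages, and to invert this translation for part $(ii)$. Throughout, set $n = tc = \sqrt{|G|}$ and label the nonidentity elements of $H$ as $h_1,\dots,h_{t-1}$ so that $|H|=t$ gives $n/|H|=c$.

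For part $(i)$, given a $(c,t)$ LP-packing $\{P_1,\dots,P_t\}$ in $G$ relative to $U$, I would define $f:G \to H$ by setting $f(g)=h_i$ for $g \in P_i$ with $1 \le i \le t-1$, and $f(g)=1_H$ for $g \in P_t \cup U$. Since Definition~\ref{def-LPpacking} tells us that $\{P_1,\dots,P_t\}$ and $U$ together partition $G$, and $1_G \in U$, the function $f$ is well-defined and surjective with $f(1_G)=1_H$. For each $i=1,\dots,t-1$ the preimage $f^{-1}(h_i)=P_i$ is a regular $(n,c)$ Latin square type PDS by hypothesis, while $f^{-1}(1_H)-1_G = P_t + U - 1_G$ is a regular $(n,c+1)$ Latin square type PDS by Lemma~\ref{lem-LPpackingtoPDS}$(ii)$. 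Hence $f$ is a hyperbolic strongly regular bent function.

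For part $(ii)$, in the forward direction, given a $(c,t-1)$ NLP-packing $\{P_1,\dots,P_{t-1}\}$ in $G$, let $P_0 = G - 1_G - \sum_{i=1}^{t-1} P_i$, which by Definition~\ref{def-NLPpacking} is a regular $(n,c-1)$ negative Latin square type PDS. The sets $\{1_G\},P_0,P_1,\dots,P_{t-1}$ partition $G$, so defining $f(g)=h_i$ for $g \in P_i$ and $f(g)=1_H$ for $g \in \{1_G\} \cup P_0$ yields a well-defined surjection $f:G \to H$ with $f(1_G)=1_H$, whose preimages satisfy Definition~\ref{def-srbf}$(ii)$. Conversely, given an elliptic strongly regular bent function $f:G \to H$, set $P_i = f^{-1}(h_i)$ for $i=1,\dots,t-1$. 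By Definition~\ref{def-srbf}$(ii)$ each $P_i$ is a regular $(n,c)$ negative Latin square type PDS; they are pairwise disjoint as preimages of distinct elements; and $G - 1_G - \sum_{i=1}^{t-1} P_i = f^{-1}(1_H)-1_G$ is a regular $(n,c-1)$ negative Latin square type PDS. This is exactly a $(c,t-1)$ NLP-packing.

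There is no substantive obstacle: the proof is a direct dictionary between partitions of $G$ and functions $G \to H$, with the PDS conditions on each block transcribed via Definitions~\ref{def-srbf}, \ref{def-LPpacking}, \ref{def-NLPpacking}, and Lemma~\ref{lem-LPpackingtoPDS}$(ii)$. The only point worth flagging is the asymmetry between parts $(i)$ and $(ii)$: a hyperbolic bent function gives the $t-1$ preimages of nonidentity elements as $(n,c)$ Latin square type PDSs and $f^{-1}(1_H)-1_G$ as an $(n,c+1)$ Latin square type PDS, but does not single out a subgroup $U$ of order~$n$ inside $f^{-1}(1_H)$, nor split off a further $P_t$. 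Thus part $(i)$ must be stated as a one-way implication; recovering an LP-packing from a hyperbolic SRBF would require the extra datum of such a subgroup $U$, which is not part of Definition~\ref{def-srbf}$(i)$.
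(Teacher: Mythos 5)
Your proposal is correct and follows essentially the same route as the paper: part $(i)$ is the paper's construction verbatim (the paper cites Remark~\ref{rem-LPpacking}$(i)$, which is just Lemma~\ref{lem-LPpackingtoPDS}$(ii)$ packaged as a partition of $G-1_G$), and part $(ii)$ is the same two-way dictionary between the NLP-packing blocks and the fibres of $f$. Your closing observation about why $(i)$ is only one-way matches the paper's Remark~\ref{rem-identifyU}.
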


\begin{proof}
Write $H=\{1_H,h_1,h_2,\ldots,h_{t-1}\}$.
\begin{enumerate}[$(i)$]
\item
Let $\{P_1,\dots,P_t\}$ be a $(c,t)$ LP-packing in $G$ relative to~$U$, and define the surjective function $f:G \rightarrow H$ by
\[
    f(g)=\begin{cases}
      h_i & \mbox{if $g \in P_i$ for $i \in \{1,\dots,t-1\}$,} \\
      1_H & \mbox{if $g \in P_t + U$}.
    \end{cases}
\]
Then by Remark~\ref{rem-LPpacking}~$(i)$, $f$ is a hyperbolic strongly regular bent function.

\item
Suppose firstly that $\{P_1,\dots,P_{t-1}\}$ is a $(c,t-1)$ NLP-packing in $G$, and define the surjective function $f:G \rightarrow H$ by
\[
    f(g)=\begin{cases}
      h_i & \mbox{if $g \in P_i$,} \\
      1_H & \mbox{otherwise}.
    \end{cases}
\]
Then by Definition~\ref{def-NLPpacking}, $f$ is an elliptic strongly regular bent function from $G$ to~$H$.

Conversely, suppose that $f:G \rightarrow H$ is an elliptic strongly regular bent function, and define subsets $P_1,\dots,P_{t-1}$ of $G$ by
\[
P_i = f^{-1}(h_i) \quad \mbox{for $i=1,\dots,t-1$}.
\]
Then by Definition~\ref{def-NLPpacking}, $\{P_1,\dots,P_{t-1}\}$ is a $(c,t-1)$ NLP-packing in~$G$ from $G$ to~$H$.
\end{enumerate}
\end{proof}

\begin{remark}\label{rem-identifyU}
Proposition~\ref{prop-bent} shows the equivalence of an NLP-packing and an elliptic strongly regular bent function, but not the equivalence of an LP-packing and a hyperbolic strongly regular bent function~$f$.
In order to establish the converse to Proposition~\ref{prop-bent}~$(i)$, we would need to identify an order $\sqrt{|G|}$ subgroup $U$ of $G$ contained in $f^{-1}(1_H)$ for which $f^{-1}(1_H)-U$ is a regular $\big(\sqrt{|G|},\frac{\sqrt{|G|}}{|H|}\big)$ Latin square type PDS in~$G$.
The additional conditions on $f$ required to guarantee the existence of such a subgroup~$U$ could be substantial, as we discuss in Example~\ref{ex-ternarybent}~$(i)$ below.
\end{remark}

The following example uses strongly regular bent functions to construct collections of PDSs, and provides an alternative derivation of a special case of Corollary~\ref{coro-NLPpacking}~$(ii)$.
\begin{example}[{\rm \cite[Thm.~1, Cor.~3]{TPF}}]\label{ex-ternarybent}
Let $p$ be a prime, and let $\zp$ be a primitive $p^{\rm {th}}$ root of unity.
Let $g$ be a function from $\F_{p^n}$ to~$\F_p$, and let
$\Tr$ be the trace function from $\F_{p^n}$ to~$\F_p$.
The \emph{Walsh transform} $\cW_g : \F_{p^n} \rightarrow \CC$ of $g$ is given by
$$
\cW_g(b)=\sum_{x \in \F_{p^n}} \zp^{g(x)+\Tr(bx)}.
$$
The function $g$ is \emph{bent} if
$|\cW_g(b)|=p^{\frac{n}{2}}$ for all $b \in \F_{p^n}$.
A bent function $g$ is \emph{weakly regular} if there exists a function $g^*: \F_{p^n} \rightarrow \F_p$ satisfying
\[
\cW_g(b)=\mu p^{\frac{n}{2}}\zp^{g^*(b)} \quad \mbox{for some $\mu \in \CC$ with $|\mu|=1$ and for all $b \in \F_{p^n}$},
\]
and is \emph{regular} (a much stronger condition) if there exists a function $g^*: \F_{p^n} \rightarrow \F_p$ such that $\cW_g(b)= p^{\frac{n}{2}}\zp^{g^*(b)}$ for all $b \in \F_{p^n}$.

Suppose $f: \F_{3^{2s}} \rightarrow \F_3$ is a weakly regular
bent function satisfying $f(-x)=f(x)$ and $f(0)=0$.
Then exactly one of the following holds:
\begin{enumerate}[$(i)$]
\item
$|f^{-1}(0)|=3^{2s-1}+2\cdot3^{s-1}$, and $f$ is a hyperbolic strongly regular bent function from $\F_{3^{2s}}$ to~$\F_3$:
each of $f^{-1}(1)$ and $f^{-1}(2)$ is a regular $(3^s,3^{s-1})$ Latin square type PDS in $\F_{3^{2s}}$, and
$f^{-1}(0) \sm \{0\}$ is a regular $(3^s,3^{s-1}+1)$ Latin square type PDS in $\F_{3^{2s}}$.

\item
$|f^{-1}(0)|=3^{2s-1}-2\cdot3^{s-1}$, and $f$ is an elliptic strongly regular bent function from $\F_{3^{2s}}$ to $\F_3$:
each of $f^{-1}(1)$ and $f^{-1}(2)$ is a regular $(3^s,3^{s-1})$ negative Latin square type PDS in $\F_{3^{2s}}$, and
$f^{-1}(0) \sm \{0\}$ is a regular $(3^s,3^{s-1}-1)$ negative Latin square type PDS in $\F_{3^{2s}}$;
equivalently, $\big\{f^{-1}(1), f^{-1}(2)\big\}$ is a
$(3^{s-1},2)$ NLP-packing in~$(\F_{3^{2s}},+) = \Z_3^{2s}$.
\end{enumerate}

Following Remark~\ref{rem-identifyU}, we mention that in order to obtain a $(3^{s-1},3)$ LP-packing in the group $(\F_3^{2s},+) = \Z_3^{2s}$ relative to $\Z_3^s$ from the hyperbolic strongly regular bent function $f$ of $(i)$, we would firstly need to identify an order~$3^s$ subgroup $U$ of $\Z_3^{2s}$ contained in $f^{-1}(0)$, and secondly require $f^{-1}(0)-U$ to be a regular $(3^s,3^{s-1})$ Latin square type PDS in~$\Z_3^{2s}$. The first condition alone requires $f$ to be normal {\rm\cite[p.~106]{CMP}}, which in combination with the property that $f$ is weakly regular requires in turn that $f$ should be regular {\rm\cite[Thm.~6]{CMP}}.
This demonstrates that, in general, an LP-packing is a significantly more constrained structure than a hyperbolic strongly regular bent function.
\end{example}

\begin{remark}\label{rem-stronglyregular}
\quad
\begin{enumerate}[$(i)$]

\item
In {\rm \cite[Lemma 3.15]{CP13}}, an incomplete attempt was made to lift a hyperbolic strongly regular bent function on $\Z_p^{2a}$ to a hyperbolic strongly regular bent function on $\Z_{p^2}^{2a}$. We were able to accomplish this lifting in Theorem~\ref{thm-LPpackinguniform}, using the more constrained structure of an LP-packing. This underscores the importance of identifying the role of the subgroup $U$ in the definition of an LP-packing.

\item
{\rm Corollaries~\ref{coro-LPpackingeven}, ~\ref{coro-LPpackingodd} and~\ref{coro-NLPpacking}} recover the hyperbolic and elliptic strongly regular bent functions of {\rm\cite[Prop.~3.13]{CP13}} and strengthen those of {\rm\cite[Prop.~3.14]{CP13}}. The hyperbolic and elliptic strongly regular bent functions constructed in {\rm\cite[Prop.~3.12]{CP13}} have codomain of size~$2$, and are equivalent to reversible Hadamard difference sets. We examine this connection in more detail in Section~\ref{sec8} below.
\end{enumerate}
\end{remark}

\section{Relationship with reversible Hadamard difference sets}\label{sec8}
In this section, we examine how LP-packings and NLP-packings are closely related to reversible Hadamard difference sets.

\begin{definition}\label{def-HDS}
Let $G$ be a group $G$ of order~$v$.
A \emph{$(v,k,\la)$-difference set in $G$} is a $(v,k,\la,\la)$ partial difference set in~$G$.
A \emph{Hadamard difference set} has parameters
$(v,k,\la) = \big(4c^2,c(2c-1),c(c-1)\big)$ for some integer~$c$.
A difference set $D$ is \emph{reversible} if $D=D^{(-1)}$.
\end{definition}
\noindent
By Definition~\ref{def-pds}, a reversible $(v,k,\la)$-difference set that does not contain~$1_G$ is the same as a regular $(v,k,\la,\la)$ partial difference set.
The following result is a direct consequence of Definition~\ref{def-NLPpacking} and Lemmas~\ref{lem-pdschar} and~\ref{lem-LatinPDS}.

\begin{lemma}\label{lem-NLPrevHDS}
Let $c$ be a positive integer, and let $P$ be a $c(2c-1)$-subset of an abelian group $G$ of order~$4c^2$.
Then the following statements are equivalent.
\begin{enumerate}[$(i)$]
\item
$P$ is a $\big(4c^2,c(2c-1),c(c-1)\big)$ reversible Hadamard difference set in~$G$, and $1_G \notin P$
\item
$\chi(P) \in \{-c,c\}$ for all nonprincipal characters $\chi$ of $G$

\item
$\{G-P\}$ is a $(c,1)$ NLP-packing in~$G$.

\end{enumerate}
\end{lemma}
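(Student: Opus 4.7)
The plan is to treat statement~(ii) as a character-theoretic hinge: I would prove (i)~$\Leftrightarrow$~(ii) by direct application of Lemma~\ref{lem-pdschar}, and (ii)~$\Leftrightarrow$~(iii) by applying Lemma~\ref{lem-LatinPDS} to each of the two partial difference sets that arise from Definition~\ref{def-NLPpacking}.

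For (i)~$\Leftrightarrow$~(ii), I would instantiate Lemma~\ref{lem-pdschar} with Hadamard parameters $(v,k,\la,\mu) = (4c^2,\,c(2c-1),\,c(c-1),\,c(c-1))$. The consistency constraint $k^2 = k-\mu+(\la-\mu)k+\mu v$ becomes $c^2(2c-1)^2 = c^2 + 4c^3(c-1)$, which holds identically, while the discriminant $(\la-\mu)^2+4(k-\mu) = 4c^2$ is nonnegative. Lemma~\ref{lem-pdschar} then asserts that $P$ is a regular $(4c^2,\,c(2c-1),\,c(c-1),\,c(c-1))$ PDS (equivalently, a reversible Hadamard difference set not containing $1_G$) if and only if $\chi(P) = \tfrac{1}{2}(0 \pm 2c) = \pm c$ for every nonprincipal character~$\chi$, which is exactly~(ii).

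For (ii)~$\Leftrightarrow$~(iii), I would apply Lemma~\ref{lem-LatinPDS} with $\ep=-1$ to each of the two sets required by Definition~\ref{def-NLPpacking} for the singleton collection $\{G-P\}$: namely $G-P$ (which must be a regular $(2c,c)$ negative Latin square type PDS) and $G-1_G-(G-P) = P-1_G$ (which must be a regular $(2c,c-1)$ negative Latin square type PDS). Direct computation gives $|G-P| = c(2c+1)$ and $|P-1_G| = (c-1)(2c+1)$, matching the required size $r(n+1)$ of Lemma~\ref{lem-LatinPDS} with $(n,r) = (2c,c)$ and $(2c,c-1)$, respectively. For any nonprincipal character~$\chi$ one has $\chi(G-P) = -\chi(P)$ and $\chi(P-1_G) = \chi(P) - 1$, so Lemma~\ref{lem-LatinPDS} reduces the two regularity requirements to the character criteria $-\chi(P) \in \{c,-c\}$ and $\chi(P) - 1 \in \{c-1,-(c+1)\}$. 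Both collapse to $\chi(P) \in \{-c,c\}$, and together they establish both conditions in Definition~\ref{def-NLPpacking} simultaneously.

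The main subtlety lies in the bookkeeping for the identity element: the regularity hypotheses embedded in both (i) and (iii) determine the placement of $1_G$, and this must be reconciled with the character pattern in (ii) via the Fourier inversion formula (Proposition~\ref{prop-fourier}). Once the sizes and regularity constraints are tracked precisely, the cyclic implications (i)~$\Rightarrow$~(ii)~$\Rightarrow$~(iii)~$\Rightarrow$~(i) follow from the character equivalences above.
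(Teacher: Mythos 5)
Your route is the same as the paper's, which simply declares the lemma ``a direct consequence'' of Definition~\ref{def-NLPpacking} and Lemmas~\ref{lem-pdschar} and~\ref{lem-LatinPDS}, and your character-sum computations (the parameter check for Lemma~\ref{lem-pdschar}, and the target values $\{c,-c\}$ and $\{c-1,-(c+1)\}$ for Lemma~\ref{lem-LatinPDS}) are all correct. The genuine gap is exactly the step you defer to the end --- the ``bookkeeping for the identity element'' --- and it cannot be closed as you describe. Condition $(ii)$ is equivalent to the bare statement ``$P$ is a reversible Hadamard difference set'': indeed $P$ is a reversible $\big(4c^2,c(2c-1),c(c-1)\big)$-difference set if and only if $PP^{(-1)}=c^2\cdot 1_G+c(c-1)G$ and $P=P^{(-1)}$, i.e.\ $\chi(P)$ is real with $|\chi(P)|^2=c^2$ for every nonprincipal $\chi$ --- and this carries no information about whether $1_G\in P$. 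By Proposition~\ref{prop-fourier}, $[P]_{1_G}=\frac{1}{4c^2}\big(c(2c-1)+\sum\chi(P)\big)$ (sum over nonprincipal $\chi$), and both values $0$ and $1$ are realized under $(ii)$: in $\Z_4=\lan x\ran$ with $c=1$, both $P=\{1_G\}$ and $P=\{x^2\}$ satisfy $(ii)$. So $(ii)$ does not imply the clause $1_G\notin P$ of $(i)$, and your appeal to Lemma~\ref{lem-pdschar} for $(ii)\Rightarrow(i)$ is unavailable, since that lemma is stated only under the hypothesis $1_G\notin P$.

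Worse, statement $(iii)$ forces $1_G\in P$: for $G-P$ to be a \emph{regular} PDS it must avoid $1_G$, and your own size computation $|P-1_G|=(c-1)(2c+1)=|P|-1$ silently assumes $1_G\in P$ (otherwise $P-1_G$ is not even a subset of $G$). This is incompatible with the clause $1_G\notin P$ in $(i)$, so the cycle $(i)\Rightarrow(ii)\Rightarrow(iii)\Rightarrow(i)$ cannot close for the statement as printed, and no reconciliation via Proposition~\ref{prop-fourier} will repair it. The paper's own application (Corollary~\ref{cor-revHDS}$(ii)$, which invokes the lemma for $D=\{1_{\Z_4}\}$) shows the intended reading is $1_G\in P$ in $(i)$, with the same membership condition adjoined to $(ii)$. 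Under that amendment your character computations do give a complete proof, provided you replace the appeal to Lemma~\ref{lem-pdschar} by the group-ring identity above, since Lemma~\ref{lem-pdschar} does not apply to sets containing~$1_G$.
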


The next result follows from Definition~\ref{def-LPpacking} and Lemmas~\ref{lem-LatinPDS} and~\ref{lem-NLPrevHDS}.

\begin{lemma}\label{lem-LPpackingHDS}
Let $c$ be a positive integer,
let $G$ be an abelian group of order $4c^2$, and let $U$ be a subgroup of order~$2c$. Then $P_1,P_2$ are disjoint $\big(4c^2,c(2c-1),c(c-1)\big)$ reversible Hadamard difference sets in $G$ whose union is~$G-U$ if and only if $\{P_1,P_2\}$ is a $(c,2)$ LP-packing in $G$ relative to~$U$.
\end{lemma}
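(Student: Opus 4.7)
The plan is to show that the lemma follows essentially by unwinding the definitions, with the key observation being that a $(2c,c)$ Latin square type PDS has $\lambda = \mu$.

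First I would compute the parameters of a regular $(2c,c)$ Latin square type PDS using Definition~\ref{def-LatinPDS} with $n=2c$ and $r=c$: this yields the parameter set $(4c^2,\,c(2c-1),\,c(c-1),\,c(c-1))$. In particular, $\lambda = \mu = c(c-1)$, so by Definition~\ref{def-HDS} a regular $(2c,c)$ Latin square type PDS in $G$ is exactly the same object as a $(4c^2,c(2c-1),c(c-1))$ reversible Hadamard difference set in $G$ that avoids $1_G$. (The reversibility $P = P^{(-1)}$ and the condition $1_G \notin P$ are both built into the regularity hypothesis.)

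For the forward direction, suppose $P_1, P_2$ are disjoint $(4c^2,c(2c-1),c(c-1))$ reversible Hadamard difference sets in $G$ with $P_1 + P_2 = G - U$. Since each $P_i$ satisfies $P_i = P_i^{(-1)}$, and since $|P_i| = c(2c-1) < 4c^2 - 1 = |G - 1_G|$ together with $P_1 \cup P_2 = G - U$ forces $1_G \in U$, we conclude that $1_G \notin P_i$, so each $P_i$ is regular. By the parameter computation above, each $P_i$ is then a regular $(2c,c)$ Latin square type PDS in $G$, and Definition~\ref{def-LPpacking} (with $t=2$) immediately gives that $\{P_1, P_2\}$ is a $(c,2)$ LP-packing in $G$ relative to $U$.

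For the converse, suppose $\{P_1, P_2\}$ is a $(c,2)$ LP-packing in $G$ relative to $U$. By Definition~\ref{def-LPpacking}, each $P_i$ is a regular $(2c,c)$ Latin square type PDS in $G$ and $P_1 + P_2 = G - U$ (which forces disjointness and the stated union). By the parameter computation, each $P_i$ is thus a $(4c^2,c(2c-1),c(c-1),c(c-1))$ regular PDS; since $\lambda = \mu$ here, Definition~\ref{def-HDS} and the relation $P_i = P_i^{(-1)}$ identify each $P_i$ as a reversible $(4c^2,c(2c-1),c(c-1))$ Hadamard difference set in $G$. There is no real obstacle in this proof: the whole argument is a bookkeeping exercise matching the two parameter descriptions, and the only subtle point to flag is the coincidence $\lambda = \mu$ that occurs precisely at $t=2$ and places the Latin square type PDS in the special Hadamard difference set regime covered by Definition~\ref{def-HDS}.
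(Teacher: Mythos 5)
Your proof is correct and is essentially the paper's argument, which the paper compresses into a one-line citation of Definition~\ref{def-LPpacking} together with Lemmas~\ref{lem-LatinPDS} and~\ref{lem-NLPrevHDS}; the only cosmetic difference is that you establish the identification of a regular $(2c,c)$ Latin square type PDS with a reversible Hadamard difference set avoiding $1_G$ by computing $\la=\mu=c(c-1)$ directly from Definition~\ref{def-LatinPDS}, whereas the paper routes the same identification through the character-sum criteria $\chi(P)\in\{-c,c\}$. One tiny tidy-up: the fact that $1_G\notin P_i$ follows simply because $U$ is a subgroup, so $1_G\in U$ and hence $1_G\notin G-U$; the cardinality comparison you invoke there is unnecessary.
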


We now use LP-packings in $2$-groups to produce infinite families of reversible Hadamard difference sets.
\begin{corollary}\label{cor-revHDS}
Let $u,w$ and $s_3,\dots,s_v$ be nonnegative integers (not all zero).
For each $i =3,\dots,v$, let $G_i=\Z_{2^i}^{2s_i}$ and let $U_i$ be a subgroup of $G_i$ of order~$2^{is_i}$.
Let $G = \Z_2^{2u} \times \Z_4^w \times \prod_{i=3}^v G_i$, let $U = \Z_2^u \times (2\Z_4)^w \times \prod_{i=3}^v U_i$, and let $|G| = 4c^2$. Then

\begin{enumerate}[$(i)$]
\item
for $w \ne 1$, there exist two disjoint
$\big(4c^2,c(2c-1),c(c-1)\big)$ reversible Hadamard difference sets in $G$ whose union is~$G-U$.

\item
there exists a $\big(4c^2,c(2c-1),c(c-1)\big)$ reversible Hadamard difference set in~$G$.
\end{enumerate}
\end{corollary}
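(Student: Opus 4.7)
\medskip\noindent
My plan is to prove $(i)$ by translating to an LP-packing question and invoking Lemma~\ref{lem-LPpackingHDS}, and to prove $(ii)$ by reducing to $(i)$ except in the exceptional case $w=1$, which I will handle using the product construction of Proposition~\ref{prop-NLPpackingproduct}.

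For $(i)$, by Lemma~\ref{lem-LPpackingHDS} it suffices to exhibit a $(c,2)$ LP-packing in $G$ relative to $U$. When $w$ is even (possibly zero), I would view $\Z_4^w$ as $\Z_{p^2}^{2(w/2)}$ with $p=2$ and apply Corollary~\ref{coro-LPpackingeven} with $s_1=u$, $s_2=w/2$, and the given $s_i$ for $i\geq 3$, choosing the subgroup factors $U_1=\Z_2^u$ and $U_2=(2\Z_4)^w$. The ``not all zero'' hypothesis forces the parameter $m$ there to satisfy $m\geq 1$, so taking $j=m-1$ produces the required $(c,2)$ LP-packing. When $w\geq 3$ is odd, I would instead invoke Corollary~\ref{coro-LPpackingodd} with external factor $\Z_4^w=\Z_{p^2}^w$ and inner parameters $s_1=u$, $s_2=0$, and the given $s_i$ for $i\geq 3$; here $\s(2,w)\geq 1$ ensures $m\geq 1$, and $j=m-1$ again yields the LP-packing.

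For $(ii)$, the case $w\neq 1$ follows immediately from $(i)$ by taking either of the two Hadamard difference sets produced. For $w=1$, decompose $G=\Z_4\times G'$ with $G'=\Z_2^{2u}\times\prod_{i=3}^v\Z_{2^i}^{2s_i}$. If $G'$ is trivial then $G=\Z_4$ and $\{x^2\}$ is the required reversible $(4,1,0)$ Hadamard difference set. Otherwise $G'$ satisfies the hypotheses of $(i)$ with its own ``$w$'' parameter equal to $0\neq 1$, so $(i)$ supplies a reversible Hadamard difference set in $G'$, equivalently (by Lemma~\ref{lem-NLPrevHDS}) a $(c/2,1)$ NLP-packing in $G'$. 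Pairing this with the $(1,1)$ NLP-packing in $\Z_4$ corresponding to $\{x^2\}$ and applying Proposition~\ref{prop-NLPpackingproduct} at $t=2$, $c_1=1$, $c_2=c/2$, the resulting regular $(2c,c)$ Latin square type PDS $K_1$ in $\Z_4\times G'=G$ satisfies $n=2r$, and so $\lambda=\mu$; hence $K_1$ is a reversible Hadamard difference set in $G$ with parameters $(4c^2,c(2c-1),c(c-1))$.

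The main obstacle I anticipate is the $w=1$ case of $(ii)$: the LP-packing constructions of Corollaries~\ref{coro-LPpackingeven} and~\ref{coro-LPpackingodd} do not cover a single $\Z_4$ factor, so an auxiliary product argument is essential. Verifying that the output $K_1$ of Proposition~\ref{prop-NLPpackingproduct} is both regular (identity-avoiding) and reversible is handled by the proposition itself, but it rests on the analogous properties of the two input Hadamard difference sets, and these must in turn be traced back through Lemma~\ref{lem-NLPrevHDS} and the construction produced by $(i)$ applied to $G'$.
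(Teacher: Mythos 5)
Your proof of part $(i)$, and the reduction of part $(ii)$ to part $(i)$ when $w \ne 1$, coincide with the paper's argument: both reduce $(i)$ via Lemma~\ref{lem-LPpackingHDS} to producing a $(c,2)$ LP-packing in $G$ relative to $U$, obtained from Corollary~\ref{coro-LPpackingeven} (even $w$, with $(s_1,s_2)=(u,w/2)$ and $j=m-1$) or Corollary~\ref{coro-LPpackingodd} (odd $w\ge 3$, using $\s(2,w)\ge 1$). Where you genuinely diverge is the $w=1$ case of $(ii)$. The paper keeps the LP-structure on $G'=\Z_2^{2u}\times\prod_{i=3}^v G_i$: it builds a $(\frac{c}{2},2)$ LP-packing in $G'$ directly from Corollary~\ref{coro-LPpackingeven} and combines it with the $(1,1)$ NLP-packing in $\Z_4$ via the NLP$\times$LP product (Theorem~\ref{thm-NLPLPpackingproduct}), obtaining a $(c,1)$ NLP-packing in $G$ and then converting back to a reversible Hadamard difference set with Lemma~\ref{lem-NLPrevHDS}. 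You instead first apply part $(i)$ to $G'$ to manufacture a reversible Hadamard difference set there, convert it to a $(\frac{c}{2},1)$ NLP-packing, and use the NLP$\times$NLP product (Proposition~\ref{prop-NLPpackingproduct}) with $t=2$; the single output $K_1$ is a regular $(2c,c)$ Latin square type PDS, for which $\la=\mu=c(c-1)$, so it is already the desired difference set. Both routes are valid and of comparable length. The paper's version records slightly more structure (a full $(c,1)$ NLP-packing in $G$, matching its Lemma~\ref{lem-NLPrevHDS} equivalence) and avoids the detour through part $(i)$ on $G'$; yours avoids re-invoking Lemma~\ref{lem-NLPrevHDS} at the final step because the Hadamard parameters drop out of the identity $\la=\mu$ for $(2c,c)$ Latin square type, and your choice of $\{x^2\}$ rather than $\{1_{\Z_4}\}$ as the trivial difference set in $\Z_4$ is actually the cleaner one for feeding into Lemma~\ref{lem-NLPrevHDS}, whose statement requires $1_G\notin P$. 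One small point to make explicit: in the nontrivial $w=1$ case you need $c$ even so that $c/2$ is an integer, which holds because $|G'|=c^2$ is then a nontrivial even power of~$2$.
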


\begin{proof}
\quad
\begin{enumerate}[$(i)$]
\item
By Lemma~\ref{lem-LPpackingHDS}, an equivalent statement is that for $w \ne 1$ there exists a $(c,2)$ LP-packing in $G$ relative to~$U$. To show this when $w$ is even, take $p=2$ and $(s_1,s_2)=(u,\frac{w}{2})$ and $j=m-1 \ge 0$ in Corollary~\ref{coro-LPpackingeven}.
To show this when $w$ is odd, note that $\s(2,w) = \frac{w-1}{2} \ge 1$ and take
$p=2$ and $(s_1,s_2) = (u,0)$ and $j=m-1 \ge 0$ in Corollary~\ref{coro-LPpackingodd}.

\item
In view of part $(i)$, we may assume $w=1$.
In the case that $u$ and each of the $s_i$ is zero, there is a trivial reversible Hadamard difference set $D$ in $\Z_4$ comprising just the identity element.
Otherwise, take $p=2$ and $(s_1,s_2)=(u,0)$ and $j=m-1\ge 0$ in Corollary~\ref{coro-LPpackingeven} to produce a $(\frac{c}{2},2)$ LP-packing in
$\Z_2^{2u} \times \prod_{i=3}^v G_i$ relative to some subgroup of order~$c$. Use Theorem~\ref{thm-NLPLPpackingproduct} to combine this with the $(1,1)$ NLP-packing $\{\Z_4-D\}$ in $\Z_4$ (as given by Lemma~\ref{lem-NLPrevHDS}) to produce a $(c,1)$ NLP-packing in~$G$, then apply Lemma~\ref{lem-NLPrevHDS}.
\end{enumerate}
\end{proof}

\begin{remark}\label{rem-revHDS}
\quad
\begin{enumerate}[$(i)$]
\item
Corollary~\ref{cor-revHDS}~$(i)$ significantly extends a previous construction for $G=\Z_{2^a}^2$ and noncyclic $U$ of order~$2^a$ {\rm \cite[Thm.~3.1]{DP}}.

\item
All abelian $2$-groups in which a reversible Hadamard difference set is known to exist are recovered by Corollary~\ref{cor-revHDS}~$(ii)$; several other families of abelian groups are also known to contain reversible Hadamard difference sets (see {\rm\cite[Chap.~VI, Sect.~14]{BJL}}), and all these groups are recovered by {\rm \cite[Prop.~3.12]{CP13}}) using elliptic strongly regular bent functions.

\item
The construction of linking systems of reversible Hadamard difference sets in {\rm \cite[Thm.~5.3]{DMP}} depends on the existence of collections of disjoint partial difference sets in abelian $2$-groups satisfying mutual structural properties. After some modification, we can reinterpret this construction as combining a $(2^{(a-1)s},2^s)$ LP-packing in an abelian group of order $2^{2as}$, relative to some subgroup of order~$2^{as}$, with the complement of a $(2^s-1,2^{s-1}-1,2^{s-2}-1)$-difference set in $\Z_{2^s-1}$ {\rm \cite[Chap.~VI, Thm.~1.10]{BJL}} for $s > 1$ and $a > 0$.
\end{enumerate}
\end{remark}

\section{Open Problems}\label{sec9}
In this section, we propose some open problems for future research.

\begin{enumerate}[P1.]

\item
The recursive constructions of LP-packings described in Section~\ref{sec5} rest on two base cases: a $(1,p^s)$ LP-packing in $\Z_p^{2s}$ relative to $\Z_p^s$ derived from a spread (Example~\ref{ex-spread}), and the LP-packings in $p$-groups of odd rank given by~Result~\ref{res-HLX}. Find more base cases, especially in groups of odd rank (when a spread does not exist).

\item
Section~\ref{sec6} combines small examples of NLP-packings with the constructed families of LP-packings to produce families of NLP-packings. Find new small examples of NLP-packings.

\item
All known $(c,t)$ LP-packings with $t > 2$, and all known $(c,t)$ NLP-packings with $t>1$, occur in $p$-groups (see Section~\ref{sec8} for discussion of $(c,2)$ LP-packings and $(c,1)$ NLP-packings.)
Find examples in groups other than $p$-groups.

\item
Nonexistence results for LP-packings and NLP-packings can be obtained from known nonexistence results for single PDSs (see \cite{W}, for example).
We can also obtain nonexistence results for these packings from known nonexistence results for reversible Hadamard difference sets \cite[Chap.~VI, Sect.~14]{BJL}, by applying Lemmas~\ref{lem-LPpackingunion} and~\ref{lem-LPpackingHDS} to $(c,2t)$ LP-packings, and Lemmas~\ref{lem-NLPpackingunion} and~\ref{lem-NLPrevHDS} to $(c,2t-1)$ NLP-packings. Establish further nonexistence results for LP-packings and NLP-packings.

\item
Construct hyperbolic strongly regular bent functions satisfying additional conditions in order to produce LP-packings in new groups (see Remark~\ref{rem-identifyU}).

\item
Are there collections of partial difference sets with advantageous structure, other than LP-packings and NLP-packings, that can be analyzed using methods similar to those of this paper?

\end{enumerate}

\section*{Acknowledgements}
The authors gratefully acknowledge helpful discussions with Samuel Simon at the outset of this research.
We thank Alexander Pott for his hospitality when the first author visited the second at Otto von Guericke University, Magdeburg in Summer 2018 to pursue this research. We thank the anonymous reviewers for their detailed and helpful comments.

\bibliographystyle{amsplain}

\begin{thebibliography}{10}

\bibitem{A}
A.~Aguglia.
\newblock Quasi-{H}ermitian varieties in {$PG(r,q^2)$}, {$q$} even.
\newblock {\em Contrib. Discrete Math.}, 8(1):31--37, 2013. \MR{3118898}

\bibitem{ACK}
A.~Aguglia, A.~Cossidente, and G.~Korchm\'{a}ros.
\newblock On quasi-{H}ermitian varieties.
\newblock {\em J. Combin. Des.}, 20(10):433--447, 2012. \MR{2959282}

\bibitem{BJ}
R.A. Bailey and D.~Jungnickel.
\newblock Translation nets and fixed-point-free group automorphisms.
\newblock {\em J. Combin. Theory Ser. A}, 55(1):1--13, 1990. \MR{1070011}

\bibitem{BKLP}
J.~Bamberg, S.~Kelly, M.~Law, and T.~Penttila.
\newblock Tight sets and {$m$}-ovoids of finite polar spaces.
\newblock {\em J. Combin. Theory Ser. A}, 114(7):1293--1314, 2007. \MR{2353124}

\bibitem{BLMX}
J.~Bamberg, M.~Lee, K.~Momihara, and Q.~Xiang.
\newblock A new infinite family of hemisystems of the {H}ermitian surface.
\newblock {\em Combinatorica}, 38(1):43--66, 2018. \MR{3776347}

\bibitem{BMW}
L.D. Baumert, W.H. Mills, and R.L. Ward.
\newblock Uniform cyclotomy.
\newblock {\em J. Number Theory}, 14(1):67--82, 1982. \MR{0644901}

\bibitem{BJL}
T.~Beth, D.~Jungnickel, and H.~Lenz.
\newblock {\em Design Theory. {V}ol. {I}}, volume~69 of {\em Encyclopedia of
  Mathematics and its Applications}.
\newblock Cambridge University Press, Cambridge, second edition, 1999. \MR{1729456}

\bibitem{BDMR}
J.~De Beule, J.~Demeyer, K.~Metsch, and M.~Rodgers.
\newblock A new family of tight sets in {${\cal Q}^+(5,q)$}.
\newblock {\em Des. Codes Cryptogr.}, 78(3):655--678, 2016. \MR{3459217}

\bibitem{BL}
A.~Blokhuis and M.~Lavrauw.
\newblock On two-intersection sets with respect to hyperplanes in projective
  spaces.
\newblock {\em J. Combin. Theory Ser. A}, 99(2):377--382, 2002. \MR{1917029}

\bibitem{BvM}
A.E. Brouwer and H.~van Maldeghem.
\newblock {\em Strongly regular graphs}.
\newblock \url{https://homepages.cwi.nl/~aeb/math/srg/rk3/srgw.pdf}

\bibitem{BWX}
A.E. Brouwer, R.M. Wilson, and Q.~Xiang.
\newblock Cyclotomy and strongly regular graphs.
\newblock {\em J. Algebr. Comb.}, 10(1):25--28, 1999. \MR{1701281}

\bibitem{B}
B.~De Bruyn.
\newblock On some $2$-tight sets of polar spaces.
\newblock {\em Ars Combin.}, 133:115--131, 2017. \MR{3699834}

\bibitem{CK}
R.~Calderbank and W.M. Kantor.
\newblock The geometry of two-weight codes.
\newblock {\em Bull. London Math. Soc.}, 18(2):97--122, 1986. \MR{0818812}

\bibitem{CGT}
C.~Carlet, G.~Gong, and Y.~Tan.
\newblock Quadratic zero-difference balanced functions, {APN} functions and
  strongly regular graphs.
\newblock {\em Des. Codes Cryptogr.}, 78(3):629--654, 2016. \MR{3459216}

\bibitem{CM}
A.~\c{C}e\c{s}melio\u{g}lu and W.~Meidl.
\newblock Bent and vectorial bent functions, partial difference sets, and
  strongly regular graphs.
\newblock {\em Adv. Math. Commun.}, 12(4):691--705, 2018. \MR{3917637}

\bibitem{CMP}
A.~\c{C}e\c{s}melio\u{g}lu, W.~Meidl, and A.~Pott.
\newblock Generalized {M}aiorana-{M}c{F}arland class and normality of {$p$}-ary
  bent functions.
\newblock {\em Finite Fields Appl.}, 24:105--117, 2013. \MR{3093860}

\bibitem{CTZ}
Y.M. Chee, Y.~Tan, and X.D. Zhang.
\newblock Strongly regular graphs constructed from {$p$}-ary bent functions.
\newblock {\em J. Algebr. Comb.}, 34(2):251--266, 2011. \MR{2811149}

\bibitem{CF13}
Y.Q. Chen and T.~Feng.
\newblock Abelian and non-abelian {P}aley type group schemes.
\newblock {\em Des. Codes Cryptogr.}, 68(1-3):141--154, 2013. \MR{3046342}

\bibitem{CP11}
Y.Q. Chen and J.~Polhill.
\newblock Paley type group schemes and planar {D}embowski-{O}strom polynomials.
\newblock {\em Discrete Math.}, 311(14):1349--1364, 2011. \MR{2795545}

\bibitem{CP13}
Y.Q. Chen and J.~Polhill.
\newblock Partial difference sets and amorphic group schemes from
  pseudo-quadratic bent functions.
\newblock {\em J. Algebr. Comb.}, 37(1):11--26, 2013. \MR{3016299}

\bibitem{CRX}
Y.Q. Chen, D.K. Ray-Chaudhuri, and Q.~Xiang.
\newblock Constructions of partial difference sets and relative difference sets
  using {G}alois rings. {II}.
\newblock {\em J. Combin. Theory Ser. A}, 76(2):179--196, 1996. \MR{1416013}

\bibitem{CHOP}
F.~De Clerck, N.~Hamilton, C.M. O'Keefe, and T.~Penttila.
\newblock Quasi-quadrics and related structures.
\newblock {\em Australas. J. Combin.}, 22:151--166, 2000. \MR{1795332}

\bibitem{C}
A.~Cossidente.
\newblock Combinatorial structures in finite classical polar spaces.
\newblock In {\em Surveys in Combinatorics 2017}, volume 440 of {\em London
  Math. Soc. Lecture Note Ser.}, pages 204--237. Cambridge Univ. Press,
  Cambridge, 2017. \MR{3728108}

\bibitem{CDMPS}
A.~Cossidente, N.~Durante, G.~Marino, T.~Penttila, and A.~Siciliano.
\newblock The geometry of some two-character sets.
\newblock {\em Des. Codes Cryptogr.}, 46(2):231--241, 2008. \MR{2368996}

\bibitem{CoM}
A.~Cossidente and H.~Van Maldeghem.
\newblock The simple exceptional group {$G_2(q),\ q$} even, and two-character
  sets.
\newblock {\em J. Combin. Theory Ser. A}, 114(5):964--969, 2007. \MR{2333144}

\bibitem{CoMa}
A.~Cossidente and G.~Marino.
\newblock Veronese embedding and two-character sets.
\newblock {\em Des. Codes Cryptogr.}, 42(1):103--107, 2007. \MR{2277061}

\bibitem{CoMP}
A.~Cossidente, G.~Marino, and F.~Pavese.
\newblock Projective {P}aley sets.
\newblock {\em J. Combin. Des.}, 27(10):598--613, 2019. \MR{3998644}

\bibitem{CP14}
A.~Cossidente and F.~Pavese.
\newblock Intriguing sets of {${\cal W}(5,q)$}, {$q$} even.
\newblock {\em J. Combin. Theory Ser. A}, 127:303--313, 2014. \MR{3252665}

\bibitem{CP17}
A.~Cossidente and F.~Pavese.
\newblock Intriguing sets of quadrics in {${\rm PG}(5, q)$}.
\newblock {\em Adv. Geom.}, 17(3):339--345, 2017. \MR{3680356}

\bibitem{CP18}
A.~Cossidente and F.~Pavese.
\newblock On intriguing sets of finite symplectic spaces.
\newblock {\em Des. Codes Cryptogr.}, 86(5):1161--1174, 2018. \MR{3779049}

\bibitem{CoPe13}
A.~Cossidente and T.~Penttila.
\newblock Two-character sets arising from gluings of orbits.
\newblock {\em Graphs Combin.}, 29(3):399--406, 2013. \MR{3053589}

\bibitem{D}
J.A. Davis.
\newblock Partial difference sets in $p$-groups.
\newblock {\em Arch. Math.}, 63(2):103--110, 1994. \MR{1289290}

\bibitem{DJ}
J.A. Davis and J.~Jedwab.
\newblock A unifying construction for difference sets.
\newblock {\em J. Combin. Theory Ser. A}, 80(1):13--78, 1997. \MR{1472105}

\bibitem{DJ-nato}
J.A.\ Davis and J.\ Jedwab.
\newblock A unified approach to difference sets with $\gcd(v,n)>1$.
\newblock In A.~Pott et~al., editors, {\em Difference Sets, Sequences and Their
  Correlation Properties}, volume 542 of {\em {NATO} Science Series C}, pages
  85--112. Kluwer Academic Publishers, Dordrecht, 1999. \MR{1735394}

\bibitem{DMP}
J.A. Davis, W.J. Martin, and J.B. Polhill.
\newblock Linking systems in nonelementary abelian groups.
\newblock {\em J. Combin. Theory Ser. A}, 123(1):92--103, 2014. \MR{3157802}

\bibitem{DP}
J.A. Davis and J.~Polhill.
\newblock Difference set constructions of {DRAD}s and association schemes.
\newblock {\em J. Combin. Theory Ser. A}, 117(5):598--605, 2010. \MR{2600978}

\bibitem{DX00}
J.A. Davis and Q.~Xiang.
\newblock A family of partial difference sets with {D}enniston parameters in
  nonelementary abelian 2-groups.
\newblock {\em European J. Combin.}, 21(8):981--988, 2000. \MR{1797679}

\bibitem{DX04}
J.A. Davis and Q.~Xiang.
\newblock Negative {L}atin square type partial difference sets in nonelementary
  abelian $2$-groups.
\newblock {\em J. London Math. Soc. (2)}, 70(1):125--141, 2004. \MR{2064754}

\bibitem{DX06}
J.A. Davis and Q.~Xiang.
\newblock Amorphic association schemes with negative {L}atin square-type
  graphs.
\newblock {\em Finite Fields Appl.}, 12(4):595--612, 2006. \MR{2257085}

\bibitem{Dillon}
J.F. Dillon.
\newblock {\em Elementary {H}adamard difference sets}.
\newblock Ph. {D}. {T}hesis, University of Maryland, 1974. \MR{2624542}

\bibitem{ES}
T.~Etzion and L.~Storme.
\newblock Galois geometries and coding theory.
\newblock {\em Des. Codes Cryptogr.}, 78(1):311--350, 2016. \MR{3440233}

\bibitem{FMX15JCT}
T.~Feng, K.~Momihara, and Q.~Xiang.
\newblock Cameron-{L}iebler line classes with parameter {$x=\frac{q^2-1}{2}$}.
\newblock {\em J. Combin. Theory Ser. A}, 133:307--338, 2015. \MR{3325637}

\bibitem{FMX15C}
T.~Feng, K.~Momihara, and Q.~Xiang.
\newblock Constructions of strongly regular {C}ayley graphs and skew {H}adamard
  difference sets from cyclotomic classes.
\newblock {\em Combinatorica}, 35(4):413--434, 2015. \MR{3386052}

\bibitem{FWXY}
T.~Feng, B.~Wen, Q.~Xiang, and J.~Yin.
\newblock Partial difference sets from quadratic forms and {$p$}-ary weakly
  regular bent functions.
\newblock In {\em Number Theory and Related Areas}, volume~27 of {\em Adv.
  Lect. Math. (ALM)}, pages 25--40. Int. Press, Somerville, MA, 2013. \MR{3185869}

\bibitem{FX}
T.~Feng and Q.~Xiang.
\newblock Strongly regular graphs from unions of cyclotomic classes.
\newblock {\em J. Combin. Theory Ser. B}, 102(4):982--995, 2012. \MR{2927417}

\bibitem{FKM}
G.A. Fern\'{a}ndez-Alcober, R.~Kwashira, and L.~Mart\'{\i}nez.
\newblock Cyclotomy over products of finite fields and combinatorial
  applications.
\newblock {\em European J. Combin.}, 31(6):1520--1538, 2010. \MR{2660401}

\bibitem{HP}
N.~Hamilton and T.~Penttila.
\newblock Sets of type {$(a,b)$} from subgroups of {$\Gamma L(1,p^R)$}.
\newblock {\em J. Algebr. Combin.}, 13(1):67--76, 2001. \MR{1817704}

\bibitem{H00}
X.-D. Hou.
\newblock Bent functions, partial difference sets, and quasi-{F}robenius local
  rings.
\newblock {\em Des. Codes Cryptogr.}, 20(3):251--268, 2000. \MR{1779309}

\bibitem{H02}
X.-D. Hou.
\newblock New partial difference sets in $p$-groups.
\newblock {\em J. Combin. Des.}, 10(6):394--402, 2002. \MR{1931798}

\bibitem{H03}
X.-D. Hou.
\newblock Rings and constructions of partial difference sets.
\newblock {\em Discrete Math.}, 270(1-3):149--176, 2003. \MR{1997895}

\bibitem{HLX}
X.-D. Hou, K.H. Leung, and Q.~Xiang.
\newblock New partial difference sets in {$\mathbb{Z}^t_{p^2}$} and a related
  problem about {G}alois rings.
\newblock {\em Finite Fields Appl.}, 7(1):165--188, 2001. \MR{1803942}

\bibitem{HN}
X.-D. Hou and A.A. Nechaev.
\newblock A construction of finite {F}robenius rings and its application to
  partial difference sets.
\newblock {\em J. Algebra}, 309(1):1--9, 2007. \MR{2301229}

\bibitem{HP73}
D.R. Hughes and F.C. Piper.
\newblock {\em Projective Planes}, volume~6 of {\em Graduate Texts in
  Mathematics}.
\newblock Springer-Verlag, New York-Berlin, 1973. \MR{0333959}

\bibitem{IZZ}
S.~Innamorati, M.~Zannetti, and F.~Zuanni.
\newblock On two character {$(q^7+q^5+q^2+1)$}-sets in {${\rm PG}(4,q^2)$}.
\newblock {\em J. Geom.}, 106(2):287--296, 2015. \MR{3353837}

\bibitem{J89a}
D.~Jungnickel.
\newblock Existence results for translation nets, {II}.
\newblock {\em J. Algebra}, 122(2):288--298, 1989. \MR{0999074}

\bibitem{J89b}
D.~Jungnickel.
\newblock Partial spreads over $\mathbb{Z}_q$.
\newblock {\em Linear Algebra Appl.}, 114:95--102, 1989. \MR{0986867}

\bibitem{LM90}
K.H. Leung and S.L. Ma.
\newblock Constructions of partial difference sets and relative difference sets
  on $p$-groups.
\newblock {\em Bull. Lond. Math. Soc.}, 22(6):533--539, 1990. \MR{1099002}

\bibitem{LM95}
K.H. Leung and S.L. Ma.
\newblock Partial difference sets with {P}aley parameters.
\newblock {\em Bull. London Math. Soc.}, 27(6):553--564, 1995. \MR{1348709}

\bibitem{LM96}
K.H. Leung and S.L. Ma.
\newblock A construction of partial difference sets in $\mathbb{Z}_{p^2} \times
  \mathbb{Z}_{p^2} \times \cdots \times \mathbb{Z}_{p^2}$.
\newblock {\em Des. Codes Cryptogr.}, 8(1-2):167--172, 1996. \MR{1393982}

\bibitem{Ma}
S.L. Ma.
\newblock A survey of partial difference sets.
\newblock {\em Des. Codes Cryptogr.}, 4(3):221--261, 1994. \MR{1277942}

\bibitem{MS}
M.E. Malandro and K.W. Smith.
\newblock Partial difference sets in {$C_{2^n}\times C_{2^n}$}.
\newblock {\em Discrete Math.}, 343(4):111744, 2020. \MR{4045555}

\bibitem{M14}
K.~Momihara.
\newblock Certain strongly regular {C}ayley graphs on {$\mathbb{F}_{2^{2(2s+1)}}$}
  from cyclotomy.
\newblock {\em Finite Fields Appl.}, 25:280--292, 2014. \MR{3130604}

\bibitem{M18}
K.~Momihara.
\newblock Construction of strongly regular {C}ayley graphs based on
  three-valued {G}auss periods.
\newblock {\em European J. Combin.}, 70:232--250, 2018. \MR{3779616}

\bibitem{M21}
K.~Momihara.
\newblock New constructions of strongly regular {C}ayley graphs on abelian non
  {$p$}-groups.
\newblock {\em J. Combin. Theory Ser. A}, 184:Paper No. 105514, 20, 2021. \MR{4293037}

\bibitem{MO}
K.~Momihara and Y.~Okumura.
\newblock New strongly regular decompositions of the complete graphs with prime
  power vertices.
\newblock {\em Finite Fields Appl.}, 36:63--80, 2015. \MR{3396376}

\bibitem{MWX}
K.~Momihara, Q.~Wang, and Q.~Xiang.
\newblock Cyclotomy, difference sets, sequences with low correlation, strongly
  regular graphs, and related geometric substructures.
\newblock In K.-U. Schmidt and A.~Winterhof, editors, {\em Proceedings of the
  workshop ``Pseudo-Randomness and Finite Fields''}, RICAM, Linz, Austria,
  2018.

\bibitem{MX14}
K.~Momihara and Q.~Xiang.
\newblock Lifting constructions of strongly regular {C}ayley graphs.
\newblock {\em Finite Fields Appl.}, 26:86--99, 2014. \MR{3151359}

\bibitem{MX18}
K.~Momihara and Q.~Xiang.
\newblock Strongly regular {C}ayley graphs from partitions of subdifference
  sets of the {S}inger difference sets.
\newblock {\em Finite Fields Appl.}, 50:222--250, 2018. \MR{3759792}

\bibitem{O16}
U.~Ott.
\newblock Families of cyclotomic partial difference sets from a given partial
  difference set.
\newblock \url{researchgate.net/publication/317663576}, 2016.

\bibitem{O16DCC}
U.~Ott.
\newblock On {J}acobi sums, difference sets and partial difference sets in
  {G}alois domains.
\newblock {\em Des. Codes Cryptogr.}, 80(2):241--281, 2016. \MR{3513887}

\bibitem{O16JG}
U.~Ott.
\newblock Some new families of partial difference sets in finite fields.
\newblock {\em J. Geom.}, 107(2):267--278, 2016. \MR{3519948}

\bibitem{OP}
F.~\"{O}zbudak and R.M. Pelen.
\newblock Strongly regular graphs arising from non-weakly regular bent
  functions.
\newblock {\em Cryptogr. Commun.}, 11(6):1297--1306, 2019. \MR{4033176}

\bibitem{P}
F.~Pavese.
\newblock Geometric constructions of two-character sets.
\newblock {\em Discrete Math.}, 338(3):202--208, 2015. \MR{3291885}

\bibitem{P08EJC}
J.~Polhill.
\newblock Generalizations of partial difference sets from cyclotomy to
  nonelementary abelian {$p$}-groups.
\newblock {\em Electron. J. Combin.}, 15:\#R125, 13 pages, 2008. \MR{2443140}

\bibitem{P09JCD}
J.~Polhill.
\newblock Negative {L}atin square type partial difference sets and amorphic
  association schemes with {G}alois rings.
\newblock {\em J. Combin. Des.}, 17(3):266--282, 2009. \MR{2512868}

\bibitem{P09DCC}
J.~Polhill.
\newblock Paley type partial difference sets in non {$p$}-groups.
\newblock {\em Des. Codes Cryptogr.}, 52(2):163--169, 2009. \MR{2496253}

\bibitem{P10}
J.~Polhill.
\newblock Paley partial difference sets in groups of order {$n^4$} and {$9n^4$}
  for any odd {$n>1$}.
\newblock {\em J. Combin. Theory Ser. A}, 117(8):1027--1036, 2010. \MR{2677671}

\bibitem{P19}
J.~Polhill.
\newblock A new family of partial difference sets in 3-groups.
\newblock {\em Des. Codes Cryptogr.}, 87(7):1639--1646, 2019. \MR{3954423}

\bibitem{PDS}
J.~Polhill, J.A. Davis, and K.~Smith.
\newblock A new product construction for partial difference sets.
\newblock {\em Des. Codes Cryptogr.}, 68(1-3):155--161, 2013. \MR{3046343}

\bibitem{P02}
J.B. Polhill.
\newblock Constructions of nested partial difference sets with {G}alois rings.
\newblock {\em Des. Codes Cryptogr.}, 25(3):299--309, 2002. \MR{1900775}

\bibitem{P06}
J.B. Polhill.
\newblock A construction of layered relative difference sets using {G}alois
  rings.
\newblock {\em Ars Combin.}, 78:83--94, 2006. \MR{2194752}

\bibitem{P08DCC}
J.B. Polhill.
\newblock New negative {L}atin square type partial difference sets in
  nonelementary abelian 2-groups and 3-groups.
\newblock {\em Des. Codes Cryptogr.}, 46(3):365--377, 2008. \MR{2372845}

\bibitem{Pott}
A.~Pott.
\newblock {\em Finite Geometry and Character Theory}, volume 1601 of {\em
  Lecture Notes in Mathematics}.
\newblock Springer-Verlag, Berlin, 1995. \MR{1440858}

\bibitem{RX}
D.K. Ray-Chaudhuri and Q.~Xiang.
\newblock Constructions of partial difference sets and relative difference sets
  using {G}alois rings.
\newblock {\em Des. Codes Cryptogr.}, 8(1-2):215--227, 1996. \MR{1393986}

\bibitem{SW}
B.~Schmidt and C.~White.
\newblock All two-weight irreducible cyclic codes?
\newblock {\em Finite Fields Appl.}, 8(1):1--17, 2002. \MR{1872788}

\bibitem{TPF}
Y.~Tan, A.~Pott, and T.~Feng.
\newblock Strongly regular graphs associated with ternary bent functions.
\newblock {\em J. Combin. Theory Ser. A}, 117(6):668--682, 2010. \MR{2645184}

\bibitem{vM}
E.~R. van Dam and M.~Muzychuk.
\newblock Some implications on amorphic association schemes.
\newblock {\em J. Combin. Theory Ser. A}, 117(2):111--127, 2010. \MR{2557883}

\bibitem{W}
Z.~Wang.
\newblock New necessary conditions on (negative) {L}atin square type partial
  difference sets in abelian groups.
\newblock {\em J. Combin. Theory Ser. A}, 172:105208, 10, 2020. \MR{4050735}

\bibitem{WQWZ}
G.~Weng, W.~Qiu, Z.~Wang, and Q.~Xiang.
\newblock Pseudo-{P}aley graphs and skew {H}adamard difference sets from
  presemifields.
\newblock {\em Des. Codes Cryptogr.}, 44(1-3):49--62, 2007. \MR{2336393}

\bibitem{Wie}
H.~Wielandt.
\newblock Zur theorie der einfach transitiven permutationsgruppen.
\newblock {\em Math. Z.}, 40(1):582--587, 1936. \MR{1545582}

\bibitem{XD}
Q.~Xiang and J.A. Davis.
\newblock Constructions of low rank relative difference sets in 2-groups using
  {G}alois rings.
\newblock {\em Finite Fields Appl.}, 6(2):130--145, 2000. \MR{1755762}

\end{thebibliography}
\providecommand{\bysame}{\leavevmode\hbox to3em{\hrulefill}\thinspace}
\providecommand{\MRno}{\relax\ifhmode\unskip\space\fi MR }
\providecommand{\MRhref}[2]{%
  \href{http://www.ams.org/mathscinet-getitem?mr=#1}{#2}
}
\providecommand{\href}[2]{#2}
\providecommand{\MR}[1]{\MRhref{#1}{\MRno{#1}}}

\end{document}